\documentclass[a4paper,twoside]{article}
\usepackage{a4}
\usepackage{amssymb}
\usepackage{amsmath}
\usepackage{upref}
\usepackage[active]{srcltx}
\usepackage[pagebackref,colorlinks,citecolor=blue,linkcolor=blue]{hyperref}
\usepackage[dvipsnames]{color}
\allowdisplaybreaks[2] 
%
%
%
\newcount\minutes \newcount\hours
\hours=\time
\divide\hours 60
\minutes=\hours
\multiply\minutes -60
\advance\minutes \time
\newcommand{\klockan}{\the\hours:{\ifnum\minutes<10 0\fi}\the\minutes}
\newcommand{\tid}{\today\ \klockan}
\newcommand{\prtid}{\smash{\raise 10mm \hbox{\LaTeX ed \tid}}}
\renewcommand{\prtid}{}
%
%
\makeatletter
\pagestyle{headings}
\headheight 10pt
\def\sectionmark#1{} 
\def\subsectionmark#1{}
\newcommand{\sectnr}{\ifnum \c@secnumdepth >\z@
                 \thesection.\hskip 1em\relax \fi}
\def\@evenhead{\footnotesize\rm\thepage\hfil\leftmark\hfil\llap{\prtid}}
\def\@oddhead{\footnotesize\rm\rlap{\prtid}\hfil\rightmark\hfil\thepage}
\def\tableofcontents{\section*{Contents} 
 \@starttoc{toc}}
\makeatother
%
%
\makeatletter
\def\@biblabel#1{#1.}
\makeatother
%
%
%
\makeatletter
\let\Thebibliography=\thebibliography
\renewcommand{\thebibliography}[1]{\def\@mkboth##1##2{}\Thebibliography{#1}
\addcontentsline{toc}{section}{References}
\frenchspacing 
\setlength{\@topsep}{0pt}
\setlength{\itemsep}{0pt}%
\setlength{\parskip}{0pt plus 2pt}%
}
\makeatother
%
%
\makeatletter
\def\mdots@{\mathinner.\nonscript\!.%
 \ifx\next,.\else\ifx\next;.\else\ifx\next..\else
 \nonscript\!\mathinner.\fi\fi\fi}
\let\ldots\mdots@
\let\cdots\mdots@
\let\dotso\mdots@
\let\dotsb\mdots@
\let\dotsm\mdots@
\let\dotsc\mdots@
\def\vdots{\vbox{\baselineskip2.8\p@ \lineskiplimit\z@
    \kern6\p@\hbox{.}\hbox{.}\hbox{.}\kern3\p@}}
\def\ddots{\mathinner{\mkern1mu\raise8.6\p@\vbox{\kern7\p@\hbox{.}}%
    \raise5.8\p@\hbox{.}\raise3\p@\hbox{.}\mkern1mu}}
\makeatother
%
%
\makeatletter
\let\Enumerate=\enumerate
\renewcommand{\enumerate}{\Enumerate%
\setlength{\@topsep}{0pt}
\setlength{\itemsep}{0pt}%
\setlength{\parskip}{0pt plus 1pt}%
\renewcommand{\theenumi}{\textup{(\alph{enumi})}}%
\renewcommand{\labelenumi}{\theenumi}%
}
\let\endEnumerate=\endenumerate
\renewcommand{\endenumerate}{\endEnumerate\unskip}
\makeatother
%
%
\makeatletter
\def\@seccntformat#1{\csname the#1\endcsname.\quad}
\makeatother
%
%
\newcommand{\authortitle}[2]{\author{#1}\title{#2}\markboth{#1}{#2}}
%
%
\newcommand{\auth}[2]{{#1, #2.}}
\newcommand{\art}[6]{{\sc #1, \rm #2, \it #3\/ \bf #4 \rm (#5), \mbox{#6}.}}
\newcommand{\artprep}[3]{{\sc #1, \rm #2, \rm #3.}}
\newcommand{\artin}[3]{{\sc #1, \rm #2,  in #3.}}
\newcommand{\arttoappear}[3]{{\sc #1, \rm #2, to appear in \it #3}}
\newcommand{\book}[3]{{\sc #1, \it #2, \rm #3.}}
\newcommand{\AND}{{\rm and }}
%
%
\RequirePackage{amsthm}
\newtheoremstyle{descriptive}%
  {\topsep}   
  {\topsep}   
  {\rmfamily} 
  {}          
  {\bfseries} 
  {.}         
  { }         
  {}          
\newtheoremstyle{propositional}%
  {\topsep}   
  {\topsep}   
  {\itshape}  
  {}          
  {\bfseries} 
  {.}         
  { }         
  {}          
\theoremstyle{propositional}
\newtheorem{thm}{Theorem}[section]
\newtheorem{prop}[thm]{Proposition}
\newtheorem{lem}[thm]{Lemma}
\theoremstyle{descriptive}
\newtheorem{deff}[thm]{Definition}
\newtheorem{example}[thm]{Example}
\newtheorem{remark}[thm]{Remark}
%
%
%
%
%
\makeatletter
\renewenvironment{proof}[1][\proofname]{\par
  \pushQED{\qed}%
  \normalfont
  \trivlist
  \item[\hskip\labelsep
        \itshape
    #1\@addpunct{.}]\ignorespaces
}{%
  \popQED\endtrivlist\@endpefalse
}
\makeatother
%
%
\hyphenation{Shan-mu-ga-lin-gam Hei-no-nen
Kin-nu-nen Lip-schitz non-smooth non-empty non-zero
Rie-mann-ian}
%
%
\newcommand{\setm}{\setminus}
\renewcommand{\emptyset}{\varnothing}
%
%
%
%
%
\def\vint{\mathop{\mathchoice%
          {\setbox0\hbox{$\displaystyle\intop$}\kern 0.22\wd0%
           \vcenter{\hrule width 0.6\wd0}\kern -0.82\wd0}%
          {\setbox0\hbox{$\textstyle\intop$}\kern 0.2\wd0%
           \vcenter{\hrule width 0.6\wd0}\kern -0.8\wd0}%
          {\setbox0\hbox{$\scriptstyle\intop$}\kern 0.2\wd0%
           \vcenter{\hrule width 0.6\wd0}\kern -0.8\wd0}%
          {\setbox0\hbox{$\scriptscriptstyle\intop$}\kern 0.2\wd0%
           \vcenter{\hrule width 0.6\wd0}\kern -0.8\wd0}}%
          \mathopen{}\int}
%
%
\newcommand{\Cp}{{C_p}}
\newcommand{\CpOm}{{C_p^\Om}}
\newcommand{\din}{d_{\rm in}}
\newcommand{\Bin}{B_{\rm in}}

\newcommand{\Bij}{B^{i,j}}
\newcommand{\Bhij}{\widehat{B}^{i,j}}
\newcommand{\Bhi}{\widehat{B}^{i}}
\newcommand{\Bhx}{\widehat{B}_{x'}}
\newcommand{\Bhxk}{\widehat{B}_{x'_k}}

\newcommand{\xij}{x_{i,j}}
\newcommand{\tij}{t_{i,j}}
\DeclareMathOperator{\sgn}{sgn}
\DeclareMathOperator{\Lip}{Lip}
\newcommand{\Lipc}{{\Lip_c}}
\DeclareMathOperator{\dist}{dist}
\DeclareMathOperator{\diam}{diam}
\DeclareMathOperator{\length}{length}
\DeclareMathOperator{\supp}{supp}
\DeclareMathOperator{\interior}{int}
\DeclareMathOperator*{\essliminf}{ess\,lim\,inf}
\newcommand{\loc}{_{\rm loc}}
{\catcode`p =12 \catcode`t =12 \gdef\eeaa#1pt{#1}}      
\def\accentadjtext#1{\setbox0\hbox{$#1$}\kern   
                \expandafter\eeaa\the\fontdimen1\textfont1 \ht0 }
\def\accentadjscript#1{\setbox0\hbox{$#1$}\kern 
                \expandafter\eeaa\the\fontdimen1\scriptfont1 \ht0 }
\def\accentadjscriptscript#1{\setbox0\hbox{$#1$}\kern   
                \expandafter\eeaa\the\fontdimen1\scriptscriptfont1 \ht0 }
\def\accentadjtextback#1{\setbox0\hbox{$#1$}\kern       
                -\expandafter\eeaa\the\fontdimen1\textfont1 \ht0 }
\def\accentadjscriptback#1{\setbox0\hbox{$#1$}\kern     
                -\expandafter\eeaa\the\fontdimen1\scriptfont1 \ht0 }
\def\accentadjscriptscriptback#1{\setbox0\hbox{$#1$}\kern 
                -\expandafter\eeaa\the\fontdimen1\scriptscriptfont1 \ht0 }
\def\itoverline#1{{\mathsurround0pt\mathchoice
        {\rlap{$\accentadjtext{\displaystyle #1}
                \accentadjtext{\vrule height1.593pt}
                \overline{\phantom{\displaystyle #1}
                \accentadjtextback{\displaystyle #1}}$}{#1}}
        {\rlap{$\accentadjtext{\textstyle #1}
                \accentadjtext{\vrule height1.593pt}
                \overline{\phantom{\textstyle #1}
                \accentadjtextback{\textstyle #1}}$}{#1}}
        {\rlap{$\accentadjscript{\scriptstyle #1}
                \accentadjscript{\vrule height1.593pt}
                \overline{\phantom{\scriptstyle #1}
                \accentadjscriptback{\scriptstyle #1}}$}{#1}}
        {\rlap{$\accentadjscriptscript{\scriptscriptstyle #1}
                \accentadjscriptscript{\vrule height1.593pt}
                \overline{\phantom{\scriptscriptstyle #1}
                \accentadjscriptscriptback{\scriptscriptstyle #1}}$}{#1}}}}
%
%
\newcommand{\al}{\alpha}
\newcommand{\alp}{\alpha}
\newcommand{\be}{\beta}
\newcommand{\ga}{\gamma}
\newcommand{\de}{\delta}
\newcommand{\eps}{\varepsilon}
\newcommand{\la}{\lambda}
\newcommand{\La}{\Lambda}
\newcommand{\Om}{\Omega}
\renewcommand{\phi}{\varphi}
\newcommand{\p}{{$p\mspace{1mu}$}}
\newcommand{\R}{\mathbf{R}}
\newcommand{\eR}{{\overline{\R}}}
%
%
%
%
%
%
%

%
%
\newcommand{\Np}{N^{1,p}}
\newcommand{\Nq}{N^{1,q}}
\newcommand{\Nploc}{N^{1,p}\loc}
\newcommand{\Nqloc}{N^{1,q}\loc}
\newcommand{\hNp}{\widehat{N}^{1,p}}
\newcommand{\Lploc}{L^p\loc}
\newcommand{\Lp}{L^p}
\newcommand{\rp}{r'}
\newcommand{\Ga}{\Gamma}
\newcommand{\shat}{{\hat{s}}}
\newcommand{\xh}{{\hat{x}}}
\newcommand{\qhat}{{\hat{q}}}
\newcommand{\ut}{{\tilde{u}}}
\newcommand{\ft}{\tilde{f}}
\newcommand{\clBzeroprime}{{\itoverline{B_0'}}}
\newcommand{\CPI}{C_{\rm PI}}
\newcommand{\ub}{\bar{u}}

\newcommand{\Etau}{E_\tau}
%
%
\numberwithin{equation}{section}
\newcommand{\imp}{\mathchoice{\quad \Longrightarrow \quad}{\Rightarrow}
                {\Rightarrow}{\Rightarrow}}
\newenvironment{ack}{\medskip{\it Acknowledgement.}}{}

\begin{document}

\authortitle{Anders Bj\"orn and Jana Bj\"orn}
{Local and semilocal Poincar\'e inequalities
on metric spaces}

\author{
Anders Bj\"orn \\
\it\small Department of Mathematics, Link\"oping University, \\
\it\small SE-581 83 Link\"oping, Sweden\/{\rm ;}
\it \small anders.bjorn@liu.se
\\
\\
Jana Bj\"orn \\
\it\small Department of Mathematics, Link\"oping University, \\
\it\small SE-581 83 Link\"oping, Sweden\/{\rm ;}
\it \small jana.bjorn@liu.se
}

\date{To appear in \emph{J. Math. Pures Appl.}}

\maketitle

\noindent{\small {\bf Abstract}. 
We consider several local versions of the doubling condition and Poincar\'e
inequalities on metric measure spaces. 
Our first result is that in proper connected spaces, the weakest local
assumptions self-improve to semilocal ones, i.e.\ holding within every ball.

We then study various geometrical and analytical consequences of such
local assumptions, such as local quasiconvexity, self-improvement of
Poincar\'e inequalities, existence of Lebesgue points, density of 
Lipschitz functions and quasicontinuity of Sobolev functions.
It turns out that local versions of these properties hold under local
assumptions, even though they are not always straightforward.

We also conclude that many qualitative, as well as quantitative, properties
of \p-harmonic functions on metric spaces can be proved in various forms
under such local assumptions, with the main exception being the Liouville
theorem, which fails without global assumptions.

\medskip

\noindent{\bf R\'esum\'e.}
Nous consid\'erons plusieurs versions locales des conditions de
doublement et des in\'egalit\'es de Poincar\'e dans des espaces
m\'etriques mesur\'es. Notre premier r\'esultat stipule que dans un espace
propre connexe, les hypoth\`eses locales les plus faibles
s'am\'eliorent en semi-locales, c.\`a.d. elles sont valables dans
chaque boule. 
	
Nous \'etudions ensuite certaines cons\'equences g\'eom\'etriques et
analytiques de telles hypoth\`eses locales tel que la
quasi-convexit\'e locale, l'auto am\'elioration des in\'egalit\'es de
Poincar\'e, l'existence des points Lebesgue, la densit\'e des
fonctions Lipschitz et la quasi-continuit\'e des fonctions Sobolev. Il
s'av\`ere que les versions locales de ces propri\'et\'es restent
valables sous les hypoth\`eses locales m\^eme si elles ne sont pas
toujours imm\'ediates.  
	
Nous concluons \'egalement que sous telles hypoth\'eses locales,
plusieurs propri\'et\'es qualitatives, ainsi que quantitatives, des
fonctions p-harmoniques sur des espaces m\'etriques peuvent \^etre
prouv\'ees sous diverses formes, l'exception principale \'etant le
th\'eor\`eme de Liouville qui \'echoue sans hypoth\'eses globales. 

\bigskip
\noindent
{\small \emph{Key words and phrases}:
capacity,
density of Lipschitz functions,
Lebesgue point,
local doubling,
metric measure space, 
Newtonian space,
nonlinear potential theory, 
\p-harmonic function,
Poincar\'e inequality,
quasicontinuity,
quasiminimizer,
semilocal doubling,
Sobolev space.
}

\medskip
\noindent
{\small Mathematics Subject Classification (2010):
Primary: 31E05; Secondary: 30L99, 31C45,  35J60, 46E35
}
}
\section{Introduction}

In the last two decades,
an extensive part of first-order analysis, 
such as the
Sobolev space theory and 
nonlinear potential theory for \p-harmonic functions, 
has been developed on metric spaces.
Standard assumptions are very often that $\mu$ is a 
doubling measure
supporting a 
\p-Poincar\'e inequality and that the space $X$ is complete.
The doubling condition controls changes in scales, while the Poincar\'e inequality
guarantees that functions are controlled by their so-called upper gradients.
Both of these conditions play a vital role in many proofs.
These assumptions are usually imposed globally on the whole space.

In this paper we study how these conditions
can be relaxed and replaced by
similar local or semilocal assumptions, while retaining most of the important 
consequences.
We assume throughout the paper that $1 \le  p<\infty$
and that $X=(X,d,\mu)$ is a metric space equipped
with a metric $d$ and a positive complete  Borel  measure $\mu$
such that $0<\mu(B)<\infty$ for all  balls $B \subset X$.

\begin{deff} 
The measure $\mu$ is \emph{locally doubling}  if 
for every $x \in X$ there are $r,C>0$ 
(depending on $x$) such that 
$\mu(2B)\le C \mu(B)$ 
for all balls $B \subset B(x,r)$.
If such a $C$ exists for all $x \in X$ and all $r>0$, then 
$\mu$ is \emph{semilocally doubling}.
(Here and below, $\la B$ stands for the ball concentric with $B$
and with $\la$-times the radius.)

The measure $\mu$ supports a \emph{local \p-Poincar\'e inequality}, 
$p \ge1$,
if
for every $x \in X$ there are $r,C>0$ and $\lambda \ge 1$
such that for all balls $B\subset B(x,r)$, 
all integrable functions $u$ on $\la B$
and all upper gradients $g$ of $u$, 
\begin{equation}   \label{eq-def-local-PI-intro}
        \vint_{B} |u-u_B| \,d\mu
        \le C r_B \biggl( \vint_{\lambda B} g^{p} \,d\mu \biggr)^{1/p},
\end{equation}
where $ u_B   :=\vint_B u \,d\mu := \int_B u\, d\mu/\mu(B)$.
If such $C$ and $\la$ exist for all $x$ and all $r$, then 
$X$ supports a \emph{semilocal \p-Poincar\'e inequality}.
\end{deff}

It may be worth comparing this  with the definition of local 
function spaces. 
A function $f \in L^1\loc(X)$ if for every $x \in X$ there is $r>0$
such $f \in L^1(B(x,r))$.
If $X$ is proper  (i.e.\ all closed bounded sets are compact), 
then it is a well-known (and 
useful) fact that this is equivalent to requiring that
$f \in L^1\loc(B)$ for every ball $B$ in $X$.
It turns out that a similar fact is true for the doubling property.
Note that if $\mu$ is globally doubling, then $X$ is   proper
if and only if it is complete.

\begin{prop} \label{prop-semilocal-doubling-intro}
If $X$ is proper and $\mu$ is locally doubling, 
then $\mu$ is semilocally doubling.
\end{prop}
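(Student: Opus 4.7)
The plan is to use the compactness of $K := \overline{B(x,r)}$ (granted by properness) to upgrade local doubling to a uniform statement on small balls, and then to handle larger balls via separate lower and upper bounds on $\mu(B)$ and $\mu(2B)$.

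First, extract a finite subcover $\{B(z_i, r_{z_i}/2)\}_{i=1}^N$ of the compact set $K$ from the open cover $\{B(z, r_z/2) : z \in K\}$, where $r_z, C_z$ come from local doubling at $z$, and set $r_0 := \min_i r_{z_i}/2$ and $C_0 := \max_i C_{z_i}$. For any ball $B = B(y,s) \subset B(x,r)$ with $s \le r_0$, the center $y \in K$ lies in some $B(z_i, r_{z_i}/2)$, hence $B \subset B(z_i, r_{z_i})$ and local doubling at $z_i$ gives $\mu(2B) \le C_0 \mu(B)$.

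For $s > r_0$ the aim is to bound $\mu(B)$ below and $\mu(2B)$ above by quantities depending only on $x$ and $r$. A standard compactness argument shows $\inf_{y \in K} \mu(B(y, r_0)) =: m > 0$: otherwise a minimizing sequence $y_n \to y^* \in K$ would eventually satisfy $B(y^*, r_0/2) \subset B(y_n, r_0)$, contradicting $\mu(B(y_n, r_0)) \to 0$; hence $\mu(B) \ge \mu(B(y, r_0)) \ge m$. For $\mu(2B)$ I would split on $s$: if $s \le 2r$ then $2B \subset B(x, 5r)$; if $s > 2r$ the chain $B(x,r) \subset B(y, 2r) \subset B(y, s) \subset B(x,r)$ forces $B(y, s) = B(x, r)$, whence $s \le \dist(y, X \setm B(x,r)) \le r + \dist(x, X \setm B(x,r)) =: R_0 < \infty$ by properness (the degenerate case $X = B(x, r)$ is trivial since then $2B \subset X$). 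In either subcase $\mu(2B) \le \mu(B(x, \max(5r, r+2R_0)))$, and combining with the lower bound yields a uniform constant $C$ depending only on $x$ and $r$.

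The main difficulty lies in the "oversized" regime $s > 2r$: the naive inclusion $2B \subset B(x, r+2s)$ gives no finite bound because $s$ is a priori unbounded by $r$. The decisive observation is that the constraint $B(y, s) \subset B(x, r)$ together with properness implicitly bounds $s$ via $\dist(y, X \setm B(x, r))$, which is uniformly finite over $y \in K$; without this the large-radius case would not close.
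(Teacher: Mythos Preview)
Your proof is correct and follows the same overall strategy as the paper's: handle small balls via a finite subcover where local doubling applies directly, and handle large balls by bounding $\mu(B)$ from below and $\mu(2B)$ from above uniformly. The main technical difference is that the paper first normalizes the ball $B(x,r)$ by enlarging $r$ until either $r = \dist(x, X \setminus B(x,r))$ or $B(x,r) = X$; this forces $s \le 2r$ for every $B(y,s) \subset B(x,r)$ and eliminates your ``oversized'' case $s > 2r$ entirely, yielding $2B \subset B(x,5r)$ at once. Your route instead bounds $s$ via $\dist(y, X \setminus B(x,r))$ in that case, which works but is a bit more laborious. (A minor point: the finiteness of $R_0$ there follows simply from $X \setminus B(x,r) \ne \emptyset$, not from properness; properness is used only to make $K$ compact.) For the lower bound on $\mu(B)$, the paper uses a second finite cover of $K$ by balls of radius $r_0/2$ rather than your sequential-compactness argument, but both are standard and equivalent in effect.
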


This is perhaps not very surprising, and essentially just needs a 
standard compactness
argument to be shown. That a similar result is true also for Poincar\'e inequalities
is far less obvious, and requires several pages to prove. 
Poincar\'e inequalities are intimately related to
connectivity properties,
and thus the precise statement is as follows.
The assumptions of properness and connectedness cannot be dropped.

\begin{thm} \label{thm-PI-intro}
If $X$ is proper and connected and $\mu$ is locally doubling
and supports a local \p-Poincar\'e inequality, then 
it supports a semilocal \p-Poincar\'e inequality.
\end{thm}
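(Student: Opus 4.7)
By Proposition~\ref{prop-semilocal-doubling-intro}, $\mu$ is already semilocally doubling, so doubling constants on any given ball are under control. Fix $x_0\in X$ and $R>0$, and set $K:=\itoverline{B(x_0,10R)}$, which is compact by properness. The goal is to produce $C$ and $\La$, depending only on $(x_0,R)$, such that \eqref{eq-def-local-PI-intro} holds on every ball $B\subset B(x_0,R)$. A first standard step is to extract uniform local data on $K$: at each $z\in K$ the local \p-Poincar\'e inequality supplies parameters $(r_z,C_z,\la_z)$; a finite subcover of $K$ by balls $B(z,r_z/2)$, together with minimizing the radii and maximizing the remaining constants, yields $r_0>0$, $C_0<\infty$ and $\la_0\ge1$ such that \eqref{eq-def-local-PI-intro} holds with these constants on every ball $B\subset K$ with $r_B\le r_0$. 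For $B\subset B(x_0,R)$ with $r_B\le r_0/(2\la_0)$ this already gives \eqref{eq-def-local-PI-intro} directly.

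For larger balls I would use connectedness to chain pairs of points. The key lemma to prove is the following: there exist $\La_0\ge1$ and $N_0\in\mathbf{N}$, depending only on $(x_0,R)$, such that any two points $y,z$ in a ball $B\subset B(x_0,R)$ with $r_B>r_0/(2\la_0)$ admit a chain $B_1,\dots,B_M$ of balls of radius $r_0/16$, all contained in $\La_0 B$, with $y\in B_1$, $z\in B_M$, $B_i\cap B_{i+1}\ne\varnothing$ and $M\le N_0$. Existence of \emph{some} chain inside $K$ follows from connectedness: the set of points reachable from $y$ by $(r_0/16)$-chains inside $K$ is simultaneously open and closed in $K$. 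The length bound $M\le N_0$ comes from a packing argument: the shrunken balls $\tfrac15 B_i$ may be arranged to be pairwise disjoint in $K$, so
\[
M\le\frac{\mu(K)}{\inf_{z\in K}\mu\bigl(B(z,r_0/80)\bigr)}=:N_0,
\]
and the infimum is positive by semilocal doubling on the compact set $K$.

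With the chain in place, apply the standard Hajlasz--Koskela telescoping. At Lebesgue points $y,z$ of $u$, iterating the local \p-Poincar\'e inequality from Step~1 across consecutive overlapping pairs $B_i,B_{i+1}$ (both of which sit inside a single ball in $K$ of radius $\le r_0$, so Step~1 applies) gives
\[
|u(y)-u(z)|\le C'\sum_{i=1}^M r_0\biggl(\vint_{\la_0 B_i}g^p\,d\mu\biggr)^{1/p}.
\]
Integrating this over $B\times B$, using semilocal doubling to compare $\mu(B)$ with $\mu(B_i)$ and the uniform bound $M\le N_0$ together with the bounded overlap of $\{\la_0 B_i\}$ to sum the right-hand side, yields \eqref{eq-def-local-PI-intro} on $B$ with final constants depending only on $C_0,\la_0,N_0,\La_0$ and the semilocal doubling constant on $K$.

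The routine compactness extraction of Step~1 and the telescoping of Step~3 are standard; the genuine difficulty is the quantitative chain lemma of Step~2, and in particular forcing the chain to remain inside a fixed enlargement $\La_0 B$ rather than drifting across the whole compact set $K$. Connectedness alone only delivers qualitative chain-connectivity, so one has to combine it with the uniform lower scale $r_0$ and the packing estimate furnished by semilocal doubling to upgrade the chain to a quantitatively controlled one. Without both properness and connectedness, neither the existence of such chains nor the length bound survives, which is precisely the reason the hypotheses in the statement cannot be removed.
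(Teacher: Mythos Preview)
Your overall architecture---uniform small-scale data by compactness, chaining for large balls---matches the paper's, and Step~1 is exactly the paper's Lemma~\ref{lem-PI-small-r-OK}. The substantive gap is in Step~2.

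Your claim that ``existence of some chain inside $K$ follows from connectedness: the set of points reachable from $y$ by $(r_0/16)$-chains inside $K$ is simultaneously open and closed in $K$'' only yields the desired conclusion if $K=\itoverline{B(x_0,10R)}$ is itself connected, and it need not be. A concrete counterexample under the theorem's hypotheses: let $X$ be the ``hairpin''
\[
X=(\{0\}\times[0,100])\cup([0,1]\times\{100\})\cup(\{1\}\times[0,100])\subset\R^2
\]
with the Euclidean metric and $1$-dimensional Hausdorff measure. This $X$ is compact, connected, and satisfies local doubling and a local $1$-Poincar\'e inequality. With $x_0=(0,0)$ and $R=2$, the ball $B(x_0,R)$ already contains points on both legs (e.g.\ $(0,1)$ and $(1,1)$), while $K=\itoverline{B(x_0,20)}$ still misses the top of the hairpin and is disconnected. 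No chain inside $K$ joins the two legs. The fix is not to enlarge the factor $10$ to some other absolute constant; the correct enlargement depends on a quasiconvexity constant that you have not yet produced.

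This is precisely the missing ingredient. The paper first proves (Lemma~\ref{lem-ex-curve-in-B0} via a Semmes chaining argument, and then Proposition~\ref{prop-ex-curve-in-B0}) that the local \p-Poincar\'e inequality itself forces \emph{semilocal quasiconvexity}: any two points in $\itoverline{B}_0$ are joined by a curve of length $\le L\,d(x,y)$, with $L$ depending on $B_0$. Only then does the paper place the chain of balls along such a curve inside $\sigma B_0$ for the right $\sigma$, and bootstrap the Poincar\'e inequality along the chain (using the elementary Lemma~\ref{lem-iteration-with-Q} rather than pointwise telescoping and integration over $B\times B$). Your final paragraph correctly identifies the difficulty but the suggested resolution---``combine connectedness with the uniform lower scale $r_0$ and packing''---does not work: the packing bound on $M$ presupposes that the chain already lives in a fixed compact set, which is exactly what is at issue. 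You need the quantitative connectivity coming from the Poincar\'e inequality, not merely topological connectedness of $X$.
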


As already mentioned, in much
of the metric space literature  on first-order analysis
it is assumed  that
$\mu$ is globally doubling and supports a global \p-Poincar\'e inequality,
while it is folklore that much of the theory holds under local assumptions.
For instance, the assumptions are global in the monographs
Haj\l asz--Koskela~\cite{HaKo}, Bj\"orn--Bj\"orn~\cite{BBbook} and
Heinonen--Koskela--Shanmugalingam--Tyson~\cite{HKSTbook}.
In \cite{HaKo} and \cite{BBbook} it is mentioned
that Riemannian manifolds with Ricci curvature bounded from below
support
semilocal assumptions  
with the implicit implication that most of the
developed theory holds under these weaker assumptions.

There are some papers requiring local assumptions, but they often take
different forms from paper to paper.
Sometimes
it is assumed that the constants involved are uniform, something
we do not assume (but for 
Section~\ref{sect-local-unif}). 
Such 
assumptions (of different kinds)
are e.g.\ assumed in Cheeger~\cite{Cheeg},
Danielli--Garofalo--Marola~\cite{DaGaMa}, Garofalo--Marola~\cite{GaMa} 
and Holopainen--Shanmugalingam~\cite{HoSh}. 
The requirements are in all cases more restrictive
than our local assumptions,
and those in \cite{Cheeg} are more restrictive
than our semilocal assumptions. 

Once we have established 
Proposition~\ref{prop-semilocal-doubling-intro}
and Theorem~\ref{thm-PI-intro} (which 
we do in Sections~\ref{sect-doubling} and~\ref{sect-PI}),
we take a look at which useful
consequences of global doubling and Poincar\'e inequalities
can be obtained already under (semi)local assumptions,
with and without properness.
We study the self-improvement of
Poincar\'e inequalities, Lebesgue points, density of 
Lipschitz and locally Lipschitz functions, quasicontinuity and 
\p-harmonic functions under (semi)local assumptions.

In Section~\ref{sect-better-PI} we concentrate on 
the self-improvement of Poincar\'e inequalities and prove two 
results: one improving the norm on the left-hand side
of  \eqref{eq-def-local-PI-intro} and the other one
the norm on the right-hand
side.
In Section~\ref{sect-local-unif} we see how 
uniformly local assumptions
give slightly stronger self-improvement conclusions.
Under global assumptions these important results
are due to  Haj\l asz--Koskela~\cite{HaKo-CR}, 
\cite{HaKo} and Keith--Zhong~\cite{KZ}, respectively.
In particular, the latter result can be localized in the following
way.

\begin{thm} \label{thm-PI-uniform-q-intro}
If $X$ is locally compact and $\mu$ is locally 
doubling and supports a local \p-Poincar\'e inequality,
both with uniform constants $C$ and $\la$ and with $p>1$,
then $X$ supports a local $q$-Poincar\'e inequality
for some $q<p$ with new uniform constants $C$ and $\la$.
\end{thm}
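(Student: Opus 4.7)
The plan is to localize the Keith--Zhong self-improvement theorem from~\cite{KZ}, which gives the analogous conclusion under global doubling and a global $p$-Poincar\'e inequality. To do this, I would first exploit local compactness together with the uniformity of the constants $C$ and $\la$ in the hypotheses to pass from pointwise-local to uniformly scaled local assumptions on a compact neighborhood of each base point. Concretely, fix $x_0 \in X$ and choose $R>0$ so that $\overline{B(x_0,R)}$ is compact. For each $y$ in this compact set there is some $r(y)>0$ on which the local doubling condition and the local $p$-Poincar\'e inequality hold with the same uniform $C$ and $\la$. A standard Lebesgue-number / finite-covering argument on $\overline{B(x_0,R)}$ then yields a single radius $r^*>0$ such that doubling and the $p$-Poincar\'e inequality hold, with the same uniform constants, on every ball $B\subset B(y,r^*)$ for every $y\in\overline{B(x_0,R)}$.

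Once this uniform scale is established, the second step is to run the Keith--Zhong argument on any ball $B\subset B(x_0,r^*/N)$, where $N$ is a sufficiently large multiple of $\la$. The essential feature of the Keith--Zhong proof is that to derive the $q$-Poincar\'e inequality on a given ball $B$ one only uses doubling and the $p$-Poincar\'e inequality on moderate dilations $c\la B'$ of sub-balls $B'$ that are comparable in size and located close to $B$ (via fractional maximal function estimates and a telescoping chain decomposition). If $N$ is chosen large enough, every such auxiliary ball lies inside $B(y,r^*)$ for some $y\in\overline{B(x_0,R)}$, so the uniform doubling and $p$-Poincar\'e inequalities apply at every step of the argument. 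This yields a $q$-Poincar\'e inequality on $B$ with new constants $C'$ and $\la'$ depending only on $p$, $C$, $\la$, and the doubling constant. Since none of these quantities depend on $x_0$, the resulting assumptions are uniformly local.

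The principal obstacle will be the careful verification that every auxiliary ball invoked in the Keith--Zhong argument genuinely fits inside the uniform local neighborhood $B(y,r^*)$; while the proof is morally local in flavor, this needs to be checked explicitly and may require some quantitative tightening of the covering-type bookkeeping. An alternative route, which would sidestep this entirely, is to construct a ``local extension'' of the metric measure space on which doubling and the $p$-Poincar\'e inequality hold globally and which agrees with $(X,d,\mu)$ on $B(x_0,r^*/N)$, thereby allowing one to invoke~\cite{KZ} as a black box; the uniform local hypotheses on the compact neighborhood $\overline{B(x_0,R)}$ give exactly the data needed to perform such an extension.
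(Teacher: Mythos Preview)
Your approach is essentially the same as the paper's: localize the Keith--Zhong argument and observe that, because the output constants $q$, $C'$, $\la'$ depend only on $p$ and the input doubling and Poincar\'e constants, uniform inputs yield uniform outputs. The paper packages this via its Theorem~\ref{thm-loc-cpt-q-PI} (which in turn rests on the localized Keith--Zhong result Theorem~\ref{thm-KZ-proper}); the proof of Theorem~\ref{thm-PI-uniform-q-intro} is then a one-liner invoking that theorem and the explicit dependence of $p-q$ on the constants.

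Two remarks are worth making. First, your Lebesgue-number / covering step is superfluous: the local assumption already hands you, for each $x_0$, a single ball $B(x_0,r_0)$ on which doubling and the $p$-Poincar\'e inequality hold with the uniform constants, and local compactness lets you shrink $r_0$ so that the closure is compact. No covering of $\overline{B(x_0,R)}$ is needed, and the radius of the output ball is allowed to depend on $x_0$ (only the constants must be uniform). Second, you gloss over a genuine technical point: the Keith--Zhong proof in \cite{KZ} and \cite{HKSTbook} is written for \emph{geodesic} spaces, and a general locally compact space with local doubling and Poincar\'e is not geodesic. The paper handles this by first using Lemma~\ref{lem-ex-curve-in-B0} to obtain local quasiconvexity (with constant depending only on $C_\mu$ and $\CPI$), passing to the inner metric $\din$ on $B_0$, proving an inner $p$-Poincar\'e inequality with dilation $1$, running Keith--Zhong in the inner metric, and then transferring back via the comparability of inner and ordinary balls. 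This is exactly the ``careful verification'' you flag as the principal obstacle, and it is where the real work lies; your alternative ``local extension'' route is not developed enough to be a genuine substitute.
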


Neither in the assumptions nor in the conclusion
do we assume uniformity in the radius $r$ of the balls $B(x,r)$
involved.
It is worth noting
that the corresponding result with
global assumptions and a global conclusion fails in locally compact spaces,
due to a counterexample by Koskela~\cite{Koskela}.
In complete spaces it holds by Keith--Zhong~\cite{KZ}.

In Section~\ref{sect-Leb}, we turn to Lebesgue points and show that
Sobolev (Newtonian) functions have Lebesgue points 
outside a set of zero \p-capacity.
Traditionally,
as well as in metric spaces, such results are shown
using the density of continuous functions.
Here we avoid using this property and
instead exploit the Newtonian theory in a different
and novel way, which may be of interest also under global assumptions.

In the next section we consider the
density of Lipschitz and locally Lipschitz functions
in the Sobolev (Newtonian) space $\Np(X)$.
There are two existing results under global assumptions in the literature,
one assuming doubling and a Poincar\'e inequality, due to Shanmugalingam~\cite{Sh-rev},
and the other more recent one assuming $p>1$, completeness and doubling but no Poincar\'e
inequality, due to 
Ambrosio--Colombo--Di Marino~\cite{AmbCD}
and Ambrosio--Gigli--Savar\'e~\cite{AmbGS}.
We extend both results to local assumptions, and 
combine them.
Among other results, we obtain the following ``local-to-global''
density result.

\begin{thm} \label{thm-Lipc-dense-intro}
If $X$ is proper and connected and $\mu$ is locally doubling and 
supports a local \p-Poincar\'e inequality then 
Lipschitz functions with compact support are dense in $\Np(X)$. 
\end{thm}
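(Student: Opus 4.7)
The plan is to combine three ingredients: upgrade the local hypotheses to semilocal ones via the earlier results of the paper, truncate $u$ to compact support using Lipschitz cutoffs, and then apply an on-a-ball density theorem for Lipschitz functions (the semilocal analogue of Shanmugalingam's density result that would be established earlier in the same section under semilocal assumptions).

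By Proposition~\ref{prop-semilocal-doubling-intro} and Theorem~\ref{thm-PI-intro}, $\mu$ is semilocally doubling and supports a semilocal \p-Poincar\'e inequality, so on any fixed ball in $X$ the doubling and Poincar\'e constants are uniform among subballs. Now fix $u \in \Np(X)$, $\varepsilon>0$, a basepoint $x_0 \in X$, and Lipschitz cutoffs $\eta_k$ equal to $1$ on $B(x_0,k)$, vanishing outside $B(x_0,2k)$, with $\Lip \eta_k \le 1/k$. The standard product rule yields an upper gradient of $u\eta_k$ bounded by $\eta_k g_u + |u|/k$ on the annulus $B(x_0,2k)\setm B(x_0,k)$; since $u,g_u \in \Lp(X)$, dominated convergence gives $u\eta_k \to u$ in $\Np(X)$. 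Properness of $X$ makes each $\supp(u\eta_k) \subset \itoverline{B(x_0,2k)}$ compact, so the task reduces to approximating a compactly supported $w := u\eta_k$ by functions in $\Lipc(X)$.

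Choose $R$ so that $\supp w \subset B(x_0,R)$, and apply the semilocal version of the Lipschitz-density theorem on the compact closed ball $\itoverline{B(x_0,R+1)}$, whose doubling and Poincar\'e constants are uniform by the first step. This furnishes a Lipschitz function $v$ on this ball with $\|w-v\|_{\Np(B(x_0,R+1))}$ arbitrarily small. Let $\chi$ be a Lipschitz cutoff equal to $1$ on $B(x_0,R)$ and vanishing outside $B(x_0,R+1)$; then $\chi v$, extended by zero, lies in $\Lipc(X)$, and since $\chi \equiv 1$ on $\supp w$, the product rule for upper gradients together with the vanishing of $w$ outside $B(x_0,R)$ gives
\[
\|w - \chi v\|_{\Np(X)} \le C \|w-v\|_{\Np(B(x_0,R+1))},
\]
where $C$ depends only on $\Lip\chi$ and the ball. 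Choosing the approximation small enough completes the proof.

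The main obstacle is the interplay in the last step between the \emph{local} (on a ball) nature of the density-of-Lipschitz statement and the global requirement that the final function be Lipschitz with compact support in $X$. Multiplying by $\chi$ introduces a term $|w-v|\,\Lip\chi$ in the upper gradient of $\chi v$ on the transition annulus $B(x_0,R+1)\setm B(x_0,R)$, which must be absorbed into the $\Np$-approximation on the ball; this works precisely because $\chi=1$ on $\supp w$, so the error lives in the region where $v$ was chosen close to $w$. The truncation step is routine but also relies on the product rule for upper gradients and on $u \in \Lp(X)$; connectedness enters only through its use in Theorem~\ref{thm-PI-intro}.
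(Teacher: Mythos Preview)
Your proof is correct and follows essentially the same route as the paper: upgrade the local assumptions to semilocal ones via Proposition~\ref{prop-semilocal-doubling-intro} and Theorem~\ref{thm-PI-intro}, then approximate on a large ball using the on-a-ball density lemma (Lemma~\ref{lem-Lip-dense-Np-2B} in the paper) and cut off. The paper packages the latter two steps as Theorem~\ref{thm-semi-dense} and skips your preliminary truncation $u\mapsto u\eta_k$ by directly controlling the tail $u(1-\eta)$ via $\|u\|_{\Np(X\setm\itoverline{B})}<\eps$, but this is a minor organizational difference.
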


In Section~\ref{sect-qcont} we look at consequences of the 
obtained density results,
primarily quasicontinuity and various properties of
the Sobolev capacity $\Cp$.
We end the paper with a discussion on how much of the nonlinear 
potential theory for \p-harmonic functions,
developed e.g.\ in the book \cite{BBbook}, holds under local assumptions,
and explain 
that indeed most of the results therein
extend to
this setting, with the main exception being the Liouville theorem,
which actually fails without global assumptions.
As most of the results in \cite{BBbook} are either local or semilocal 
(e.g.\ on bounded domains) this is
not so surprising, and indeed this has already been hinted upon in the literature, 
as mentioned above.

The importance of distinguishing between local and global assumptions 
is certainly more
apparent when discussing global properties, such as the Dirichlet problem on
unbounded domains 
(as in Hansevi~\cite{Hansevi1}, \cite{Hansevi2}) or
existence of global singular functions
(as in Holopainen--Shanmugalingam~\cite{HoSh}).
We hope that the theory developed in this paper will provide
a suitable foundation for such studies.

\begin{ack}
We would like to thank the anonymous referee for
encouraging us to add more details to some of the proofs.
The authors 
were supported by the Swedish Research Council, 
grants 621-2011-3139, 621-2014-3974 and 2016-03424.
\end{ack}

\section{Upper gradients and Newtonian spaces}
\label{sect-prelim}

We assume throughout the paper
that $1 \le p<\infty$ and that $X=(X,d,\mu)$ is a metric space equipped
with a metric $d$ and a positive complete  Borel  measure $\mu$
such that $0 < \mu(B)<\infty$ for all balls $B \subset X$. 
It follows that $X$ is  separable and
Lindel\"of.
Proofs of the results in 
this section can be found in the monographs
Bj\"orn--Bj\"orn~\cite{BBbook} and
Heinonen--Koskela--Shanmugalingam--Tyson~\cite{HKSTbook}.

A \emph{curve} is a continuous mapping from an interval,
and a \emph{rectifiable} curve is a curve with finite length.
We
will only consider curves which are compact
and 
rectifiable, and thus each curve can 
be parameterized by its arc length $ds$. 
A property is said to hold for \emph{\p-almost every curve}
if it fails only for a curve family $\Ga$ with zero \p-modulus, 
i.e.\ there exists $0\le\rho\in L^p(X)$ such that 
$\int_\ga \rho\,ds=\infty$ for every rectifiable curve $\ga\in\Ga$.

Following Heinonen--Kos\-ke\-la~\cite{HeKo98},
we introduce upper gradients 
as follows 
(they called them very weak gradients).

\begin{deff} \label{deff-ug}
A Borel function $g: X \to [0,\infty]$  is an \emph{upper gradient} 
of a function $f:X \to \eR:=[-\infty,\infty]$
if for all nonconstant rectifiable curves  
$\gamma \colon [0,l_{\gamma}] \to X$,
\begin{equation} \label{ug-cond}
        |f(\gamma(0)) - f(\gamma(l_{\gamma}))| \le \int_{\gamma} g\,ds,
\end{equation}
where the left-hand side is considered to be $\infty$ 
whenever at least one of the 
terms therein is infinite.
If $g:X \to [0,\infty]$ is measurable 
and \eqref{ug-cond} holds for \p-almost every nonconstant rectifiable curve,
then $g$ is a \emph{\p-weak upper gradient} of~$f$. 
\end{deff}

The \p-weak upper gradients were introduced in
Koskela--MacManus~\cite{KoMc}. 
It was also shown therein
that if $g \in \Lploc(X)$ is a \p-weak upper gradient of $f$,
then one can find a sequence $\{g_j\}_{j=1}^\infty$
of upper gradients of $f$ such that $\|g_j-g\|_{L^p(X)} \to 0$.
If $f$ has an upper gradient in $\Lploc(X)$, then
it has an a.e.\ unique \emph{minimal \p-weak upper gradient} $g_f \in \Lploc(X)$
in the sense that for every \p-weak upper gradient $g \in \Lploc(X)$ of $f$ we have
$g_f \le g$ a.e., see Shan\-mu\-ga\-lin\-gam~\cite{Sh-harm}.
Following Shanmugalingam~\cite{Sh-rev}, 
we define a version of Sobolev spaces on the metric space $X$.

\begin{deff} \label{deff-Np}
For a measurable function $f:X\to \eR$, let 
\[
        \|f\|_{\Np(X)} = \biggl( \int_X |f|^p \, d\mu 
                + \inf_g  \int_X g^p \, d\mu \biggr)^{1/p},
\]
where the infimum is taken over all upper gradients $g$ of $f$.
The \emph{Newtonian space} on $X$ is 
\[
        \Np (X) = \{f: \|f\|_{\Np(X)} <\infty \}.
\]
\end{deff}

The quotient
space $\Np(X)/{\sim}$, where  $f \sim h$ if and only if $\|f-h\|_{\Np(X)}=0$,
is a Banach space and a lattice, see Shan\-mu\-ga\-lin\-gam~\cite{Sh-rev}.
In this paper we assume that functions in $\Np(X)$
 are defined everywhere (with values in $\eR$),
not just up to an equivalence class in the corresponding function space.
This is important for upper gradients to make sense.

For a measurable set $E\subset X$, the Newtonian space $\Np(E)$ is defined by
considering $(E,d|_E,\mu|_E)$ as a metric space in its own right.
We say  that $f \in \Nploc(E)$ if
for every $x \in E$ there exists a ball $B_x\ni x$ such that
$f \in \Np(B_x \cap E)$.
If $f,h \in \Nploc(X)$, then $g_f=g_h$ a.e.\ in $\{x \in X : f(x)=h(x)\}$,
in particular  for $c \in \R$ we have
$g_{\min\{f,c\}}=g_f \chi_{\{f < c\}}$ a.e.

\begin{deff}
The (Sobolev) \emph{capacity} 
of a set $E$  is the number 
\begin{equation*} 
  \Cp(E) =\inf_u    \|u\|_{\Np(X)}^p,
\end{equation*}
where the infimum is taken over all $u\in \Np (X)$ such that $u=1$ on $E$.
\end{deff}

We say that a property holds \emph{quasieverywhere} (q.e.)\ 
if the set of points  for which the property does not hold 
has capacity zero. 
The capacity is the correct gauge 
for distinguishing between two Newtonian functions. 
If $u \in \Np(X)$, then $u \sim v$ if and only if $u=v$ q.e.
Moreover, 
if $u,v \in \Nploc(X)$ and $u= v$ a.e., then $u=v$ q.e.

We let $B=B(x,r)=\{y \in X : d(x,y) < r\}$ denote the ball
with centre $x$ and radius $r$, and let $\la B=B(x,\la r)$.
We assume throughout the paper that balls are open.
In metric spaces it can happen that
balls with different centres and/or radii 
denote the same set. 
We will however make the convention that a ball $B$ comes with
a predetermined centre and radius $r_B$. 
Note that it can happen that  $B(x_0,r_0) \subset B(x_1,r_1)$
even when $r_0 > r_1$.
In disconnected spaces this can happen also when $r_0 > 2 r_1$.
If $X$ is connected, then $r_0 >2r_1$ is possible only when 
$B(x_0,r_0)= B(x_1,r_1)=X$.

\section{Local doubling}
\label{sect-doubling}

One can think of several different possibilities for local assumptions. 
We will make them precise below. 
In this section we concentrate on the doubling property and then 
consider Poincar\'e inequalities in the next section.

\begin{deff} \label{def-local-doubl-mu}
The measure \emph{$\mu$ is doubling within $B(x_0,r_0)$}
if there is $C>0$ (depending on $x_0$ and $r_0$)
such that $\mu(2B)\le C \mu(B)$ 
for all balls $B \subset B(x_0,r_0)$.

We say that $\mu$ is \emph{locally doubling} (on $X$) if 
for every $x_0 \in X$ there is $r_0>0$ 
(depending on $x_0$) such that $\mu$ is doubling within $B(x_0,r_0)$.

If $\mu$ is doubling within every ball $B(x_0,r_0)$ then 
it is \emph{semilocally doubling} (on $X$),
and if moreover $C$ is independent of $x_0$ and $r_0$,
then $\mu$ is \emph{globally doubling} (on $X$).
\end{deff}

Note  that when saying that $\mu$ is doubling \emph{within} $B(x_0,r_0)$
this is (implicitly) done with respect to $X$ as the balls are all
with respect to $X$, and moreover $2B$ does not have to be a subset
of $B(x_0,r_0)$.
This is not the same as saying that $\mu$ is globally doubling 
\emph{on} $B(x_0,r_0)$,
which refers to balls with respect to $B(x_0,r_0)$.

If $\mu$ is locally doubling on $X$ and $\Om \subset X$ is open,
then $\mu$  is also locally doubling on $\Om$.
This hereditary property
fails for semilocal and global doubling,
see 
Remark~\ref{rmk-global-restriction} below.

An even weaker property is that $\mu$ is \emph{pointwise doubling} 
at $x_0 \in X$ if there are $C,r_0>0$ such that 
$\mu(B(x_0,2r)) \le C \mu(B(x_0,r))$ for $0<r<r_0$.
Requiring such a pointwise assumption and a similar pointwise Poincar\'e 
inequality at every $x_0 \in X$ is too weak for most results.
See however, Bj\"orn--Bj\"orn--Lehrb\"ack~\cite{BBLeh1}, \cite{BBLehFirstthin}
for  capacity estimates using such
pointwise assumptions.

\begin{deff}  \label{def-local-doubl-X}
The space $X$ is \emph{globally doubling}
if there is a constant $N$ such that every ball $B(x,r)$ 
can be covered by at most $N$
balls with radii $\frac{1}{2}r$. 

The space $X$ is \emph{locally doubling} if 
for every $x_0 \in X$ there is $r_0>0$ 
such that $B(x_0,r_0)$ is globally doubling.
Moreover,  $X$ is 
\emph{semilocally doubling}
if every ball $B \subset X$ is globally doubling.
\end{deff}

\begin{remark} \label{rmk-global-restriction}
Let $B$ be a ball.
If $\mu$ is globally doubling then
it does not follow that $\mu|_B$ is globally doubling
on $B$, see Example~\ref{ex-local-ass-not-inherit}.
On the other hand if the space $X$ is globally doubling then
so is $B$ (as a metric space).
This is why Definition~\ref{def-local-doubl-X} 
differs from Definition~\ref{def-local-doubl-mu}
in that it considers balls with respect to
$B$ which are not necessarily balls with respect to $X$.
It is possible to give an equivalent definition
of (semi)local doubling of $X$ more in the spirit
of Definition~\ref{def-local-doubl-mu}, which only uses
balls with respect to
$X$, but such a definition is 
more technical to state and hence
we prefer our Definition~\ref{def-local-doubl-X}.
\end{remark}

It is rather immediate that every subset of a globally doubling
metric space is itself globally doubling, and hence the same
hereditary property also holds for (semi)local doubling.
It is also easy to see that every bounded set in a semilocally
doubling metric space is totally bounded.
See Heinonen~\cite[Section~10.13]{heinonen} for more on doubling
metric spaces. 

If $\mu$ is (semi)locally resp.\ globally doubling, 
then so is
$X$ by the following result.

\begin{prop} \label{prop-doubling-mu-2/3}
Assume that $\mu$ is doubling within $B_0=B(x_0,r_0)$
in the sense of Definition~\ref{def-local-doubl-mu}.
Then $\de B_0$ is globally doubling for every $\de< \tfrac{2}{3}$,
with $N$ depending only on $\de$ and the doubling constant within $B_0$.
\end{prop}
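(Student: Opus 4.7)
My plan is to adapt the classical Vitali-style argument that a doubling measure forces metric doubling, carefully tracking which comparisons require balls that sit inside $B_0$ (where the hypothesis is available). The first preparatory step is to iterate the doubling condition: by induction, $\mu(B(y, 2^k s)) \le C^k \mu(B(y, s))$ whenever $B(y, 2^{k-1} s) \subset B_0$. Setting $\rho_0 := (1-\delta) r_0/3$, the plan is then to handle small intrinsic balls directly, reduce large ones to a bounded covering of all of $\delta B_0$, and show that this latter covering can be obtained with constants depending only on $\delta$ and $C$.

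Given an intrinsic ball $B_\delta(x, r) := B(x, r) \cap \delta B_0$ with $x \in \delta B_0$, I would pick a maximal pairwise disjoint family $\{B(x_i, r/4)\}_{i=1}^N$ of ambient balls with centers $x_i \in B_\delta(x, r)$; by maximality, $B_\delta(x, r) \subset \bigcup_{i} B_\delta(x_i, r/2)$. When $r \le \rho_0$, the chain $d(x_i, x_0) + 2r < 3r + \delta r_0 \le r_0$ forces $B(x_i, 2r) \subset B_0$, so four iterations of doubling yield $\mu(B(x, 5r/4)) \le \mu(B(x_i, 9r/4)) \le C^4 \mu(B(x_i, r/4))$. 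Combining with the disjointness-based estimate $\sum_i \mu(B(x_i, r/4)) \le \mu(B(x, 5r/4))$ gives $N \le C^4$.

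For $r > \rho_0$ I would observe that $B_\delta(x, r) \subset \delta B_0$ and $\rho_0/2 < r/2$, so it suffices to cover $\delta B_0$ itself by some fixed number $M = M(\delta, C)$ of intrinsic balls of radius $\rho_0/2$ and then enlarge them to radius $r/2$. Taking a maximal disjoint family $\{B(z_j, \rho_0/4)\}_{j=1}^M$ with $z_j \in \delta B_0$ yields such a cover, and since each $B(z_j, \rho_0/4) \subset B_0$, we obtain $M \cdot \inf_j \mu(B(z_j, \rho_0/4)) \le \mu(B_0)$. Bounding $M$ thus reduces to proving a uniform positive lower bound on $\mu(B(z, \rho_0/4))$ for $z \in \delta B_0$.

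This uniform lower bound is the main obstacle, and it is where the threshold $\delta < 2/3$ enters. For any $z \in \delta B_0$, the definition of $\delta B_0$ gives $B(z, (1-\delta) r_0) \subset B_0$, so one application of the hypothesis yields $\mu(B(z, 2(1-\delta) r_0)) \le C \mu(B(z, (1-\delta) r_0))$. The crucial inclusion $B(z, 2(1-\delta) r_0) \supset B(x_0, (2-3\delta) r_0)$ follows from the triangle inequality, and $\delta < 2/3$ is precisely what makes the right-hand side a non-degenerate reference ball of positive radius. Four downward iterations from $B(z, (1-\delta) r_0)$ to $B(z, \rho_0/4)$, all safely inside $B_0$, give $\mu(B(z, (1-\delta) r_0)) \le C^4 \mu(B(z, \rho_0/4))$; chaining the two estimates produces $\mu(B(z, \rho_0/4)) \ge \mu(B(x_0, (2-3\delta) r_0))/C^5$. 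A final iterated doubling at $x_0$ bounds $\mu(B_0)/\mu(B(x_0, (2-3\delta) r_0))$ by $C^{K(\delta)}$, removing all dependence on $\mu(B_0)$ and $r_0$, and giving a final bound $N \le C^{5 + K(\delta)}$ depending only on $\delta$ and $C$.
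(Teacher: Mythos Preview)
Your proof is correct and follows essentially the same Vitali-packing strategy as the paper: split into small and large radii, bound the small case by comparing disjoint packing balls to an enclosing ball via iterated doubling, and in the large case invoke the key inclusion of a fixed reference ball $B(x_0,c(\delta)r_0)$ (with $c(\delta)>0$ precisely when $\delta<\tfrac23$) inside each $B(z,\,\text{const}\cdot r_0)$ to get a uniform lower measure bound. The only organizational difference is that the paper uses a single packing with $r'=\min\{r/4,\,r_0/24\}$ and reference ball $(\tfrac23-\delta)B_0$, whereas you split into two separate packings with threshold $\rho_0=(1-\delta)r_0/3$ and reference ball $(2-3\delta)B_0=3(\tfrac23-\delta)B_0$; these choices are equivalent up to harmless constants.
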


Example~\ref{ex-doubling-2/3} below shows that the
constant $\tfrac{2}{3}$ is optimal and that
it can even happen that $\tfrac23 B_0$ is not totally bounded.

\begin{proof}
Let $B'=B(x,r) \cap \de B_0$ be an arbitrary ball 
with respect to
$\de B_0$ for some $\de < \tfrac{2}{3}$.
Then $x \in \de B_0$ and we may assume that $r \le 2 \de r_0$.
Let $B=B(x,r)$ and
$r'=\min\bigl\{\tfrac{1}{4}r,\tfrac{1}{24}r_0\bigr\}$.
Assume that $x_i \in B'$, $i=1,\ldots,N$, are such that
$d(x_i,x_j) \ge  2r'$ if $i \ne j$.
Then $B(x_i,r')$ are pairwise disjoint and
$B(x_i,8r') \subset B_0$.
We shall show that there is a bound for $N$.
Let $C_\mu$ be the doubling constant for $\mu$ within $B_0$.

If $r'=\tfrac{1}{4}r$, then 
\begin{equation}   \label{eq-C-mu-4}
   \mu(2B)
   \le \mu(B(x_i,16r'))
  \le C_\mu  \mu(B(x_i,8r'))
  \le C_\mu^4  \mu(B(x_i,r')),
\end{equation}
and hence
\[
   N \min_i \mu(B(x_i,r')) 
   \le \sum_{i=1}^N \mu(B(x_i,r')) 
   \le \mu(2B)
   \le C_\mu^4 \min_i \mu(B(x_i,r')),
\]
which yields that $N \le C_\mu^4$. 
On the other hand, if $r'=\tfrac{1}{24}r_0$ then 
as in~\eqref{eq-C-mu-4},
\[
   \mu\bigl(\bigl(\tfrac{2}{3}-\de\bigr) B_0\bigr) 
  \le  \mu\bigl(B\bigl (x_i,\tfrac{2}{3}r_0\bigr)\bigr) 
  =   \mu\bigl(B(x_i,16r'))
  \le C_\mu^{4}  \mu(B(x_i,r')).
\]
Therefore
\begin{align*}
   N \min_i \mu(B(x_i,r')) 
   & \le \sum_{i=1}^N \mu(B(x_i,r')) 
   \le \mu(B_0) \\
   &\le \frac{C_\mu^{4}\mu(B_0)}{   \mu\bigl(\bigl(\tfrac{2}{3}-\de\bigr) B_0\bigr)}
    \min_i \mu(B(x_i,r'))
   \le M \min_i \mu(B(x_i,r')),
\end{align*}
where $M$ only depends on $C_\mu$ and  $\de$.
Hence $N \le M$.

We can thus find a maximal pairwise  disjoint collection of 
$\{B(x_i,r')\}_{i=1}^N$ with  $N \le \max\{M,C_\mu^4\}$ elements.
As the collection is maximal we see that 
\[
   B' 
\subset \bigcup_{i=1}^N B(x_i,2r') 
\subset \bigcup_{i=1}^N B\bigl(x_i,\tfrac{1}{2}r\bigr).
\] 
Hence $\de B_0$ is globally doubling.
\end{proof}

\begin{example} \label{ex-doubling-2/3}
Let $I_0=\{0\}\times[-1,0]\subset\R^2$ and 
$I_j=\{j\}\times\bigl[\tfrac23-\tfrac13\cdot 2^{-j},1\bigr]\subset\R^2$, 
$j=1,2,\ldots$,
be vertical linear segments in the plane.
Equip each $I_j$ with a multiple of the 1-dimensional Hausdorff measure so
that $\mu(I_j)=2^{-j}$, $j=0,1,\ldots$.
Let $X=\bigcup_{j=0}^\infty I_j$, equipped with $\mu$ and the metric $d$
so that for $x=(j,x_2)\in I_j$ and $y=(k,y_2)\in I_k$,
\[
d(x,y)= \begin{cases}
      |x_2-y_2|, & \text{if } j=k \text{ or } j=0 \text{ or } k=0, \\
      1, & \text{if } j \ne k, \ j,k\ge 1.
      \end{cases}
\]
Let $x_0=(0,0)$. Then it is easily verified that $\mu$ is doubling
within $B_0=B(x_0,1)$. 
However, $\tfrac23 B_0$ is not totally bounded, and thus not globally doubling.

This example also shows that the next two results are sharp.
More precisely, $X$ is bounded and complete but noncompact and thus not
proper.
Both $X$ and $\mu$ are locally doubling but neither is semilocally 
doubling. 
\end{example}

The most common global assumptions are that
$X$ is complete and supports a global \p-Poincar\'e
inequality and that $\mu$ is globally doubling.
It then follows that $X$ is proper and connected (and even quasiconvex).
Under local assumptions these properties
need to be imposed separately.
Connectedness is strongly related to Poincar\'e inequalities,
which we discuss in the next section. 
Properness always implies completeness and 
the proof of \cite[Proposition~3.1]{BBbook} also shows the following
equivalence.

\begin{lem}   \label{lem-proper-equiv-complete}
Assume that $X$ is semilocally doubling.
Then $X$ is proper if and only if $X$ is complete.
\end{lem}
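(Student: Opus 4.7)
The plan is to prove the equivalence by showing each direction separately, with the forward implication being a general metric space fact and the reverse direction being the one that genuinely uses the semilocal doubling hypothesis.

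For the direction proper $\imp$ complete, I would simply observe that this holds in any metric space: a Cauchy sequence is bounded, hence contained in a closed ball, which by properness is compact, so the sequence has a convergent subsequence; being Cauchy, it then converges to the same limit. No doubling hypothesis is needed here.

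For the reverse direction complete $\imp$ proper, the strategy is to reduce properness to showing that every closed ball $\itoverline{B}$ is compact, since an arbitrary closed bounded set $E \subset X$ sits inside some such ball and is closed in it. By Definition~\ref{def-local-doubl-X} the semilocal doubling of $X$ means that every ball $B \subset X$ is globally doubling, so by the standard fact (recalled in the paragraph before Proposition~\ref{prop-doubling-mu-2/3}) that bounded subsets of globally doubling metric spaces are totally bounded, the ball $B$, and hence $E$, is totally bounded. Since $X$ is complete and $E$ is closed in $X$, the set $E$ is itself complete as a metric subspace. Invoking the classical characterization of compactness as completeness plus total boundedness, we conclude that $E$ is compact, so $X$ is proper.

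There is no real obstacle here; the only point to be careful about is to use the correct notion of doubling, namely that of Definition~\ref{def-local-doubl-X} for the space $X$ (which gives total boundedness of arbitrary balls via covering by smaller balls), rather than the doubling of the measure $\mu$ from Definition~\ref{def-local-doubl-mu}. If one preferred to start from semilocal doubling of $\mu$ instead, one could first apply Proposition~\ref{prop-doubling-mu-2/3} to each ball $\tfrac12 B_0$ to conclude semilocal doubling of $X$, but as stated the lemma is about semilocal doubling of $X$ itself, so this extra step is unnecessary.
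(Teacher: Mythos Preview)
Your proof is correct and follows essentially the same approach as the paper, which simply points to the proof of \cite[Proposition~3.1]{BBbook}: the nontrivial direction uses that semilocal doubling of $X$ makes every ball (and hence every bounded set) totally bounded, and then completeness plus total boundedness yields compactness. The paper does not spell this out but relies on exactly the observations you make.
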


If $X$ is only locally doubling and complete, then it is locally compact
but not necessarily proper as the following example shows.

\begin{example}
Let  $X=\R^2$ equipped with the Gaussian measure 
$Ce^{-x_1^2-x_2^2}\,dx$ but with the
distance $d(x,y)=\arctan |x-y|$.
Then $X$ is bounded and complete, but not proper (as $X$ is a closed bounded 
noncompact set). 
At the same time,
$\mu$ clearly is locally doubling and
  supports a local $1$-Poincar\'e inequality.
\end{example}

In proper spaces, local and semilocal doubling are equivalent,
as we shall now see.

\begin{prop} \label{prop-semilocal-doubling}
Assume that $X$ is proper.
If $X$ resp.\ $\mu$ is locally doubling, 
then it is semilocally doubling.
\end{prop}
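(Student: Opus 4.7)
The plan is to fix an arbitrary ball $B_0=B(x_0,r_0)$ and produce a single doubling constant that works for all subballs $B\subset B_0$, since this is what semilocal doubling within $B_0$ requires. The essential geometric input is that $\overline{B_0}$ is compact, by properness of $X$; the rest is a standard two-step compactness argument.

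First I would use local doubling to pick, for each $x\in\overline{B_0}$, a radius $r(x)>0$ and a constant $C(x)\ge 1$ such that $\mu$ is doubling within $B(x,r(x))$ with constant $C(x)$. The open cover $\{B(x,r(x)/2)\}_{x\in\overline{B_0}}$ admits a finite subcover $\{B(x_i,r_i/2)\}_{i=1}^M$, where $r_i=r(x_i)$. Set $\delta=\tfrac14\min_i r_i>0$ and $C_*=\max_i C(x_i)$.

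Next I would split the analysis into two cases for a subball $B=B(y,r)\subset B_0$. If $r\le\delta$, the centre $y$ lies in some $B(x_i,r_i/2)$, and the triangle inequality with $2r\le 2\delta\le r_i/2$ gives $2B\subset B(x_i,r_i)$; doubling within $B(x_i,r_i)$ then immediately yields $\mu(2B)\le C_*\mu(B)$. If $r>\delta$, I would invoke compactness a second time to cover $\overline{B_0}$ by finitely many balls $\{B(y_k,\delta/2)\}_{k=1}^L$ and set $c=\min_k \mu(B(y_k,\delta/2))>0$. For any $y\in\overline{B_0}$ one has $y\in B(y_k,\delta/2)$ for some $k$, whence $B(y_k,\delta/2)\subset B(y,\delta)\subset B$ and $\mu(B)\ge c$. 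Combined with $\mu(2B)\le\mu(2B_0)<\infty$, this gives $\mu(2B)\le(\mu(2B_0)/c)\mu(B)$. The two cases together show that $\mu$ is doubling within $B_0$ with constant $\max(C_*,\mu(2B_0)/c)$, and since $B_0$ was arbitrary we conclude that $\mu$ is semilocally doubling.

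For the case where $X$ itself is locally doubling, the same scheme applies: the finite subcover now consists of balls $B(x_i,r_i)$ that are globally doubling as metric subspaces, small subballs of $B_0$ are handled by inclusion into one of these, and large subballs are covered by finitely many balls of radius $\delta/2$ using total boundedness of $\overline{B_0}$. The only real subtlety, and thus the main bookkeeping obstacle, is to ensure that the half-radius covering balls delivered by the subspace doubling of $B(x_i,r_i)$ can be replaced by half-radius balls with centres in $B_0$, so that one genuinely verifies global doubling of $B_0$ in the sense of Definition~\ref{def-local-doubl-X}; this is done by the standard trick of choosing new centres inside the intersection with $B_0$, which multiplies the covering number by a fixed constant.
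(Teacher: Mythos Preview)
Your argument for $\mu$ follows the same two-regime strategy as the paper: small balls are handled by inclusion in one of finitely many local doubling neighbourhoods, while large balls get a uniform lower bound via a second finite cover. One inaccuracy: the inclusion $2B\subset 2B_0$ need not hold (take $X=\{0,1,10\}$, $B_0=B(0,2)=\{0,1\}$, $B=B(1,5)=\{0,1\}$; then $2B=X\not\subset 2B_0$), so the bound $\mu(2B)\le\mu(2B_0)$ is unjustified. The paper fixes this by first enlarging $r_0$ so that either $B_0=X$ or $r_0=\dist(x_0,X\setminus B_0)$, which forces $r_B\le 2r_0$ for every $B\subset B_0$ and hence $2B\subset 5B_0$. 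Your argument is easily repaired in the same way, or by noting that if $B_0\ne X$ and $w\notin B_0$ is fixed, then $r_B\le d(y,w)<r_0+d(x_0,w)$, giving a uniform upper bound on $\mu(2B)$.

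For the metric-space case the paper takes a shorter route than your small/large dichotomy: it simply observes that a finite union of globally doubling sets is again globally doubling, so covering $\overline{B_0}$ by finitely many globally doubling balls $B_{x_i}$ immediately makes $B_0$ (as a subset of their union) globally doubling. This avoids the re-centring bookkeeping you flag at the end.
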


\begin{proof}
Let $B_0$ be a ball.
If $X$ is locally doubling,  we can for each $x \in \itoverline{B}_0$
find a globally doubling ball $B_x \ni x$.
Since $X$ is proper, $\itoverline{B}_0$ is compact and thus
 we can find a finite set $\{x_i\}_{i=1}^N$
such that $B_0 \subset \bigcup_{i=1}^N B_{x_i}$.
It is easily seen that a finite union of globally doubling
sets is globally doubling, and hence $B_0$ is globally doubling.

Now assume instead that $\mu$ is locally doubling
and $B_0=B(x_0,r_0)$. 
By enlarging $r_0$, if necessary, we may assume that either
$r_0=\dist(x_0, X \setm B_0)$
or $B_0=X$.
Since $\itoverline{B}_0$ is compact (as $X$ is proper),
it can be covered by finitely many balls $B_j=B(x_j,r_j)$ 
such that $\mu$  is doubling within each ball $2B_j$. 
Let $r'=\min_{j} r_j$.
Again by compactness, we can cover $\itoverline{B}_0$ 
by finitely many balls $B'_j$ with
radii $r'/2$. 
Let $B(x,r) \subset B_0$ be arbitrary
and find $j$ and $j'$ such that $x \in B_j$ and $x \in B'_{j'}$.

If $r\le r'$ then $B(x,r)\subset 2B_j$ and hence the conclusion of the
proposition holds for $B(x,r)$ with constant $\max_{j} C_j$.
On the other hand, if
$r> r'$ then
$x\in B'_{j'}$  and hence
$B'_{j'}\subset B(x,r)$, which yields
the lower bound
\[
\mu(B(x,r))\ge \min_{j} \mu(B'_j)>0.
\]
Since $B(x,2r)\subset B(x_0,5r_0)$, we also have a uniform upper 
bound for $\mu(B(x,2r))$, which proves that $\mu$ is semilocally doubling.
\end{proof}

We will need the following local maximal function estimate.

\begin{prop} \label{prop-maximal-fn}
Assume that $\mu$ is doubling within the ball $B_0$
in the sense of Definition~\ref{def-local-doubl-mu},
and let $\Om \subset B_0$ be open.
For $f\in L^1(\Om)$, define
the noncentred local maximal function
\begin{equation} \label{eq-def-local-max-fn} 
     M^*_{\Om,B_0} f(x) := \sup_{B}
          \vint_{B} f\,d\mu,
          \quad x \in \Om,
\end{equation}
where the supremum is taken over all balls $B$ such that $x \in B \subset \Om$
and $\frac52 B\subset B_0$.
Then
\begin{equation} \label{eq-max-weak-L1} 
\mu(\Etau) \le \frac{C}{\tau}\int_{\Etau}|f|\,d\mu,
\quad \text{where } \Etau=\{x\in \Om: M^*_{\Om,B_0} f(x)>\tau\}.
\end{equation}
Moreover, if $t>1$, then 
\[
      \int_{\Om} (M^*_{\Om,B_0}f)^t \, d\mu \le C_t \int_{\Om} |f|^t \, d\mu.
\]
\end{prop}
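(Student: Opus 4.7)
The plan is to mimic the classical Hardy--Littlewood weak-$(1,1)$ argument for non-centred maximal functions, with the added care that every doubling step must be invoked on a ball still contained in $B_0$. The enlargement factor $\tfrac52$ in the definition of admissible balls is chosen precisely so that doubling within $B_0$ can be iterated enough times to control the $5$-fold enlargement produced by a Vitali-type covering.

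For each $x\in \Etau$ fix an admissible ball $B_x$ (so $x\in B_x\subset\Om$ and $\tfrac52 B_x\subset B_0$) with $\vint_{B_x}f\,d\mu>\tau$, hence also $\vint_{B_x}|f|\,d\mu>\tau$. All admissible balls lie in $B_0$ and $X$ is separable, so the standard $5r$-covering lemma yields a countable pairwise disjoint subfamily $\{B_{x_i}\}$ with $\Etau\subset\bigcup_i 5B_{x_i}$. The crucial geometric observation is that each $B_{x_i}\subset \Etau$: for any $y\in B_{x_i}$ the ball $B_{x_i}$ itself is admissible in the supremum defining $M^*_{\Om,B_0} f(y)$, forcing $M^*_{\Om,B_0} f(y)>\tau$. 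To bound $\mu(5B_{x_i})$, note that each of $B_{x_i}$, $2B_{x_i}$ and $\tfrac52 B_{x_i}$ lies in $B_0$ (the middle inclusion because $2B_{x_i}\subset \tfrac52 B_{x_i}$), and thus three successive applications of doubling within $B_0$ give
\[
\mu(5B_{x_i})\le C_\mu\mu(\tfrac52 B_{x_i})\le C_\mu\mu(4B_{x_i})\le C_\mu^2\mu(2B_{x_i})\le C_\mu^3\mu(B_{x_i}).
\]
Summing over the disjoint family and using $\bigsqcup_i B_{x_i}\subset\Etau$ together with $\tau\mu(B_{x_i})\le\int_{B_{x_i}}|f|\,d\mu$ yields
\[
\mu(\Etau)\le \sum_i\mu(5B_{x_i})\le C_\mu^3\sum_i\mu(B_{x_i})\le \frac{C_\mu^3}{\tau}\int_{\Etau}|f|\,d\mu,
\]
which is \eqref{eq-max-weak-L1}.

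For the strong $L^t$ bound with $t>1$, apply the standard Marcinkiewicz-type argument: split $f=f_1+f_2$ with $f_1=f\chi_{\{|f|\le\tau/2\}}$, so that $|f_1|\le\tau/2$ gives $M^*_{\Om,B_0} f_1\le\tau/2$ and hence $\Etau\subset\{M^*_{\Om,B_0}|f_2|>\tau/2\}$. Feeding the weak-type bound applied to $|f_2|$ into the layer-cake identity
\[
\int_\Om(M^*_{\Om,B_0} f)^t\,d\mu = t\int_0^\infty\tau^{t-1}\mu(\Etau)\,d\tau,
\]
then swapping order of integration by Fubini and applying H\"older, produces a constant $C_t$ of the form $c\cdot t/(t-1)$. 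The main obstacle is really the very first step: arranging the doubling iteration so that every ball on which doubling is invoked remains inside $B_0$. This is exactly what the factor $\tfrac52$ in the definition of $M^*_{\Om,B_0}$ buys us; after that, the rest of the argument is a routine adaptation of the classical proof and requires no further geometric structure on $X$ beyond what is given.
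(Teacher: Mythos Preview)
Your argument is correct and follows essentially the same route as the paper, which simply observes that the condition $\tfrac52 B\subset B_0$ guarantees $\mu(5B)\le C\mu(B)$ for every admissible ball and then defers to the standard proof (Lemma~3.12 and Theorem~3.13 in \cite{BBbook}). You have spelled out those details explicitly, including the important point that each selected ball $B_{x_i}$ lies inside $E_\tau$, which is what produces the sharper integral $\int_{E_\tau}|f|\,d\mu$ rather than $\int_\Om|f|\,d\mu$.
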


The constant $5$ in the factor $\frac52$ above and in the proof below comes
from the $5$-covering lemma. 
It is well known that also the  $(3+\eps)$-covering lemma
holds, for every $\eps>0$, 
see \cite[Remark~1.8, Example~1.9 and p.~36]{BBbook}.
Thus the factor $\frac{5}{2}$ can be replaced by any factor $>\frac{3}{2}$,
which would also make it possible to decrease some other constants in 
this paper. 
For simplicity we have chosen to just rely on the $5$-covering lemma, 
as is common practice in analysis on metric spaces.

\begin{proof}
Since  $\mu$ is doubling within the ball $B_0$,
it is true that $\mu(5B) \le C \mu(B)$ for every ball $B$ used
 in \eqref{eq-def-local-max-fn}.
Thus, the proof of Lemma~3.12 in \cite{BBbook}
directly applies also here showing  the first estimate
\eqref{eq-max-weak-L1}.
The second estimate then follows just as in
the proof of Theorem~3.13 in \cite{BBbook},
with $X$ therein replaced by~$\Om$.
\end{proof}

We end the section by noting the following consequence of local doubling,
which will be useful
later.

\begin{thm} \label{thm-Leb-ae}
\textup{(The Lebesgue differentiation theorem)}
Assume that $\mu$ is locally doubling.
If $f \in L^1\loc(X)$, then a.e.\
point is a Lebesgue point for $f$.
\end{thm}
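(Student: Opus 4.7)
The plan is to follow the classical Lebesgue differentiation argument, but with the local noncentred maximal function from Proposition~\ref{prop-maximal-fn} replacing the global one. Since being a Lebesgue point depends only on arbitrarily small balls around the point, and $X$ is Lindel\"of, it suffices to show the conclusion in a neighborhood of every $x_0\in X$. Fix such an $x_0$ and choose $r_0>0$ so that $\mu$ is doubling within $B_0:=B(x_0,r_0)$; set $\Om:=B(x_0,r_0/10)$, so that $\tfrac52 B(x,r)\subset B_0$ for every $x\in\Om$ and every $r<r_0/10$. Only such small balls are relevant for the Lebesgue-point condition at points of $\Om$.

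Next, I would approximate $f\chi_{B_0}\in L^1(B_0)$ by Lipschitz functions. Since $\mu(B_0)<\infty$ and $B_0$ is a separable metric space, the finite Borel measure $\mu|_{B_0}$ is automatically regular, and a standard distance-function cutoff argument shows that Lipschitz functions are dense in $L^1(B_0)$. Given $\eps>0$, decompose $f\chi_{B_0}=g+h$ with $g$ Lipschitz on $B_0$ and $\|h\|_{L^1(B_0)}<\eps$. Every point is a Lebesgue point of the continuous function $g$, so by the triangle inequality, for every $x\in\Om$,
\[
\limsup_{r\to 0^+}\vint_{B(x,r)}|f-f(x)|\,d\mu
 \le M^*_{\Om,B_0}(|h|)(x)+|h(x)|.
\]

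For any $\tau>0$, the weak-type $(1,1)$ bound in Proposition~\ref{prop-maximal-fn}, together with Chebyshev's inequality applied to $|h|$, then yield
\[
\mu\Bigl(\Bigl\{x\in\Om:\limsup_{r\to 0^+}\vint_{B(x,r)}|f-f(x)|\,d\mu>2\tau\Bigr\}\Bigr)
\le \frac{(C+1)\eps}{\tau}.
\]
Since $\eps>0$ is arbitrary, this set has $\mu$-measure zero for each fixed $\tau>0$, and letting $\tau$ run through a sequence tending to $0$ gives the theorem. I expect the only mildly subtle point to be the $L^1(B_0)$-density of Lipschitz functions; this is purely measure-theoretic, requires no Poincar\'e-type hypothesis, and thus should not constitute a real obstacle, while the rest of the argument is an entirely routine adaptation of the Euclidean proof to the localized maximal function.
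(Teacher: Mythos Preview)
Your argument is correct, but it takes a different route from the paper's. The paper localizes and then simply cites Heinonen's book: the Vitali covering theorem holds in each ball where $\mu$ is doubling (proof of \cite[Theorem~1.6]{heinonen}), and Lebesgue differentiation follows from it (\cite[Remark~1.13]{heinonen}); a Lindel\"of covering finishes. You instead run the other classical proof, via the weak-type $(1,1)$ inequality for the local maximal function (Proposition~\ref{prop-maximal-fn}) and $L^1$-approximation by Lipschitz functions. Your version has the virtue of being self-contained within the paper, since Proposition~\ref{prop-maximal-fn} is already established; the paper's version is shorter but outsources the work to \cite{heinonen}.

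Two minor points worth tightening. First, you should also shrink $r_0$ so that $f\in L^1(B_0)$; the hypothesis $f\in L^1\loc(X)$ only guarantees integrability on \emph{some} ball around $x_0$, not on every ball where $\mu$ happens to be doubling. Second, in the bound $\vint_{B(x,r)}|h|\,d\mu\le M^*_{\Om,B_0}(|h|)(x)$ you implicitly use that $B(x,r)\subset\Om$, which holds only for $r$ below $\dist(x,X\setm\Om)$, not for all $r<r_0/10$; this is harmless for the limsup but should be said. Neither affects the validity of the argument.
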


\begin{proof}
As $X$ is Lindel\"of, we can cover $X$ by balls $\{B_j\}_{j=1}^\infty$
such that $f \in L^1(10B_j)$ and
$\mu$ is doubling within each $10 B_j$.
By the proof of Theorem~1.6 in Heinonen~\cite{heinonen},
the Vitali covering theorem holds in each $B_j$.
It then follows from Remark~1.13 in \cite{heinonen} that
the Lebesgue differentiation theorem holds within each $B_j$,
and hence in $X$, as the union is countable.
\end{proof}

\section{Local Poincar\'e inequalities}
\label{sect-PI}

In this section we study local aspects of the Poincar\'e inequality
similarly to the doubling property in Section~\ref{sect-doubling}.
It will turn out that connectivity plays an important  role here.

\begin{deff} \label{def-PI}
Let $1 \le q < \infty$.
We say that the
\emph{$(q,p)$-Poincar\'e inequality holds within $B(x_0,r_0)$} 
if there are constants $C>0$ and $\lambda \ge 1$ (depending on $x_0$ and $r_0$)
such that for all balls $B\subset B(x_0,r_0)$, 
all integrable functions $u$ on $\la B$, and all upper gradients $g$ of $u$, 
\begin{equation}    \label{eq-def-local-PI}
        \biggl(\vint_{B} |u-u_B|^q \,d\mu\biggr)^{1/q}
        \le C r_B \biggl( \vint_{\lambda B} g^{p} \,d\mu \biggr)^{1/p}.
\end{equation}
We also say that $X$ (or $\mu$) supports 
a \emph{local $(q,p)$-Poincar\'e inequality} (on $X$) if
for every $x_0 \in X$ there is $r_0$ (depending on $x_0$) 
such that the $(q,p)$-Poincar\'e inequality holds within $B(x_0,r_0)$.

If the $(q,p)$-Poincar\'e inequality holds within every ball $B(x_0,r_0)$
then $X$ supports a \emph{semilocal $(q,p)$-Poincar\'e inequality},
and if moreover $C$ and $\la$ are independent of $x_0$ and $r_0$,
then $X$ supports a \emph{global $(q,p)$-Poincar\'e inequality}.

If $q=1$ we usually just write 
\emph{\p-Poincar\'e inequality}.
\end{deff}

The inequality \eqref{eq-def-local-PI} can equivalently
be required 
for 
all integrable functions $u$ on $\la B$, 
and all \p-weak upper gradients $g$ of $u$;
see \cite[Proposition~4.13]{BBbook} for other equivalent formulations.

As in the case of the
doubling condition, 
local Poincar\'e inequalities
  are inherited by open subsets, i.e.\ 
if $\Om \subset X$ is open and
$X$ supports a local $(q,p)$-Poincar\'e inequality,
then so does $\Om$.
This hereditary property
fails for semilocal and global Poincar\'e inequalities.

\begin{remark}
When defining (semi)local doubling 
in Definition~\ref{def-local-doubl-mu}
it is primarily a matter of taste 
(except for the constant $\tfrac23$ in Proposition~\ref{prop-doubling-mu-2/3})
whether the condition
is required for $B \subset B(x_0,r_0)$
or for $B \subset 2B \subset B(x_0,r_0)$, and the same is
true for $B$ and $\la B$ in the local Poincar\'e inequalities
in Definition~\ref{def-PI}.
However, for semilocal Poincar\'e inequalities it is 
 vital that the condition is for all $B \subset B_0$,
rather than 
for all $B \subset \la B \subset B_0$, which is a weaker requirement 
since $\la$ is allowed to depend on $B_0$.
(Consider e.g.\ $X=(-\infty,0]\cup[1,\infty)$ with the Euclidean metric
and Lebesgue measure.
Then for $x_0=0$, $r_0\ge2$ and $\la:=r_0$, the requirement $\la B\subset B(x_0,r_0)$
implies that $r_B\le1$ and hence 
\eqref{eq-def-local-PI} holds for all such balls,
while it clearly fails for $B(0,2)\subset B(x_0,r_0)$.)
\end{remark}

\begin{example} \label{ex-local-ass-not-inherit}
If a Poincar\'e inequality holds within
$B_0=B(x_0,r_0)$, 
then
it does not mean that $B_0$ (or $\itoverline{B}_0$) 
itself 
(as a metric space) supports a global 
Poincar\'e inequality.
Similarly, if 
$\mu$ is doubling within $B_0$, 
then it does not follow that  $\mu|_{B_0}$ (or $\mu|_{\itoverline{B}_0}$) 
is doubling.
To see this, 
let 
\[
X=\R^2 \setm \bigl\{(x,y)\in\R^2: 0<y<\sqrt{1-x^2}-e^{-1/|x|}\bigr\},
\]
i.e.\ $X$ is $\R^2$ with the open unit upper half-disc removed
and the curved cusp of exponential type at $(0,1)$,
\[
C_0=\bigl\{(x,y)\in\R^2: 0<\sqrt{1-x^2}-e^{-1/|x|}\le y<\sqrt{1-x^2}\bigr\},
\]
inserted back into the hole.
Then $\interior X$ is a uniform domain and hence the Lebesgue measure,
restricted to it,
is globally doubling and supports a global $1$-Poincar\'e inequality,
by Theorem~4.4 in Bj\"orn--Shanmugalingam~\cite{BjShJMAA}
(or \cite[Theorem~A.21]{BBbook}).
By Aikawa--Shanmugalingam~\cite[Proposition~7.1]{AikSh05},
the same is true for $X$ itself.

Now, if $x_0$ is the origin and we let $B_0=B(x_0,1)$, then $B_0\cap X$
and $\itoverline{B_0\cap X}$ have the cusps $C_0$ and $\itoverline{C}_0$ 
in the vicinity of the point $(0,1)$.
Hence the Lebesgue measure restricted to these sets is not doubling
near or at this point.

Moreover, $C_0$ is disconnected at $(0,1)$ and $\itoverline{C}_0$
is essentially disconnected at the point $(0,1)$
(which has zero capacity
with respect to $\itoverline{B_0\cap X}$ for all $p\ge 1$), 
so neither $B_0\cap X$ nor $\itoverline{B_0\cap X}$ supports any global
or semilocal Poincar\'e inequalities.
For $\itoverline{B_0\cap X}$ even the local doubling and 
all local Poincar\'e inequalities fail at $(0,1)$. 
\end{example}

Propositions~\ref{prop-semilocal-doubling-intro} 
and~\ref{prop-semilocal-doubling} about (semi)local
doubling are rather straightforward.
A bit more surprising, perhaps, is that 
Theorem~\ref{thm-PI-intro}
is true for Poincar\'e inequalities.
We will obtain the following more general version of Theorem~\ref{thm-PI-intro}.

\begin{thm} \label{thm-PI}
If $X$ is proper and connected and $\mu$ is locally doubling
and supports a local $(q,p)$-Poincar\'e inequality, then 
it supports a semilocal $(q,p)$-Poincar\'e inequality.
\end{thm}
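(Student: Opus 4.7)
The strategy is to combine the compactness afforded by properness with a chaining argument that exploits the connectedness of $X$. Fix a ball $B_0 = B(x_0, r_0)$. By Proposition~\ref{prop-semilocal-doubling}, $\mu$ is already semilocally doubling, so uniform doubling constants are available on any bounded region. For the Poincar\'e inequality, since $\overline{B_0}$ is compact, I cover it by finitely many balls within which the local $(q,p)$-Poincar\'e inequality holds and let $r^{*}$ be smaller than the Lebesgue number of this cover; then uniform constants $C_1$ and $\lambda_1$ work for \eqref{eq-def-local-PI} on every ball $B(x, r^{*})$ with $x \in \overline{B_0}$. In practice these uniform constants need to be established on a slight enlargement of $B_0$ to accommodate the chains constructed below, which is achieved simply by carrying out the same compactness argument on a larger ball.

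Given any ball $B \subset B_0$, I split into two regimes. If $\lambda_1 r_B \le r^{*}/2$, then $\lambda_1 B$ lies inside a single local Poincar\'e ball and \eqref{eq-def-local-PI} follows directly with the uniform constants. Otherwise $r_B > r^{*}/(2\lambda_1)$, and I employ a Haj\l asz--Koskela-type chain argument: for $\mu$-a.e.\ $x \in B$ (using Theorem~\ref{thm-Leb-ae}) one bounds $|u(x) - u_B|$ by a telescoping sum
\[
    \sum_{i=0}^{M-1} |u_{D_i} - u_{D_{i+1}}|,
\]
where $D_0, D_1, \ldots, D_M$ is a chain of balls of radius comparable to $r^{*}$ interpolating between a shrinking ball at $x$ and one comparable to $B$, with $\mu(D_i \cap D_{i+1})$ comparable to $\mu(D_i)$. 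Each term is controlled by the local Poincar\'e inequality on a small ball containing $D_i \cup D_{i+1}$. Summation, integration in $x$ over $B$, and the local maximal function estimate of Proposition~\ref{prop-maximal-fn} then yield \eqref{eq-def-local-PI} for $B$ with constants depending only on $B_0$.

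The technical heart of the proof, and its main obstacle, is the uniform chain construction; this is where connectedness is indispensable. Fix a step size $r$ and consider the set of points of $X$ reachable from $x_0$ by finite chains of $r$-balls. This set is open, since any chain to a point $y$ extends trivially to any point in $B(y, 2r)$. Its complement is also open, since if $w$ is unreachable then neither is any point at distance less than $2r$ from $w$ (otherwise one extra step would append $w$ to the chain). Connectedness of $X$ then forces the reachable set to coincide with $X$, and compactness of $\overline{B_0}$ yields a uniform bound $N = N(B_0)$ on the number of chain steps needed to reach every point of $\overline{B_0}$ from $x_0$; any two points of $B_0$ are thereby connected by a chain of at most $2N$ steps. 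Such chains remain in a controllably larger ball $B_0' = B(x_0, r_0 + 2 N r)$, so the uniform constants need to be established on $\overline{B_0'}$ rather than on $\overline{B_0}$; this does not create circularity because $N$ is determined by the ambient topology of $X$ near $B_0$ and the fixed step size $r$, independently of the subsequent Poincar\'e estimates. Both properness and connectedness appear explicitly in the hypotheses, and counterexamples are to be expected if either is dropped.
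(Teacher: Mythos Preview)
Your chain construction via connectedness is a genuinely different and more elementary route than the paper's. The paper first proves local quasiconvexity (Proposition~\ref{prop-ex-curve-in-B0} and Lemma~\ref{lem-ex-curve-in-B0}) and then places overlapping balls along a rectifiable curve; you instead observe directly that the set of $r$-chain-reachable points is clopen and invoke compactness for a uniform bound on chain length. This avoids the quasiconvexity machinery entirely and is a nice simplification. (Minor quibble: to guarantee $\mu(D_i\cap D_{i+1})\gtrsim\mu(D_i)$ you should take the step size at most half the ball radius, not equal to it.)

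However, the combination step has a real gap when $q\ge p$. Your pointwise telescoping down to a Lebesgue point gives
\[
|u(x)-u_{D_0}| \lesssim A + r^{*}\bigl(M^*g^p(x)\bigr)^{1/p},
\]
where $A$ is the horizontal contribution (constant in $x$). Taking the $L^q(B)$-norm of the second term requires controlling $\|M^*g^p\|_{L^{q/p}}$. For $q>p$ this is $\lesssim\|g^p\|_{L^{q/p}}=\|g\|_{L^q}^{q}{}^{\!/\!}{}^{... }$, which puts $\int g^q$ rather than $\int g^p$ on the right-hand side; for $q=p$ one needs $\|M^*g^p\|_{L^1}$, which is not controlled by $\|g^p\|_{L^1}$ (only weak-$L^1$). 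The level-set refinement does not rescue this: the tail integral $\int_{2A}^\infty t^{q-p-1}\,dt$ diverges for $q\ge p$. So the maximal-function route, as written, only delivers the case $q<p$.

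The fix is to abandon the pointwise estimate and work directly in $L^q$: cover $B$ by finitely many balls $D_j$ of radius $r^{*}$, apply the local $(q,p)$-Poincar\'e inequality to each $D_j$ to bound $\|u-u_{D_j}\|_{L^q(D_j)}$, and use your horizontal chain only to compare the \emph{numbers} $u_{D_j}$ with a fixed $u_{D_0}$. Summing over $j$ then gives $\|u-u_{D_0}\|_{L^q(B)}$ with the correct $(\int g^p)^{1/p}$ on the right for every $q\ge1$. This is exactly what the paper does via the iteration Lemma~\ref{lem-iteration-with-Q}; no vertical chain, no Lebesgue points, and no maximal function are needed.
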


The proof of Theorem~\ref{thm-PI} will be split into a number
  of lemmas, some of which may be of independent interest.
  It will be concluded  at the end of this section.
But first, we
discuss the assumptions in Theorem~\ref{thm-PI} as well as some
consequences of local Poincar\'e inequalities.
To start with, it is easily verified that if $X$ supports a semilocal 
\p-Poincar\'e inequality then it is connected, cf.\ the proof of 
\cite[Proposition~4.2]{BBbook}.
The following example shows that this conclusion fails if we replace the
semilocal assumption with a local one, even if $X$ is 
proper.

\begin{example}
Let $X$ be the union of two disjoint closed balls in $\R^n$,
which is proper and 
such that the Lebesgue measure is globally doubling and
supports a local $1$-Poincar\'e inequality on $X$.
However $X$  is not connected.
\end{example}

The above example also shows that the connectedness assumption in 
Theorem~\ref{thm-PI}
cannot be dropped, while the following example shows that neither can the properness
assumption. 

\begin{example}
Let
\[
  X=(\itoverline{B(0,2)} \setm B(0,1))
    \cup \{x=(x_1,x_2): 0 < |x| \le  2 \text{ and } x_1x_2 \ge 0\}
    \subset \R^2.
\]
Then $X$ is connected and the Lebesgue measure 
is globally doubling on $X$ and supports
a local $1$-Poincar\'e inequality.
However, $X$ is neither proper nor 
supports any 
semilocal Poincar\'e inequality.
\end{example}

The following is a partial result on the way to proving
Theorem~\ref{thm-PI}.

\begin{lem}  \label{lem-PI-small-r-OK}
If $X$ is proper and supports a local $(q,p)$-Poincar\'e inequality, then 
for every
ball $B_0$ there exist $\rp,C>0$ and $\la \ge 1$
such that the $(q,p)$-Poincar\'e inequality~\eqref{eq-def-local-PI}
holds for all balls 
$B=B(x,r)\subset B_0$ with $r\le \rp$.
\end{lem}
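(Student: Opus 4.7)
The plan is to use the properness of $X$ as a compactness tool to patch together the pointwise local $(q,p)$-Poincar\'e inequalities into one that holds uniformly for small balls inside $B_0$.

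First, for each $x \in \itoverline{B}_0$, the local $(q,p)$-Poincar\'e inequality furnishes some $r_x>0$ together with constants $C_x>0$ and $\la_x\ge1$ such that \eqref{eq-def-local-PI} holds (with these constants) for every ball $B\subset B(x,r_x)$. Since $X$ is proper, $\itoverline{B}_0$ is compact, and so we may extract a finite subcover
\[
\itoverline{B}_0 \subset \bigcup_{i=1}^N B\bigl(x_i,\tfrac12 r_{x_i}\bigr).
\]
Set
\[
\rp := \tfrac12 \min_{1\le i\le N} r_{x_i}, \qquad
\la := \max_{1\le i\le N} \la_{x_i}, \qquad
C := \max_{1\le i\le N} C_{x_i}.
\]

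Now let $B=B(x,r)\subset B_0$ with $r\le \rp$. Since $x\in B_0\subset\itoverline{B}_0$, there is some $i$ with $x\in B(x_i,\tfrac12 r_{x_i})$. Because $r\le \rp\le \tfrac12 r_{x_i}$, the triangle inequality gives $B=B(x,r)\subset B(x_i,r_{x_i})$. Therefore the local $(q,p)$-Poincar\'e inequality valid within $B(x_i,r_{x_i})$ applies to $B$ with constants $C_{x_i}\le C$ and $\la_{x_i}\le \la$, yielding \eqref{eq-def-local-PI} as required. (Note that $\la B$ may stick out of $B_0$, but this is unproblematic since the local Poincar\'e inequality within $B(x_i,r_{x_i})$ only requires integrability of $u$ on $\la B$, not containment in $B(x_i,r_{x_i})$.)

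The argument is essentially a standard finite-cover compactness procedure, so there is no genuine obstacle here; the only point requiring a moment's attention is that one should cover by the half-radius balls $B(x_i,\tfrac12 r_{x_i})$ rather than the full $B(x_i,r_{x_i})$, in order to leave room for a ball $B(x,r)$ centred near the boundary of one patch to still fit inside that patch. The real difficulty, namely handling balls $B\subset B_0$ whose radius is comparable to $r_0$, is deferred to Theorem~\ref{thm-PI} and presumably requires the connectedness of $X$ together with a chaining argument.
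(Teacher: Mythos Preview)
Your proof is correct and follows essentially the same compactness argument as the paper: cover $\itoverline{B}_0$ by finitely many balls where the local Poincar\'e inequality holds, then take the minimum radius and maximum constants. The only cosmetic difference is that the paper arranges the inequality to hold within $2B_j$ and sets $\rp=\min_j r_j$, whereas you cover by half-radius balls $B(x_i,\tfrac12 r_{x_i})$ and set $\rp=\tfrac12\min_i r_{x_i}$; these are equivalent ways of leaving room for a small ball near the edge of a patch.
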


\begin{proof}
As in the proof of Proposition~\ref{prop-semilocal-doubling},
the compact set $\itoverline{B}_0$ can be covered by finitely many balls
$B_j=B(x_j,r_j)$ so that the $(q,p)$-Poincar\'e inequality
holds within each $2B_j$, 
with constants $C_j$ and $\la_j$.
Letting
\[
\rp=\min_{j} r_j, \quad C=\max_{j} C_j \quad \text{and}
\quad  \la=\max_{j} \la_j,
\]
together with the fact that $B(x,r)\subset2B_j$ for some $j$,
concludes the proof.
\end{proof}

We shall now see which connectivity properties follow from the
local Poincar\'e inequality.

\begin{prop}   \label{prop-ex-curve-in-B0}
Assume that $X$ is locally compact
and that $\mu$ is locally doubling and 
supports a local \p-Poincar\'e inequality.
Then $X$ is locally quasiconvex {\rm(}and thus locally
rectifiably pathconnected\/{\rm)}, 
i.e.\ for every $x_0\in X$ there are $r_0, L>0$ 
{\rm(}depending on $x_0${\rm)} such that every pair of 
points $x,y\in B(x_0,r_0)$ can be connected by 
a curve of length at most $L d(x,y)$.

If $X$ is moreover proper and connected, then it
is semilocally quasiconvex, i.e.\ the above connectivity property holds
in every ball $B(x_0,r_0)$ {\rm(}with $L$ depending on it\/{\rm)}.
\end{prop}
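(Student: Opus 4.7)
The plan is to adapt the classical Semmes and Heinonen--Koskela quasiconvexity argument, localising it so that every application of doubling and Poincar\'e happens inside a single fixed ball where the local assumptions apply with uniform constants; the semilocal statement then follows by a compactness-chaining argument.

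First I would fix a working ball. Given $x_0 \in X$, use local compactness, local doubling and the local \p-Poincar\'e inequality to choose $r_0>0$ such that $\itoverline{B(x_0,20\la r_0)}$ is compact, $\mu$ is doubling within $B(x_0,20\la r_0)$, and \eqref{eq-def-local-PI} holds with uniform constants $C, \la$ for every ball $B\subset B(x_0,20\la r_0)$. For $\de>0$ and $x,y\in B_0:=B(x_0,r_0)$, introduce the $\de$-chain distance
\[
\rho_\de(x,y):=\inf \sum_{i=1}^{k} d(z_{i-1},z_i),
\]
where the infimum runs over chains $x=z_0,\ldots,z_k=y$ with each $z_i\in B(x_0,10\la r_0)$ and $d(z_{i-1},z_i)<\de$, and set $\rho(x,y):=\lim_{\de\to 0}\rho_\de(x,y)$. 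A standard Arzel\`a--Ascoli argument using local compactness shows that whenever $\rho(x,y)<\infty$ the infimum is attained by a rectifiable curve, and that $z\mapsto\min\{\rho(x,z),t\}$ admits $\chi_{\{\rho(x,\cdot)<t\}}$ as a \p-weak upper gradient for every $t>0$.

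The heart of the proof is to combine \eqref{eq-def-local-PI} with local doubling via the standard telescoping chain of balls (in the spirit of Haj\l asz--Koskela) to obtain the pointwise inequality
\[
|u(x)-u(y)| \le Cd(x,y)\bigl(M^{*}g^{p}(x)+M^{*}g^{p}(y)\bigr)^{1/p},
\]
valid for $x,y$ in a smaller ball, whenever $u$ has upper gradient $g$ supported in the working ball; here $M^{*}$ denotes the noncentred local maximal operator from Proposition~\ref{prop-maximal-fn}. I would apply this to $u_n(z):=\min\{\rho_{1/n}(x,z),T\}$ for $T\gg d(x,y)$: since the corresponding upper gradient is bounded by $1$, one has $M^{*}g^{p}\le 1$, giving $|u_n(x)-u_n(y)|\le C'd(x,y)$, and passing $n\to\infty$ yields $\rho(x,y)\le C'd(x,y)$. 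This furnishes the local quasiconvexity on a small enough subball of $B_0$.

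For the semilocal statement, assuming $X$ is proper and connected, I would cover the compact set $\itoverline{B(x_0,r_0)}$ by finitely many quasiconvex balls $V_1,\ldots,V_N$ supplied by the local statement, with a common quasiconvexity constant $L^{\sharp}$. Local quasiconvexity makes $X$ locally pathconnected, so by connectedness any two points of $B(x_0,r_0)$ are joined by some rectifiable curve, which can be replaced by a concatenation of short quasiconvex arcs through the $V_i$ to yield a curve of length at most a constant $L^{*}=L^{*}(x_0,r_0)$. Pairs with $d(x,y)\ge\de:=\tfrac12\min_i r_{V_i}$ then obey a linear bound with constant $L^{*}/\de$, while pairs with $d(x,y)<\de$ lie jointly in a common $V_i$ and are handled directly; the maximum of these constants gives the required $L$. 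The main obstacle is the localised telescoping step: one must check that the usual Haj\l asz--Koskela chain-of-balls argument for the pointwise Poincar\'e inequality really goes through using only local assumptions, by keeping every ball in the chain inside $B(x_0,20\la r_0)$, which is the reason for the generous factor $20\la$ built into the initial choice of $r_0$.
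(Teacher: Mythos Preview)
Your approach is essentially the same as the paper's: both define the Semmes $\de$-chain distance $\rho_\de$, bound $\rho_\de(x,y)$ by $C'd(x,y)$ via the telescoping/Poincar\'e argument (the paper does this directly in Lemma~\ref{lem-ex-curve-in-B0}, you package it as the Haj\l asz--Koskela pointwise maximal-function inequality, which is derived by the same telescoping), extract a curve by a compactness/Arzel\`a--Ascoli argument, and for the semilocal part chain through a finite cover of $\itoverline{B(x_0,r_0)}$. Two small slips worth tightening: your remark about $\chi_{\{\rho(x,\cdot)<t\}}$ being a \p-weak upper gradient is never actually used (you work with the constant upper gradient $1$ instead), and for pairs with $d(x,y)<\de=\tfrac12\min_i r_{V_i}$ to lie in a common $V_i$ you should cover $\itoverline{B(x_0,r_0)}$ by the half-balls $\tfrac12 V_i$ rather than by the $V_i$ themselves.
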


Local quasiconvexity and its behaviour under various transformations
of $X$ were considered by Buckley--Herron--Xie~\cite{BuckleyHerronXie}, cf.\ 
Section~2 and Proposition~4.2 therein. 
Note that if $X$ is connected and locally (rectifiably) pathconnected,
then it is (rectifiably) pathconnected, as the (rectifiably)
pathconnected components must be open.
In particular, local quasiconvexity and connectedness
imply that $X$
is rectifiably pathconnected, but not necessarily quasiconvex.

Later on it will be important 
to have the following more precise
version of the above connectivity result, which will also be used 
to deduce Proposition~\ref{prop-ex-curve-in-B0}.

\begin{lem}   \label{lem-ex-curve-in-B0}
Let $x,y\in X$ and assume that 
the \p-Poincar\'e inequality and the 
doubling property for $\mu$ hold
{\rm(}with constants $\CPI$ and $C_\mu${\rm)} within $B_0=B(x,2d(x,y))$
in the sense of Definitions~\ref{def-local-doubl-mu} and~\ref{def-PI}.
Let $\La=3C_\mu^3\CPI$. 
If the ball $\itoverline{\La B}_0$ is compact then $x$ and $y$  
can be connected 
by a
curve in $\itoverline{\La B}_0$,
of length at most $L  d(x,y)$, where $L=9\Lambda$. 
\end{lem}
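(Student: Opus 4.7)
The plan is to build a rectifiable curve from $x$ to $y$ in $\itoverline{\La B}_0$ as a limit of piecewise-geodesic chains refined dyadically, and then to invoke Arzel\`a--Ascoli on the compact set $\itoverline{\La B}_0$. Set $d=d(x,y)$ so that both $x,y\in B_0=B(x,2d)$. I will construct, inductively on $k\ge 0$, a chain
\[
x = z^{(k)}_0, z^{(k)}_1, \ldots, z^{(k)}_{N_k} = y
\]
of points in $\itoverline{\La B}_0$ at scale $r_k:=2^{-k}d$, meaning $d(z^{(k)}_{i-1},z^{(k)}_i)\le 3r_k$, with total length $\sum_i d(z^{(k)}_{i-1},z^{(k)}_i)$ bounded uniformly in $k$ by $Ld=9\La d$. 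The base case $k=0$ is the trivial chain $x,y$, which satisfies both bounds since $L\ge 1$.

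For the inductive step from $k$ to $k{+}1$, between each consecutive pair $z^{(k)}_{i-1}, z^{(k)}_i$ I insert a single midpoint $w_i\in \itoverline{\La B}_0$ satisfying $d(z^{(k)}_{i-1},w_i)\le 3r_{k+1}$ and $d(w_i,z^{(k)}_i)\le 3r_{k+1}$; this doubles the chain length while halving the scale and preserves the total length. The existence of such a midpoint $w_i$ inside $\itoverline{\La B}_0$ is the heart of the argument and is where the \p-Poincar\'e inequality enters. Suppose toward a contradiction that no such $w_i$ exists; then the balls $A=B(z^{(k)}_{i-1},3r_{k+1})$ and $B=B(z^{(k)}_i,3r_{k+1})$ are ``separated'' inside $\itoverline{\La B}_0$. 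Define a Lipschitz test function $u$ by $u(z)=\min\{1,\dist(z,A)/(3r_{k+1})\}$, truncated so that $u\equiv 1$ on $B$; then $u$ has upper gradient $g$ of size $\sim 1/r_k$ supported on the ``gap'' between $A$ and $B$. Apply the \p-Poincar\'e inequality within $B_0$ on a ball $\Bhat$ of radius $\sim r_k$ centered near the midpoint of $z^{(k)}_{i-1}z^{(k)}_i$: on one hand $A\cap\Bhat$ and $B\cap\Bhat$ both have measure comparable to $\mu(\Bhat)$ by doubling, forcing $\vint_{\Bhat}|u-u_{\Bhat}|\,d\mu$ to be bounded below by a positive absolute constant, while on the other hand $r_{\Bhat}(\vint_{\la\Bhat}g^p\,d\mu)^{1/p}$ can be made arbitrarily small (by doubling applied to the thin separating region). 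This contradiction produces the required $w_i$.

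The value $\La=3C_\mu^3\CPI$ is calibrated so that the auxiliary ball $\Bhat$ and its Poincar\'e enlargement $\la\Bhat$ fit inside $B_0$ for every $k$, and so that all inserted midpoints remain in $\itoverline{\La B}_0$: the factor $3$ corresponds to the geometric factor in the chain radius, $C_\mu^3$ to doubling applied when enlarging from scale $r_k$ up to $8r_k\sim\la r_k$, and $\CPI$ to the Poincar\'e constant itself. Telescoping gives $\sum_i d(z^{(k)}_{i-1},z^{(k)}_i)\le 9\La d$ uniformly in $k$. Parametrizing each chain proportionally to arclength on $[0,Ld]$ yields a sequence of maps $\gamma_k\colon[0,Ld]\to\itoverline{\La B}_0$ from $x$ to $y$ that are uniformly Lipschitz with constant $1$; since $\itoverline{\La B}_0$ is compact, Arzel\`a--Ascoli extracts a uniform limit $\gamma$, which is Lipschitz with constant at most $1$ and hence has length at most $Ld$.

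The main obstacle is establishing the existence of the midpoint $w_i$ in $\itoverline{\La B}_0$. Since $X$ is not assumed connected, no topological or intermediate-value argument is available, so the \p-Poincar\'e inequality must generate the required connectivity measure-theoretically at every dyadic scale. The delicate bookkeeping is that the Poincar\'e enlargement $\la\Bhat$ must fit inside $B_0$ at the coarsest scale (where $r_k\approx d$), while the separating ``gap'' must be forced to have strictly positive measure at the finest scales; balancing both constraints is what fixes the specific value $\La=3C_\mu^3\CPI$ and the length constant $L=9\La$.
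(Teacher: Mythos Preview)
Your midpoint-insertion scheme has a genuine gap at the step where you claim the \p-Poincar\'e inequality forces the existence of $w_i$. If no $w_i$ exists, you only know that $A=B(z^{(k)}_{i-1},3r_{k+1})$ and $B=B(z^{(k)}_i,3r_{k+1})$ are disjoint; you do \emph{not} get a ``thin separating region''. For your test function $u(z)=\min\{1,\dist(z,A)/(3r_{k+1})\}$ the upper gradient has magnitude $1/(3r_{k+1})\sim 1/r_k$ and is supported on the annulus $\{0<\dist(\cdot,A)<3r_{k+1}\}$, whose measure is comparable to $\mu(\Bhat)$ by doubling. Hence the right-hand side of the Poincar\'e inequality on $\Bhat$ is of order $\CPI\, r_{\Bhat}\cdot(1/r_k)\sim \CPI$, a fixed constant---not arbitrarily small. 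Moreover, $u\equiv1$ on $B$ is not automatic: disjointness of $A$ and $B$ does not force $\dist(z,A)\ge 3r_{k+1}$ for $z\in B$. So neither side of the intended contradiction behaves as you claim, and no contradiction results.

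There is a second, related problem with the length control. Your bookkeeping (``doubles the chain length while halving the scale and preserves the total length'') only works if the inserted $w_i$ satisfies $d(z^{(k)}_{i-1},w_i)+d(w_i,z^{(k)}_i)\le 3r_k$, i.e.\ if exact metric midpoints exist. Poincar\'e plus doubling gives quasiconvexity, not a geodesic space, so in general only $\theta$-approximate midpoints are available for some $\theta>\tfrac12$; iterating then makes the total length grow like $(2\theta)^k\to\infty$. Thus the bisection approach cannot produce a uniform length bound without an additional idea.

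The paper's argument avoids both issues by using Semmes's $\eps$-chain function $\rho_\eps(z)=\inf\sum d(x_{i-1},x_i)$ over $\eps$-chains from $x$ to $z$. This function is $1$-Lipschitz with upper gradient $1$, so a telescoping application of the Poincar\'e inequality to $\rho_\eps$ on the balls $B_0$ and $B(y,2^{-j}d(x,y))$ yields $\rho_\eps(y)\le 2\La d(x,y)$ directly and uniformly in $\eps$. The length bound thus comes out of a single Poincar\'e estimate, not from an inductive bisection, and the curve is then extracted from the resulting $\eps$-chains via a diagonal argument in the compact set $\itoverline{\La B}_0$.
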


Note that, as in \cite[Theorem~4.32]{BBbook}, the constants 
$\La$ and $L$ are independent of the dilation constant $\la$ 
in the \p-Poincar\'e inequality.

\begin{proof}
Let $\la$ be the dilation constant in the \p-Poincar\'e inequality
within $B_0$.
Following Semmes's chaining argument, define
for $\eps>0$ and $z\in \la B_0$, 
\[
\rho_\eps(z)=\inf \sum_{i=1}^m d(x_{i-1},x_i),
\]
where
the infimum is taken over all collections $\{x_i\}_{i=0}^m\subset X$ 
such that
$x_0=x$, $x_m=z$ and $d(x_{i-1},x_i)<\eps$ for all $i=1,2,\ldots,m$.
Should there be no such chain, we let 
$\rho_\eps(z)=10\La d(x,y)$.
Then it is easily verified that
$\rho_\eps$ is locally 1-Lipschitz and has 1 as an upper gradient.

Since the \p-Poincar\'e inequality and the 
doubling property for $\mu$ hold for all the balls $B_0=B(x,2d(x,y))$ 
and $B_j=B(y,2^{-j}d(x,y))\subset B_0$, with $j=1,2,\ldots$,
a standard telescoping argument 
shows that
\begin{align} \label{eq-std-telescope}
|\rho_\eps(y)-(\rho_\eps)_{B_0}|
&
\le \sum_{j=0}^\infty |(\rho_\eps)_{B_{j+1}}-(\rho_\eps)_{B_j}|
\nonumber
\\ &
\le \sum_{j=0}^\infty \vint_{B_{j+1}}
        |\rho_\eps-(\rho_\eps)_{B_j}|\,d\mu \\
&\le \sum_{j=0}^\infty C_\mu^3 \vint_{B_j}
        |\rho_\eps-(\rho_\eps)_{B_j}|\,d\mu 
\nonumber \\ & \le C_\mu^3 \CPI \sum_{j=0}^\infty r_{B_j} 
        \biggl( \vint_{\la B_j} 1^p \,d\mu \biggr)^{1/p}
\nonumber \\ &= \La d(x,y). \nonumber
\end{align}
A similar estimate with $\rho_\eps(x)=0$ then yields that
\[
\rho_\eps(y) = |\rho_\eps(y)-\rho_\eps(x)| \le 2\La d(x,y) < 10\La d(x,y).
\]
In particular, for each $\eps_n = 3\cdot 2^{-n}\La d(x,y)$, $n=1,2,\ldots$,
there is a chain
$x=x_0^n, x_1^n, \ldots, x_{M_n}^n=y$ such that $d(x_{i-1},x_i)\le\eps_n$
for all $i$ and
\[
\sum_{i=1}^{M_n} d(x_{i-1}^n,x_i^n)\le 3\La d(x,y). 
\]
Moreover, as $d(x,x_i^n)\le 2\La d(x,y)=\La r_{B_0}$ 
or $d(y,x_i^n)\le \La d(x,y)$,
we conclude that all $x_i^n$ belong to the compact set $\itoverline{\La B}_0$.

Using \cite[Lemma~4.34]{BBbook}, we can find a subchain
\[
x=\xh_0^n, \xh_1^n, \ldots, \xh_{m_n}^n=y,
\quad \text{satisfying }\eps_n \le d(\xh_{i-1}^n,\xh_i^n)\le 3\eps_n,
\]
which implies that $m_n\le 3\La d(x,y)/\eps_n = 2^n$. 
Letting $S_n=\{2^{-n}i: i=0,1,\ldots,2^n\}$, 
the function $\gamma_n:S_n\to \itoverline{\La B}_0$,
defined by $\gamma_n(2^{-n}i)=\xh^n_{\min\{i,m_n\}}$, is easily shown to be
$9\La d(x,y)$-Lipschitz.

Using a diagonal argument, we can from the sequence 
$\{\gamma_n\}_{n=1}^\infty$ choose 
a subsequence, which converges on $S=\bigcup_{n=1}^\infty S_n$. 
Since each $\gamma_n$ is $9\La d(x,y)$-Lipschitz, so is the limiting
function $\gamma:S\to \itoverline{\La B}_0$.
Finally, $\gamma$ extends to a 
$9\La d(x,y)$-Lipschitz function on $[0,1]$, 
which after reparameterization provides us with the desired curve.
\end{proof}

\begin{proof}[Proof of Proposition~\ref{prop-ex-curve-in-B0}]
Let $x_0\in X$ and find a ball $B_0'$ 
centred at $x_0$
so that $\clBzeroprime$
is compact  and the \p-Poincar\'e inequality and the 
doubling property for $\mu$ hold within $B_0'$,
with constants $\CPI$  and $C_\mu$.
Let $\La=3C_\mu^3\CPI$ and $B_0=\La^{-1}B_0'$.
Now, if  $x,y\in \tfrac15 B_0$ then $B(x,2d(x,y))\subset B_0$ and 
$\itoverline{\La B}_0$ is compact.
Hence Lemma~\ref{lem-ex-curve-in-B0} shows
the existence of a connecting curve of length at most 
$9\La d(x,y)$.
Thus $X$ is locally quasiconvex, 
which proves the first part of the proposition.

Now assume that $X$ is in addition proper 
and connected. Let $B_0=B(x_0,r_0)$ 
be arbitrary.  
By Proposition~\ref{prop-semilocal-doubling}, $\mu$
is semilocally doubling.
Furthermore, Lemma~\ref{lem-PI-small-r-OK} implies that 
for some $0<r'\le r_0$, the \p-Poincar\'e 
inequality~\eqref{eq-def-local-PI-intro} holds for all $B=B(x,r)\subset 5B_0$
with $0<r\le 2r'$.

Since $\overline{5\La B}_0$ is compact, by the properness of $X$,
Lemma~\ref{lem-ex-curve-in-B0} 
yields that every pair of points  $x,y\in\itoverline{B}_0$ 
with $d(x,y)\le r'$,
can be connected by a curve of length at most $L'd(x,y)$, 
where $L'$ depends only on the doubling and Poincar\'e constants 
provided for $5B_0$ by 
Proposition~\ref{prop-semilocal-doubling-intro} and 
Lemma~\ref{lem-PI-small-r-OK}.

By compactness, 
$\itoverline{B}_0$ can be covered by finitely many
balls $B(x_k,r')$ with 
$x_k\in\itoverline{B}_0$, $k=1,2,\ldots, N$.
As $X$ is connected
and locally quasiconvex, it is rectifiably connected.
Hence, there is for each pair $x_j,x_k$, $j\ne k$,
a rectifiable curve $\ga_{j,k}\subset X$
connecting $x_j$ to $x_k$.
Since there are only finitely many such pairs, it follows that
$M:=\sup_{j,k} l_{\ga_{j,k}} < \infty$.

Finally, let $x,y \in B_0$ be arbitrary.
If $d(x,y) \le r'$, then we already know
that $x$ and $y$ can be connected by a curve of length 
at most $L'd(x,y)$.
If $d(x,y) > r'$, then $x$ and $y$ can be connected to some $x_j$ and 
$x_k$, respectively,
by curves of lengths at most $L'r'$.
Adding $\ga_{j,k}$ to these curves produces a curve from $x$ to $y$
of length at most $2L'r' + M < (2L'+M/r')d(x,y)$.
\end{proof}

The local quasiconvexity proved in 
Proposition~\ref{prop-ex-curve-in-B0} can be further bootstrapped 
by the following result.

\begin{lem}   \label{lem-rect-components}
Let $B_0$ be a ball 
such that $\mu$ is locally doubling 
and supports a local \p-Poincar\'e inequality 
on $B_0$ {\rm(}as the underlying space\/{\rm)}.
Assume that the \p-Poincar\'e inequality
\eqref{eq-def-local-PI-intro} holds for $B_0$
\textup{(}in place of $B$\textup{)}
with dilation constant $\la\ge1$ and that $\la B_0$ is locally compact.

Then $B_0$ is rectifiably connected within $\la B_0$, i.e.\
any two points $x,y\in B_0$ can be connected by a 
rectifiable 
curve
lying within $\la B_0$.
\end{lem}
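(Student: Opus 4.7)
The plan is to argue by contradiction. Suppose that $x,y\in B_0$ cannot be joined by a rectifiable curve in $\la B_0$; I will construct a Borel test function $u:X\to\{0,1\}$ together with a \p-weak upper gradient $g$ vanishing on $\la B_0$, yet with $\vint_{B_0}|u-u_{B_0}|\,d\mu>0$, contradicting the Poincar\'e inequality assumed for $B_0$.

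The first step is structural. Since $B_0$ is open in the locally compact $\la B_0$, $B_0$ is itself locally compact; together with the inherited local doubling and local \p-Poincar\'e inequality on $B_0$, Proposition~\ref{prop-ex-curve-in-B0} applied to $B_0$ (as its own metric space) gives that $B_0$ is locally rectifiably quasiconvex. In particular, the connected components of $B_0$ are open (in $B_0$, hence Borel in $X$) and rectifiably path-connected by curves in $B_0\subset\la B_0$.

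Next, introduce on $\la B_0$ the equivalence relation $p\sim q$ iff $p$ and $q$ are joined by a rectifiable curve in $\la B_0$. Let $\hat A$ denote the $\sim$-class of $x$, set $A:=\hat A\cap B_0$ and $B:=B_0\setminus A$; by the contradiction hypothesis, $y\in B$. Local rectifiable path-connectedness of $B_0$ implies that both $A$ and $B$ are open in $B_0$, and local doubling gives $\mu(A)\mu(B)>0$. Define $u(z):=1$ if $z\in\la B_0$ lies in some $\sim$-class meeting $B$, and $u(z):=0$ otherwise. Then $u$ is Borel, $u=\chi_B$ on $B_0$, and by construction $u$ is constant on every $\sim$-class of $\la B_0$.

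Put $g(z):=0$ for $z\in\la B_0$ and $g(z):=+\infty$ for $z\in X\setminus\la B_0$. If a rectifiable curve $\gamma$ lies entirely in $\la B_0$, its endpoints are $\sim$-equivalent, so $u$ is constant on $\gamma$ and the upper gradient inequality is trivial; if $\gamma$ spends positive arc length outside $\la B_0$ then $\int_\gamma g=+\infty$. The remaining ``grazing'' curves—those touching $\partial\la B_0$ only on a one-dimensional null set while jumping between $\sim$-classes—form a family of \p-modulus zero, which is where the local compactness of $\la B_0$ is essential (it allows one to produce an admissible density concentrated near $\partial\la B_0$ that forces $\int_\gamma\rho=\infty$ along every such curve). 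Hence $g$ is a \p-weak upper gradient of $u$, and the Poincar\'e inequality for $B_0$ forces
\[
0<\frac{2\mu(A)\mu(B)}{\mu(B_0)^2}
=\vint_{B_0}|u-u_{B_0}|\,d\mu
\le Cr_{B_0}\biggl(\vint_{\la B_0}g^p\,d\mu\biggr)^{1/p}=0,
\]
the desired contradiction. The principal obstacle is the \p-modulus estimate for the grazing-curve family; an alternative route that sidesteps this issue is to mimic the Semmes chaining argument of Lemma~\ref{lem-ex-curve-in-B0} with chains restricted to $\la B_0$, controlling the chain length via the Poincar\'e inequality for $B_0$ together with a telescoping estimate based on the local Poincar\'e inequality around $x$ and $y$, and extracting the curve by the diagonal argument.
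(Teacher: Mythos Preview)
Your overall strategy---partition $\la B_0$ into rectifiable components, test the Poincar\'e inequality on $B_0$ with a characteristic function, and derive a contradiction---is exactly what the paper does. The difference is that you have created an artificial obstacle and then left it unresolved.

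The Poincar\'e inequality hypothesized for $B_0$ requires only that $u$ be integrable on $\la B_0$ and that $g$ be an upper gradient of $u$ \emph{on $\la B_0$}; curves that leave $\la B_0$ are irrelevant. The paper therefore simply sets $u=\chi_G$ on $\la B_0$, where $G$ is a single rectifiable component of $\la B_0$, and observes that $g\equiv 0$ is an upper gradient of $u$ \emph{in the open set $\la B_0$}: any rectifiable curve in $\la B_0$ lies in a single component, so $u$ is constant along it. There is no ``grazing-curve'' family to worry about, and the modulus estimate you flag as the principal obstacle never arises. Your construction with $g=\infty$ on $X\setminus\la B_0$ and the ensuing \p-modulus claim is both unnecessary and, as written, a genuine gap: you assert it without proof, and it is not clear how local compactness of $\la B_0$ alone would produce the admissible density you describe.

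Two smaller points. First, your $u$ is more elaborate than needed (a union of components meeting $B$); the single component $\hat A$ suffices, and using it simplifies the measurability question. Second, you assert ``$u$ is Borel'' without justification. The paper handles this by citing J\"arvenp\"a\"a--J\"arvenp\"a\"a--Rogovin--Rogovin--Shanmugalingam~\cite{JJRRS} for the measurability of rectifiable components in locally compact spaces; this is precisely where the local compactness of $\la B_0$ enters. With these adjustments your argument becomes the paper's.
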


\begin{proof}
Divide $\la B_0$ into its rectifiable components, i.e.\ $x,y \in \la B_0$
belong to the same rectifiable component if there is a 
rectifiable curve
within $\la B_0$ from $x$ to $y$.
For each $x\in\la B_0$, let $G_x$ denote the 
rectifiable component containing 
$x$, which is measurable by 
J\"arvenp\"a\"a--J\"arvenp\"a\"a--Rogovin--Rogovin--%
Shan\-mu\-ga\-lin\-gam~\cite[Corollary~1.9 and Remark~3.1]{JJRRS},
since $\la B_0$ is locally compact.
The local assumptions on $B_0$, together with 
Proposition~\ref{prop-ex-curve-in-B0}, imply that
the sets $G_x\cap B_0$ are open for all $x$.

Let $G$ be such a rectifiable component intersecting $B_0$.
We shall show that $B_0 \subset G$.
Assume on the contrary that $B_0\setm G \ne \emptyset$.
Since all $G_x\cap B_0$ are open, 
so is $B_0\setm G=\bigcup_{x\notin G} (G_x\cap B_0)$.
Let $u=\chi_{G}$, which has $g \equiv 0$ as an upper gradient in 
the open set $\la B_0$
as there are no rectifiable
curves in $\la B_0$ 
from $G$ to 
$\la B_0 \setm G$.
Since both $B_0 \cap G$ and $B_0\setm G$ 
are nonempty and open, they both have positive measure.
Hence, by the \p-Poincar\'e inequality for $B_0$, 
\begin{equation} \label{eq-PI-contradiction}
     0 <    \vint_{B_0} |u-u_{B_0}| \,d\mu
        \le C r_{B_0} \biggl( \vint_{\lambda B_0} g^p  \,d\mu \biggr)^{1/p}
        = 0,
\end{equation}
a contradiction.
\end{proof}

The following lemma makes it possible to lift the Poincar\'e
inequality from  small to large sets.

\begin{lem}  \label{lem-iteration-with-Q}
Let $1 \le q < \infty$ and $A,E\subset X$ be such that 
$\mu(A\cap E)\ge \theta \mu(E)$
for some $\theta>0$.
Also assume that for some $Q\ge0$ and a measurable function $u$,
\[ 
\|u-u_A\|_{L^q(A)} \le Q
\quad \text{and} \quad
\|u-u_E\|_{L^q(E)} \le Q.
\] 
Then
\[
\|u-u_{A\cup E}\|_{L^q(A\cup E)} \le 4(1+\theta^{-1/q})Q.
\]
\end{lem}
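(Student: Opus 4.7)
The plan is a two-stage application of the triangle inequality in $L^q(A\cup E)$, combined with the standard trick that the mean $u_{A\cup E}$ is almost a best $L^q$-approximation by constants.

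First I would observe that for any constant $c \in \R$ one has
\[
 \|u-u_{A\cup E}\|_{L^q(A\cup E)} \le 2\|u-c\|_{L^q(A\cup E)},
\]
because $|c-u_{A\cup E}| = |\vint_{A\cup E}(c-u)\,d\mu|$ and H\"older's inequality give $\|c - u_{A\cup E}\|_{L^q(A\cup E)} \le \|u-c\|_{L^q(A\cup E)}$, so the triangle inequality produces the factor $2$. I would then choose $c=u_A$ and split $A\cup E$ into the disjoint pieces $A$ and $E\setm A$:
\[
 \|u-u_A\|_{L^q(A\cup E)}^q = \|u-u_A\|_{L^q(A)}^q + \|u-u_A\|_{L^q(E\setm A)}^q
\le Q^q + \|u-u_A\|_{L^q(E\setm A)}^q.
\]

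On the second piece I would insert $u_E$ via the triangle inequality, using $\mu(E\setm A)\le \mu(E)$:
\[
 \|u-u_A\|_{L^q(E\setm A)} \le \|u-u_E\|_{L^q(E)} + |u_E-u_A|\,\mu(E)^{1/q} \le Q + |u_E-u_A|\,\mu(E)^{1/q}.
\]
The key step, and the only place the hypothesis $\mu(A\cap E)\ge\theta\mu(E)$ is actually needed, is to control $|u_E-u_A|$. For this I would integrate $|u_E-u_A|^q$ over $A\cap E$, where both averages are close to $u$: the elementary inequality $|a-b|^q\le 2^{q-1}(|a|^q+|b|^q)$ yields
\[
 |u_E-u_A|^q \mu(A\cap E) \le 2^{q-1}\bigl(\|u-u_A\|_{L^q(A)}^q + \|u-u_E\|_{L^q(E)}^q\bigr) \le 2^q Q^q,
\]
and hence $|u_E-u_A|\,\mu(E)^{1/q} \le 2Q\theta^{-1/q}$.

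Collecting everything, $\|u-u_A\|_{L^q(E\setm A)} \le Q(1+2\theta^{-1/q})$ and so
\[
 \|u-u_A\|_{L^q(A\cup E)} \le Q + Q(1+2\theta^{-1/q}) = 2Q(1+\theta^{-1/q}),
\]
where I used $(a^q+b^q)^{1/q}\le a+b$. Applying the initial factor of $2$ gives the desired constant $4(1+\theta^{-1/q})$. No step should be an obstacle; the only subtlety is keeping the powers of $q$ in order when passing between the $L^q$-norm and the $q$th power of $|u_E-u_A|$, so I would be careful to track the factor $\mu(A\cap E)^{1/q}\ge \theta^{1/q}\mu(E)^{1/q}$ explicitly.
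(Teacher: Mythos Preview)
Your proof is correct and follows essentially the same route as the paper's: bound $|u_A-u_E|$ by comparing both means to $u$ on $A\cap E$, estimate $\|u-u_A\|_{L^q(A\cup E)}$ by splitting into $A$ and $E$ and inserting $u_E$, then pass from $u_A$ to $u_{A\cup E}$ at the cost of a factor~$2$. The only cosmetic differences are that the paper obtains $|u_A-u_E|\le 2Q/\mu(A\cap E)^{1/q}$ via the triangle inequality in $L^q(A\cap E)$ rather than the convexity bound $|a-b|^q\le 2^{q-1}(|a|^q+|b|^q)$, and that it cites \cite[Lemma~4.17]{BBbook} for the final factor~$2$ instead of writing out the H\"older-plus-triangle argument you gave.
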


\begin{proof}
To start with, we have by the triangle inequality,
\begin{align*}
|u_A-u_E| &= 
\frac{\|u_A - u_E\|_{L^q(A \cap E)}}{\mu(A\cap E)^{1/q}}
\\
&\le \frac{ \|u-u_A\|_{L^q(A)} + \|u-u_E\|_{L^q(E)}}{\mu(A\cap E)^{1/q}} 
\le \frac{2Q}{\mu(A\cap E)^{1/q}}.
\end{align*}
This yields
\begin{align*}
\|u-u_A\|_{L^q(A \cup E)} &\le \|u-u_A\|_{L^q(A)} + \|u-u_E\|_{L^q(E)}
+ \mu(E)^{1/q} |u_A-u_E| \\
& \le 2Q + 2Q \biggl( \frac{\mu(E)}{\mu(A\cap E)}\biggr)^{1/q}
  \le 2 (1+ \theta^{-1/q}) Q.
\end{align*}
Finally, Lemma~4.17 in \cite{BBbook}
allows us to replace $u_A$ on the left-hand
side by $u_{A\cup E}$, at the cost of an extra factor 2 
on the right-hand side.
\end{proof}

We are now ready to conclude the proof of
the semilocal $(q,p)$-Poincar\'e inequality.

\begin{proof}[Proof of Theorem~\ref{thm-PI}]
Let $B_0=B(x_0,r_0)$ be fixed. 
As $X$ is proper and connected,
Proposition~\ref{prop-ex-curve-in-B0}
shows that there is $\sigma \ge 1$
such that every pair of  points in $\itoverline{B}_0$ 
can be connected by a rectifiable curve within $\sigma B_0$.
By Lemma~\ref{lem-PI-small-r-OK}, there exist 
$\rp, C>0$ and $\la\ge 1$ such that
the $(q,p)$-Poincar\'e inequality~\eqref{eq-def-local-PI}
holds for every ball
$B$ with radius $r_B\le \rp$ and centre in $\overline{\sigma B}_0$.
By decreasing $\rp$, if necessary, we may assume that $\la \rp \le r_0$.

Next, let $B\subset B_0$ be an arbitrary ball.
If $r_B\le \rp$, then the 
$(q,p)$-Poincar\'e inequality~\eqref{eq-def-local-PI}
holds for $B$.
Assume therefore that $r_B>\rp$. 
By compactness, $\itoverline{B}_0$ can be covered by finitely many balls $B'_j$,
$j=1,2,\ldots,M$, with radius $\rp$ and 
centres $x_j\in \itoverline{B}_0$.
Proposition~\ref{prop-ex-curve-in-B0} provides us for each $j$
with a rectifiable curve $\ga_j$ in $\sigma B_0$ connecting $x_j$ to $x_{j+1}$.
Following this curve, we define balls 
\[
B_{j,k}=B(\ga_j(k\rp/2),\rp), \quad k=1,2,\ldots  \le \frac{2l_{\ga_j}}{r'}.
\]
Note that $\tfrac12 B_{j,k+1}\subset B_{j,k}$.
Add all these balls to the chain $\{B'_j\}_{j=1}^M$, 
in between $B'_j$ and $B'_{j+1}$,
and renumber the sequence as $B_1, B_2, \ldots, B_N$.
Note that all the balls $B_j$ have the same radius $\rp$ and that
$\tfrac12 B_{j+1} \subset B_j\cap B_{j+1}$.

Next, let $A_j=\bigcup_{i=1}^j B_i$.
Note that $B_0\subset A_N$.
Then $A_j\cap B_{j+1} \supset \tfrac12 B_{j+1}$ and the semilocal
doubling property of $\mu$, provided by
Proposition~\ref{prop-semilocal-doubling-intro}, implies
that for some $\theta>0$ (depending on $2 \sigma B_0$),
\[
\mu(A_j\cap B_{j+1}) \ge \mu\bigl(\tfrac12 B_{j+1}\bigr) 
\ge \theta \mu(B_{j+1}).
\]
Since all the balls $B_j$ have radius $\rp$, the 
$(q,p)$-Poincar\'e inequality~\eqref{eq-def-local-PI} holds for them, 
and we have 
\begin{align*}
 \biggl( \int_{B_j} |u-u_{B_j}|^q \,d\mu \biggr)^{1/q} 
  & \le C\rp \mu(B_j)^{1/q}  \biggl( \vint_{\la B_j} g_u^p\,d\mu \biggr)^{1/p}\\ 
&\le C'\rp \mu(2\sigma B_0)^{1/q}\biggl( \vint_{2\sigma B_0} g_u^p\,d\mu \biggr)^{1/p}
=: Q,
\end{align*}
where we have used the semilocal doubling property, together with
the fact that $\la B_j\subset 2\sigma B_0$ and that $\rp$ and $r_0$
are fixed.
Lemma~\ref{lem-iteration-with-Q} with $A=A_1$ and $E=B_2$ now yields
\[
 \biggl( \int_{A_2} |u-u_{A_2}|^q \,d\mu \biggr)^{1/q} 
\le 4(1+\theta^{-1/q}) 
Q=:\ga Q.
\]
Another application of Lemma~\ref{lem-iteration-with-Q} with 
$A=A_2$ and $E=B_3$ then gives
\[
\biggl( \int_{A_3} |u-u_{A_3}|^q \,d\mu \biggr)^{1/q} 
\le \ga^2 Q.
\]
Continuing in this way along the whole sequence $\{B_j\}_{j=1}^N$, 
we can  conclude after finitely many iterations that 
\begin{align*}
 \biggl( \int_B |u-u_{A_N}|^q \,d\mu \biggr)^{1/q} 
&\le \biggl( \int_{A_N} |u-u_{A_N}|^q \,d\mu \biggr)^{1/q} \\
&\le \ga^N Q  
   = C'' \rp \mu(2\sigma B_0)^{1/q} 
    \biggl( \vint_{2\sigma B_0} g_u^p\,d\mu \biggr)^{1/p}.
\end{align*}
Let $\la'=3\sigma r_0/r'$.
Since $r_B\ge \rp$, the measures of the balls
$B \subset 2\sigma B_0 \subset \la' B$ 
are all comparable, due to the semilocal doubling property of $\mu$,
and thus
\[
 \biggl( \vint_B |u-u_{A_N}|^q \,d\mu \biggr)^{1/q} 
\le  C''' r_B
    \biggl( \vint_{\la' B} g_u^p\,d\mu \biggr)^{1/p}.
\]
Finally, \cite[Lemma~4.17]{BBbook} allows us to replace
$u_{A_N}$ by $u_B$ on the left-hand side
(at the cost of an extra factor 2 on the right-hand side),
which completes the proof.
\end{proof}

We will also need the following lemma, which shows
the reverse doubling condition under suitable local assumptions.

\begin{lem} \label{lem-rev-doubl}
Assume that 
the doubling property for $\mu$ holds within $B_0$ in the sense of
Definition~\ref{def-local-doubl-mu}, 
with doubling constant $C_\mu$,
and that 
the \p-Poincar\'e inequality
\eqref{eq-def-local-PI-intro} holds for $B_0$ 
\textup{(}in place of $B$\textup{)}.
Let $B \subset 2B \subset B_0$ be a ball with 
$r_B < \frac{2}{3} \diam B_0$.
Then there is $\theta<1$, only depending on $C_\mu$,
such that
\[
      \mu\bigl(\tfrac{1}{2}B\bigr) \le \theta \mu(B).
\]
\end{lem}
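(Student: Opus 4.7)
My plan is to locate a point $y\in B$ at distance exactly $\tfrac{3}{4}r_B$ from $x_B$ and then to compare $\mu(B)$ with the measure of the small ball $B(y,\tfrac{1}{8}r_B)$, which will automatically lie in $B\setm\tfrac12 B$. The doubling property within $B_0$ will then close the argument.

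To find such a $y$, observe that $r_B<\tfrac23\diam B_0$ gives $\diam B_0>\tfrac32 r_B$, so there exist $p,q\in B_0$ with $d(p,q)>\tfrac32 r_B$, and the triangle inequality forces at least one of them, say $p$, to satisfy $d(x_B,p)>\tfrac34 r_B$. Set $E=\{z\in X:d(x_B,z)<\tfrac34 r_B\}$ and $u=\chi_E$. Since $x_B\in E$ and $p\in X\setm E$ are both interior points of the open ball $B_0$, the sets $E\cap B_0$ and $B_0\setm E$ each contain an open neighborhood of positive measure, so $u$ is not $\mu$-a.e.\ constant on $B_0$. If the sphere $\{z\in\la B_0:d(x_B,z)=\tfrac34 r_B\}$ were empty, then continuity of $d(x_B,\cdot)$ together with the intermediate value theorem would forbid any rectifiable curve in $\la B_0$ from crossing from $E$ to its complement, so $g\equiv0$ would be an upper gradient of $u$ on $\la B_0$. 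The \p-Poincar\'e inequality for $B_0$ would then yield
\[
\vint_{B_0}|u-u_{B_0}|\,d\mu\le Cr_{B_0}\biggl(\vint_{\la B_0}0^p\,d\mu\biggr)^{1/p}=0,
\]
contradicting the non-constancy of $u$ above. Hence a point $y\in\la B_0$ with $d(x_B,y)=\tfrac34 r_B<r_B$ exists, and this $y$ automatically lies in $B$.

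For $z\in B(y,\tfrac{1}{8}r_B)$ the triangle inequality gives $\tfrac58 r_B\le d(x_B,z)\le\tfrac78 r_B$, so $B(y,\tfrac{1}{8}r_B)\subset B\setm\tfrac12 B$. Moreover $B(y,\tfrac78 r_B)\subset B(x_B,\tfrac{13}{8}r_B)\subset 2B\subset B_0$, and the smaller concentric balls $B(y,2^{-k}r_B)$ for $k=1,2,3$ also lie in $2B\subset B_0$. Iterating the doubling within $B_0$ at most four times thus gives
\[
\mu(B)\le \mu(B(y,\tfrac74 r_B))\le C_\mu^{4}\mu(B(y,\tfrac{1}{8}r_B))\le C_\mu^{4}\mu(B\setm\tfrac12 B),
\]
so $\mu(\tfrac12 B)\le(1-C_\mu^{-4})\mu(B)$, and $\theta:=1-C_\mu^{-4}$ depends only on $C_\mu$. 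The conceptually decisive step is the Poincar\'e argument, since the existence of $y$ on the sphere is a connectivity statement that cannot follow from doubling alone; the remaining doubling bookkeeping is routine, but the assumption $2B\subset B_0$ is exactly what ensures that all the intermediate balls stay inside $B_0$ so that doubling within $B_0$ can be applied.
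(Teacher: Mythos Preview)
Your proof is correct and follows essentially the same strategy as the paper's: use the Poincar\'e inequality for $B_0$ to force the existence of a point $y$ at distance exactly $\tfrac34 r_B$ from the centre, then place a small ball at $y$ inside $B\setminus\tfrac12 B$ and compare measures via doubling within $B_0$. The only differences are cosmetic: the paper uses the ball $B(y,\tfrac14 r_B)$ and the chain $\mu(\tfrac12 B)\le\mu(B(y,2r_B))\le C_\mu^3\mu(B(y,\tfrac14 r_B))$, obtaining $\theta=C_\mu^3/(C_\mu^3+1)$, whereas you use $B(y,\tfrac18 r_B)$ and bound $\mu(B)$ from above, arriving at the slightly weaker $\theta=1-C_\mu^{-4}$.
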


\begin{proof}
Assume that $B=B(x,r)$.
If there were no $y$ such that $d(x,y)=\frac{3}{4}r$, then
zero would be an upper gradient of $\chi_{\frac{3}{4}B}$,
which
 would violate the \p-Poincar\'e inequality for the ball $B_0$,
as in \eqref{eq-PI-contradiction},
since $\diam \overline{\tfrac{3}{4}B} \le \tfrac{3}{2}r < \diam B_0$ and thus
$B_0 \setm \overline{\tfrac{3}{4}B} \ne \emptyset$.
Hence there is $y$ such that $d(x,y)=\frac{3}{4}r$.

As $B(y,r) \subset 2B \subset B_0$, we get 
from the doubling property within $B_0$ that
\[
    \mu\bigl(\tfrac{1}{2}B\bigr) \le \mu(B(y,2r)) 
    \le C_\mu^3 \mu\bigl(B\bigl(y,\tfrac{1}{4}r\bigr)\bigr)
\]
and thus
\[
     \mu(B) \ge \mu\bigl(\tfrac{1}{2}B\bigr) + \mu\bigl(B\bigl(y,\tfrac{1}{4}r\bigr)\bigr)
          \ge (1+C_\mu^{-3}) \mu\bigl(\tfrac{1}{2}B\bigr).
          \qedhere
\]
\end{proof}

\section{Better Poincar\'e inequalities}
\label{sect-better-PI}

There are two types of better Poincar\'e inequalities.
The first type of result is the Sobolev--Poincar\'e inequality,
due to Haj\l asz--Koskela~\cite{HaKo-CR}, 
\cite{HaKo}, which 
strengthens the left-hand side of the inequality.
The arguments in \cite{HaKo-CR} and \cite{HaKo} also show how,
in sufficiently nice spaces such as $\R^n$, the dilation constant $\la>1$
in the Poincar\'e inequality can be improved to $\la=1$.
These results were originally proved under global doubling and Poincar\'e 
assumptions,
but since all the considerations in the proof are of local nature,
they
can also be obtained 
under (semi)local assumptions in the following form. 

\begin{thm}   \label{thm-(q,p)-PI}
\textup{(Local Sobolev--Poincar\'e inequality)}
Let $B_0$ be a ball such that the \p-Poincar\'e inequality\/ 
\textup{(}with dilation constant $\la$\textup{)} and the
doubling property for $\mu$ hold within $B_0$ in the sense of
Definitions~\ref{def-local-doubl-mu} and~\ref{def-PI}.
Assume that the dimension condition
\begin{equation}    \label{eq-local-dim-cond}
\frac{\mu(B')}{\mu(B)} \ge C_0 \Bigl( \frac{r_{B'}}{r_B} \Bigr)^s 
\end{equation}
holds for some $C_0,s>0$ and all balls $X\ne B'\subset B \subset B_0$.

Then there exists $C$, depending only on $p$, the doubling 
constant and both constants in the \p-Poincar\'e inequality within $B_0$, 
such that for all balls $B$ with $5\la B \subset B_0$, 
all integrable functions $u$ on $2\la B$, and all 
\p-weak upper gradients $g$ of~$u$, 
\begin{equation}    \label{eq-(q,p-PI-2la}
        \biggl(\vint_{B} |u-u_B|^{q} \,d\mu\biggr)^{1/q}
        \le C r_B \biggl( \vint_{2\la  B} g^{p} \,d\mu \biggr)^{1/p},
\end{equation}
where $q=p^*:=sp/(s-p)>p$ if $p < s$ 
while $q<\infty$ is arbitrary when $p\ge s$
\textup{(}in which case $C$ depends also on $q$\textup{)}. 

If $L$ is a local quasiconvexity constant for $B_0$, in the sense that
every pair of points $x,y\in B_0$ can be connected {\rm(}in $X${\rm)}
by a curve of length at most $Ld(x,y)$, then~\eqref{eq-(q,p-PI-2la} holds
for all balls $B$ with $\tfrac52 LB \subset B_0$, 
and the dilation constant $2\la$
in~\eqref{eq-(q,p-PI-2la} can be replaced by $L$.

In particular, if $\mu$ is\/ \textup{(}semi\/\textup{)}locally doubling and 
supports a\/ \textup{(}semi\/\textup{)}local \p-Poincar\'e inequality then
$X$ supports a\/ \textup{(}semi\/\textup{)}local $(p,p)$-Poincar\'e inequality.
\end{thm}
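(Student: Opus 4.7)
The proof will adapt the classical truncation method of Hajłasz--Koskela~\cite{HaKo-CR, HaKo}, carried out entirely inside $B_0$ so that every auxiliary ball remains in the region where the hypotheses hold. Fix a ball $B$ with $5\la B\subset B_0$, an integrable $u$ on $2\la B$, and a $p$-weak upper gradient $g$ of $u$. For concentric subballs $B'\subset B$ one has $\la B'\subset 5\la B\subset B_0$, so the given $p$-PI applies. The central computation is to apply it to the truncations $u_{t,s}:=\min\{(u-t)_+,s-t\}$, whose $p$-weak upper gradients are $g\chi_{\{t<u<s\}}$; applying the PI at dyadic levels and summing in the standard way yields a pointwise bound on $|u-u_B|$ by a truncated Riesz-type potential of $g\chi_{5\la B}$. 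Combining this with the noncentred local maximal function of Proposition~\ref{prop-maximal-fn} (operating inside $\Om:=5\la B$) together with the dimension condition~\eqref{eq-local-dim-cond}, one obtains~\eqref{eq-(q,p-PI-2la} first with dilation $5\la$, where $q=p^*$ when $p<s$ and any $q<\infty$ when $p\ge s$.

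To sharpen the dilation from $5\la$ down to $2\la$, I would telescope along a chain of concentric balls $2^{-k}B$, each of whose $5\la$-dilates still lies in $B_0$; applying the Sobolev--Poincaré estimate already obtained to each link and summing the resulting geometric series in the radii (with the dimension condition controlling the volumes) lets one replace the outer ball $5\la B$ by $2\la B$, with a final appeal to \cite[Lemma~4.17]{BBbook} to switch the centering constant from $u_{2\la B}$ back to $u_B$. For the quasiconvexity variant, Semmes's chaining argument as in~\eqref{eq-std-telescope}, but performed along connecting curves inside $LB\subset B_0$, shows that the weak $p$-PI may be strengthened to use dilation $L$ in place of $\la$; the same truncation step then yields~\eqref{eq-(q,p-PI-2la} with $2\la$ replaced by $L$, provided $\tfrac52 LB\subset B_0$.

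For the final ``In particular'' statement, given local doubling and a local $p$-PI, for each $x_0\in X$ there is a ball $B(x_0,r_0)$ on which both hold. A dimension condition~\eqref{eq-local-dim-cond} on a slightly smaller ball is obtained by iterating the reverse-doubling bound from Lemma~\ref{lem-rev-doubl}: $\mu(2^{-k}B')\le\theta^k\mu(B')$ combined with the usual doubling yields the power-law lower bound with $s=\log_2(1/\theta)$. Then, when $p<s$ the $(p^*,p)$-inequality implies the $(p,p)$-inequality via H\"older; when $p\ge s$ one chooses $q=p$ directly. The main obstacle will be the careful bookkeeping needed to ensure that every ball produced by the truncation, telescoping and reverse-doubling iterations remains inside $B_0$ so that the local assumptions can be legitimately invoked at each step; in particular one must track the successive dilation factors ($\la$ in the PI, $5$ in the maximal function covering, and $2$ in the reverse-doubling iteration) and arrange the bookkeeping so that $5\la B\subset B_0$ suffices for the final statement.
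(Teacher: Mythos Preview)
Your broad plan---run the Haj\l asz--Koskela argument entirely inside $B_0$, using the local maximal function from Proposition~\ref{prop-maximal-fn}---is the right one, and is indeed what the paper does.  However, two concrete steps in your outline do not work as written.

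First, the ``sharpening from $5\la$ to $2\la$ by telescoping along $2^{-k}B$'' is not a valid reduction: at the top scale $k=0$ you still need the integral over $5\la B$, and no amount of telescoping at smaller scales removes it (consider $g$ supported in $5\la B\setminus 2\la B$).  The paper avoids this step entirely: it builds the chain of balls $B^{i,0}=B(x',2^{-(i+1)}r)$ centred at the \emph{target point} $x'\in B$ (rather than concentric with $B$), so that the dilated balls $\la B^{i,0}$ and the covering balls needed for the $5$-covering lemma all sit inside $2\la B$ and $5\la B\subset B_0$, respectively.  Thus the dilation $2\la$ falls out directly from the chain construction and from the requirement $\tfrac52\cdot 2\la B\subset B_0$ for the maximal function; there is no separate sharpening.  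Second, your derivation of the dimension condition~\eqref{eq-local-dim-cond} is backwards: Lemma~\ref{lem-rev-doubl} gives the \emph{upper} bound $\mu(2^{-k}B')\le\theta^k\mu(B')$, i.e.\ $\mu(B')/\mu(B)\le (r_{B'}/r_B)^{\log_2(1/\theta)}$, which is the wrong direction.  The lower bound~\eqref{eq-local-dim-cond} comes straight from iterating the doubling inequality, with $s=\log_2 C_\mu$; reverse doubling plays no role there.  (Reverse doubling \emph{is} used in the paper's proof, but for a different purpose: to make the two partial sums $\Sigma'$ and $\Sigma''$ in the pointwise Riesz-potential estimate converge geometrically.)  Also, your opening sentence conflates two distinct ingredients: truncating $u$ does not yield a pointwise Riesz-potential bound; the pointwise bound comes from telescoping along the chain, and truncation (packaged in the paper as \cite[Lemma~4.25]{BBbook}) is what upgrades the resulting weak-type level-set estimate to the strong $(q,p)$-inequality.
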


\begin{proof}
It suffices to consider the case $s>p$ since when $s\le p$, one can
instead assume the dimension condition~\eqref{eq-local-dim-cond} 
with $s$ replaced by $p+\eps$  for arbitrarily small $\eps>0$.

We will follow the arguments in the proof of \cite[Theorem~4.39]{BBbook} and
prove~\eqref{eq-(q,p-PI-2la} under the assumption of local
$L$-quasiconvexity, i.e.\ with $LB$ in the right-hand side.
To obtain~\eqref{eq-(q,p-PI-2la} with $2\la B$ in the right-hand side, 
replace the balls in the chain below
by the balls $B^{0,0}:=B(x,2r)$, 
$B^{i,0}:=B(x',2^{-(i+1)}r)$, $i=1,2,\ldots$,
and $\widehat{B}^{i}:=\la B^{i,0}$.
Note that $\frac52 \widehat{B}^{i} \subset  B_0$, $i=0,1,\ldots$.

Let $B\bigl(x,\tfrac52Lr\bigr)\subset B_0$ be arbitrary.
We can assume that $Lr\le\diam B_0$.
Let $\rho_0=Lr/2\la$ and $\rho_i=2^{-i} \rho_0$, $i=1,2,\ldots$\,.
For $x'\in B(x,r)$, consider an $L$-quasiconvex curve $\ga$ from $x$ 
to $x'$, i.e.\ $\ga(0)=x$ and $\ga(l_\ga)=x'$, where $l_\ga\le Ld(x,x')$
is the length of $\ga$.
Find the smallest integer $i'\ge0$ such that $2\la\rho_{i'}\le L(r-d(x,x'))$.
For each $i=0,1,\ldots,i'-1$, consider all the integers $j\ge0$, such that
\[
\tij:= (1-2^{-i})l_\ga+ j\rho_i < (1-2^{-(i+1)})l_\ga,
\]
and let $\xij=\ga(\tij)$.
There are at most $\la$ such $\xij$'s for each $i$.
Similarly, for $i=i'$ there are at most $2\la$ integers
$j\ge0$ and points $\xij=\ga(\tij)$ such that $\tij<l_\ga$.
For $i>i'$, let $j=0$ and $\xij=x'$.

It is now easily verified that $d(x,\xij)+\la\rho_i<Lr$
and hence 
\[
\Bij:=B(\xij,\rho_i)\subset \la \Bij \subset B(x,Lr).
\]

Ordering the balls $\Bij$ lexicographically, we obtain a chain of balls
from $x$ to $x'$, with substantial overlaps.
Assuming that $x'$ is a Lebesgue point of $u$,
a standard telescoping argument using the \p-Poincar\'e 
inequality
for each $\Bij\subset B_0$, as in \eqref{eq-std-telescope},
then yields the estimate
\begin{equation} \label{eq-PI-appl}
|u(x')-u_{B^{0,0}}| 
   \le C_1 \sum_B r_B \biggl( \vint_{\la B} g_u^p\,d\mu \biggr)^{1/p},
\end{equation}
where the sum is taken over all balls $B$ in the chain.
Note that, because of the doubling property within $B_0$, the balls
$\la B$ and $B(x',\la r_B)$ have comparable measures.
The dimension condition~\eqref{eq-local-dim-cond} therefore yields
\[
|u(x')-u_{B^{0,0}}| 
   \le C_2r \sum_B \frac{\mu(B(x',\la r_B))^{1/s-1/p}}{\mu(B(x,Lr))^{1/s}}
                 \biggl( \int_{\la B} g_u^p\,d\mu \biggr)^{1/p}
=: \Sigma' + \Sigma'',
\] 
where the summations in $\Sigma'$ and $\Sigma''$ are over $B$ with 
$r_B>\rho_{i_0}$ and $r_B\le\rho_{i_0}$, respectively (and $i_0\ge0$ will
be chosen later).
Since $\la \rho_i \le \la \rho_0 = \frac{1}{2} Lr \le \frac{1}{2} \diam B_0$,
Lemma~\ref{lem-rev-doubl} implies that there exists $\theta\in(0,1)$,
independent of $x'$ and $i$, such that
\[
\mu(B(x',\la\rho_i)) \ge    \theta^{i-i_0} \mu(B(x',\la\rho_{i_0}))
\quad \text{for } i\le i_0
\]
and hence, as $1/s-1/p<0$,
\[
\Sigma' \le C_3 r \biggl( \frac{\mu(B(x',\la \rho_{i_0}))} {\mu(B(x,Lr))}
            \biggr)^{1/s-1/p}
                 \biggl( \vint_{B(x,Lr)} g_u^p\,d\mu \biggr)^{1/p}.
\]
Similarly, 
\[
\mu(B(x',\la\rho_i)) \le \theta^{i-i_0} \mu(B(x',\la\rho_{i_0}))
\quad \text{for } i>i_0
\]
and hence, as $1/s>0$,
\[
\Sigma'' \le C_4r \biggl( \frac{\mu(B(x',\la \rho_{i_0}))} {\mu(B(x,Lr))}
            \biggr)^{1/s}  M(x')^{1/p},
\]
where $M(x'):= M^*_{B(x,Lr),B_0} g_u^p (x')$ is the noncentred local
maximal function given by~\eqref{eq-def-local-max-fn}.
Here we have also used that both $x'$ and $\la\Bij$ are contained in 
$\Bhi:=B(x_{i,0},2\la\rho_i)\subset B(x,Lr)$, and 
$\tfrac52\Bhi\subset B\bigl(x,\tfrac52 Lr\bigr)\subset B_0$,  $i\ge0$.
Choosing $i_0\ge0$ so that
\[
\frac{\mu(B(x',\la \rho_{i_0}))} {\mu(B(x,Lr))}
\quad \text{is comparable to} \quad
\frac{1}{M(x')} \vint_{B(x,Lr)} g_u^p\,d\mu \le1,
\]
we can conclude that
\begin{align*}
|u(x')-u_{B^{0,0}}| \le \Sigma' + \Sigma'' 
&\le C_5r \biggl( \vint_{B(x,Lr)} g_u^p\,d\mu \biggr)^{1/s} M(x')^{1/p-1/s},
\end{align*}
which gives a lower bound for $M(x')$.
Proposition~\ref{prop-maximal-fn} then yields the level set estimate
\[
\mu(\{x'\in B(x,r): |u(x')-u_{B^{0,0}}|\ge t \}) 
   \le \frac{C_6r^q}{t^q} \mu(B(x,Lr)) 
            \biggl( \vint_{B(x,Lr)} g_u^p\,d\mu \biggr)^{q/p},
\]
which in turn implies~\eqref{eq-(q,p-PI-2la} with $B(x,Lr)$ in the
right-hand side, by \cite[Lemma~4.25]{BBbook}.

To conclude the statement of the theorem under local assumptions,
let $x_0\in X$ be arbitrary and find $r_0>0$ so that the local assumptions
hold within $B(x_0,r_0)$.
Then choose a radius $0<r_0'\le (11\la)^{-1}r_0$ so that 
$B_0':=B(x_0,r_0') \ne X$ and $\dist(x_0,X \setm B_0')=r_0'$.
For $B=B(x,r)\subset B_0'$ it then follows that $r_B\le2r_0'$ and hence 
$5\la B \subset B(x_0,r_0)$. 
The already proved first part of the theorem then
implies that~\eqref{eq-(q,p-PI-2la} holds for $B$.

Under semilocal assumptions, let $B_0':=B(x_0,r_0')\ne X$ be arbitrary and 
such that $\dist(x_0,X \setm B_0')=r_0'$.
(If $B_0'=X$, the proof is similar.)
Then $r_B\le2r_0'$ whenever $B=B(x,r)\subset B_0'$, and hence $2B\subset 5B_0'$.
Note that 
above, when proving \eqref{eq-(q,p-PI-2la} with $2 \la B$ on
the right-hand side, the 
\p-Poincar\'e inequality 
is only used to balls within $2B$ (to obtain \eqref{eq-PI-appl}), while
\eqref{eq-local-dim-cond} and the doubling property are used for balls within
$5\la B$, where $\la$ is the dilation constant
in the \p-Poincar\'e inequality
within $2B$.
Thus, to obtain~\eqref{eq-(q,p-PI-2la} for $B\subset B_0'$ 
we need to apply the \p-Poincar\'e inequality
with $\la$ and $\CPI$ determined by $5B_0'$, followed by
\eqref{eq-local-dim-cond} and the doubling property with constants
determined by $11\la B_0'$.
This can be done because of the semilocal assumptions, since the
doubling property for $\mu$ within $11\la B_0'$ implies~\eqref{eq-local-dim-cond}
within $11\la B_0'$ for some $s>0$.
\end{proof}

\begin{remark}  \label{rem-s-and-q}
There is a converse relation between $s$ and $q$ in Theorem~\ref{thm-(q,p)-PI}
as well, namely if the $(q,p)$-Poincar\'e inequality
\eqref{eq-(q,p-PI-2la} holds for all balls $B$ with $5 \la B \subset B_0$,
and $\mu$ is doubling within $B_0$,
then \eqref{eq-local-dim-cond} holds with $s=qp/(q-p)$
for all balls $X \ne B'\subset B$ with $15\la B \subset B_0$;
this follows from the proof
of \cite[Proposition~4.20]{BBbook}.
Note that the formulas $s(q)$ and $q(s)$ are inverse functions of each other,
if $p<s$.
In particular,
if we let 
\begin{align*}
\shat  &= \sup_{x \in X}\lim_{r\to0} \inf \{s>0: \eqref{eq-local-dim-cond} 
\text{ holds for all balls } B'\subset B\subset B(x,r)\}. \\
\qhat & = \sup \{q \ge p : X \text{ supports a local $(q,p)$-Poincar\'e inequality}\},
\end{align*}
then
\[
    \qhat = \begin{cases}
      \displaystyle \frac{\shat p}{\shat -p}, & \text{if } p < \shat < \infty, \\
      p, & \text{if } \shat= \infty, \\
      \infty, & \text{if } \shat \le p.
      \end{cases}
\]

Note that there need
 not exist an optimal 
 $s$ (for a given $B_0$ in Theorem~\ref{thm-(q,p)-PI}),
i.e.\ the set of values of $s$ for which \eqref{eq-local-dim-cond} holds may be an open interval,
cf.\ Example~3.1 in 
Bj\"orn--Bj\"orn--Lehrb\"ack~\cite{BBLeh1}.
Similarly there need
not be an optimal $q$.
\end{remark}

The second type of self-improvement 
for Poincar\'e inequalities is the open-ended property
due to Keith--Zhong~\cite[Theorem~1.0.1]{KZ} which
strengthens
the right-hand side of the inequality, see also 
Heinonen--Koskela--Shanmugalingam--Tyson~\cite[Theorem~12.3.9]{HKSTbook},
Eriksson-Bique~\cite{Eriksson-Bique} and
Kinnunen--Lehrb\"ack--V\"a\-h\"a\-kan\-gas--Zhong~\cite{KLVZ}.
A careful analysis of the proof
(in \cite{KZ} or \cite{HKSTbook})
shows that all the balls considered therein
lie within a constant dilation of the ball
in 
the Poincar\'e inequality under consideration.
This makes it possible to prove the following local version. 

\begin{thm} \label{thm-KZ-proper}
Assume that $p>1$ and let
$B_0=B(x_0,r_0)$ be a ball such that 
$\itoverline{B}_0$ is compact and
the \p-Poincar\'e inequality and the doubling property for $\mu$ hold
within $B_0$ in the sense of 
Definitions~\ref{def-local-doubl-mu} and~\ref{def-PI}.

Then there exist constants $C$, $\la$ and $q<p$, depending only on 
$p$, the doubling 
constant and both constants in the \p-Poincar\'e inequality within $B_0$,
such that for all balls $B$ with 
$\la B\subset B_0$,
all integrable functions $u$ on $\la B$, and all 
$q$-weak upper gradients $g$ of $u$, 
\begin{equation}    \label{eq-KZ-proper}
        \vint_{B} |u-u_B| \,d\mu
        \le C r_B \biggl( \vint_{\la B} g^{q} \,d\mu \biggr)^{1/q}.
\end{equation}
\end{thm}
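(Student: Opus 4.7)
The plan is to localize the Keith--Zhong self-improvement argument~\cite{KZ} (see also \cite[Theorem~12.3.9]{HKSTbook}), exploiting the observation recorded in the paragraph preceding the theorem: every auxiliary ball used in their proof to deduce the improved inequality on a ball $B$ lies within a fixed dilation $\Lambda B$ that depends only on the doubling and Poincar\'e constants within $B_0$. Accordingly, I would fix a constant $\Lambda \ge 5\lambda$ large enough to accommodate both the covering arguments and the $\tfrac52$-dilation required by the local maximal function in Proposition~\ref{prop-maximal-fn}, and prove~\eqref{eq-KZ-proper} for all balls $B$ with $\Lambda B \subset B_0$. The compactness of $\itoverline{B}_0$ guarantees that $5$-covering and Vitali-type arguments are available inside $B_0$, while Lemma~\ref{lem-rev-doubl} supplies the reverse doubling that the Keith--Zhong machinery relies on.

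Fix such a ball $B$, an integrable $u$ on $\lambda B$, and an upper gradient $g$ of $u$; the passage from upper gradients to $q$-weak upper gradients will follow at the end by the standard approximation of~\cite{KoMc}. For each threshold $t>0$, introduce the truncated set
\[
 E_t = \bigl\{ y \in B : M^*_{\Lambda B, B_0}(g^p)(y) > t^p \bigr\},
\]
where $M^*_{\Lambda B, B_0}$ is the local maximal operator from Proposition~\ref{prop-maximal-fn}. Off $E_t$, a dyadic telescoping chain of sub-balls of $B_0$, combined with the $p$-Poincar\'e inequality and doubling within $B_0$, yields a pointwise oscillation estimate for $u$ in terms of $t$ and the involved radii. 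On $E_t$, a Whitney-type decomposition inside $\Lambda B$ controls the bad contribution through the weak-type estimate~\eqref{eq-max-weak-L1}. Summing over dyadic thresholds $t_k=2^kt_0$ in the Keith--Zhong ``good-$\lambda$'' fashion, together with the reverse doubling from Lemma~\ref{lem-rev-doubl}, produces an exponent $q<p$ for which~\eqref{eq-KZ-proper} holds, with constants depending only on $p$, the doubling constant within $B_0$, and the $p$-Poincar\'e constants within $B_0$.

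The main obstacle is bookkeeping: one has to verify that every ball produced by the Whitney covering and every ball in the telescoping chain is a sub-ball of $B_0$ whose $\tfrac52$-dilation also lies in $B_0$, so that both the $p$-Poincar\'e inequality and Proposition~\ref{prop-maximal-fn} are legitimately applicable. This is achieved by making $\Lambda$ large enough relative to $\lambda$ and to the doubling constant, and by exploiting the fact that the chains generated in the Keith--Zhong proof have radii that shrink geometrically from $r_B$, so all of them remain within a fixed multiple of $B$. Once this setup is fixed, the proof of~\cite{KZ} (or~\cite[Theorem~12.3.9]{HKSTbook}) goes through essentially verbatim, with the global Hardy--Littlewood maximal function replaced by $M^*_{\Lambda B, B_0}$ and every use of the Poincar\'e inequality or doubling confined to sub-balls of $B_0$.
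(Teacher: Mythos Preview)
Your overall localization strategy is sound, but the paper's actual proof is organized quite differently, and your sketch glosses over the main technical device. The version of Keith--Zhong in \cite[Theorem~12.3.9]{HKSTbook} is written for \emph{geodesic} spaces, where the Poincar\'e inequality holds with dilation $\la=1$; this is not the situation within $B_0$, so the proof does not go through ``essentially verbatim'' as you claim. The paper handles this by first passing to the inner metric $\din$ on $B_0$: using Lemma~\ref{lem-ex-curve-in-B0} to compare $d$ and $\din$, it establishes (via a chaining argument along $\din$-geodesics) an inner \p-Poincar\'e inequality with dilation constant $1$ for all inner balls $\Bin(x,r)$ with $B(x,2r)\subset B_0$. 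Only then is the Keith--Zhong machinery invoked, in the geodesic inner-metric setting, to produce a $q$-Poincar\'e inequality for inner balls with $1280B(x,r)\subset B_0$; the final step translates this back to ordinary balls via the comparability $B(x,r/L)\subset\Bin(x,r)\subset B(x,r)$, yielding $\la=1280L$.

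Your description of the Keith--Zhong argument itself (``good-$\la$'' over dyadic thresholds, Whitney covering of $E_t$) does not match the actual structure of \cite{KZ} or \cite[Section~12.3]{HKSTbook}, which is considerably more intricate and relies on iterated maximal-function estimates together with a delicate absorption argument. If you intend to localize \cite{KZ} directly in the original metric (with $\la>1$), that is in principle possible, but it requires tracking the dilation through every step of their proof rather than a single enlargement; the ``bookkeeping'' you allude to is the entire content, not a side issue. The inner-metric reduction is precisely what lets the paper avoid this and invoke the geodesic result as a black box.
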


In the proof below, we will use the \emph{inner metric} which is defined
by 
\[
    \din(x,y)=\inf \length(\ga),
\]
where the infimum is taken over all 
curves $\ga$ connecting $x$ and $y$.
If there are no such curves then $\din(x,y)=\infty$.
As  $X$ may be disconnected, this is not always a metric, but we will
nevertheless still use the name ``inner metric''.
Balls with respect to $\din$, defined in the obvious way,
will be denoted by $\Bin$.

\begin{proof}
Let $\la'$ be the dilation constant in the \p-Poincar\'e inequality
within $B_0$.
Lemma~\ref{lem-ex-curve-in-B0} shows that there exist $\La\ge1$ and
$L=9\La$ such that
\[
d(x,y)\le \din(x,y) \le Ld(x,y) 
\] 
whenever $B(x,2\La d(x,y))\subset B_0$.
It follows that if $2\La B(x,r)\subset B_0$, then
\begin{equation}   \label{eq-compare-balls}
B(x,r/L) \subset \Bin(x,r) \subset B(x,r) \subset \Bin(x,Lr) \subset B(x,Lr).
\end{equation}

We will now explain how the arguments in the proof of
\cite[Theorem~4.39]{BBbook} 
can be used to show that 
for every inner ball $\Bin=\Bin(x,r)$ such that 
$B(x,2r)\subset B_0$, the following inner \p-Poincar\'e inequality with
dilation constant $1$ holds:
\begin{equation}    \label{eq-PI-with-1}
\vint_{\Bin} |u-u_{\Bin}| \,d\mu
        \le C' r \biggl( \vint_{\Bin} g^p \,d\mu \biggr)^{1/p}.
\end{equation}
(Since we only assume a local \p-Poincar\'e inequality
with respect to ordinary balls, 
Theorem~4.39 in \cite{BBbook} cannot be applied directly 
and care has to be taken when comparing ordinary and inner balls.)

More precisely,  
let $\rho_0=r/2\la' L$ and $\rho_i=2^{-i} \rho_0$, $i=1,2,\ldots$\,.
For $x'\in \Bin(x,r)$, consider a $\din$-geodesic $\ga$ from $x$ 
to $x'$, i.e.\ $\ga(0)=x$ and $\ga(d')=x'$, where $d'=\din(x,x')<r$
is the length of $\ga$.
Such a geodesic exists by Ascoli's theorem and the
compactness of $\itoverline{B}_0$.
Find the smallest integer $i'\ge0$ such that $2\la'\rho_{i'}\le r-d'$.
For each $i=0,1,\ldots,i'-1$, consider all the integers $j\ge0$, such that
\[
\tij:= (1-2^{-i})d'+ j\rho_i < (1-2^{-(i+1)})d',
\]
and let $\xij=\ga(\tij)$.
There are at most $\la'L$ such $\xij$'s for each $i$.
Similarly, for $i=i'$, there are at most $2\la'L$ integers
$j\ge0$ and points $\xij=\ga(\tij)$ such that $\tij<d'$.
For $i>i'$, let $j=0$ and $\xij=x'$.

It is now easily verified that $\din(x,\xij)+\la'L\rho_i<r$
and hence, with $\Bij=B(\xij,\rho_i)$,
\[
2\la'\La \Bij\subset \la' L\Bij \subset  B(x,r) \subset B_0,
\]
so \eqref{eq-compare-balls} implies that
\[
\Bij \subset \la'\Bij \subset \Bin(\xij,\la'L\rho_i) \subset \Bin(x,r) \subset B_0.
\]
For later reference, let $y_i=\ga((1-2^{-(i+1)})d')$ when $i\le i'$ and $y_i=x'$
otherwise. 
Then each ball $\Bhij:=B(y_i,(L+1)\la'\rho_i)$ contains both $\la'\Bij$
and $x'$.
Moreover, 
\[
d(x,y_i) + \tfrac52 (L+1)\la'\rho_i < (1+2^{-(i+2)}(3+5/L))r < 2r,
\]
so $\tfrac52\Bhij \subset B(x,2r)\subset B_0$. 

Ordering the balls $\Bij$ lexicographically, we obtain a chain of balls
from $x$ to $x'$, with substantial overlaps.
Assuming that $x'$ is a Lebesgue point of $u$,
a standard telescoping argument using the \p-Poincar\'e 
inequality
for each $\Bij\subset B_0$, as in \eqref{eq-std-telescope}, 
then yields the estimate
\begin{equation}   \label{eq-telescope}
|u(x')-u_{B^{0,0}}| 
   \le C_1 \sum_B r_B \biggl( \vint_{\la'B} g_u^p\,d\mu \biggr)^{1/p},
\end{equation}
where
the sum is taken over
all balls $B$ in the above chain.
We can now estimate the measure of the set
\[
 E_t=\{x'\in \Bin(x,r): |u(x')-u_{B^{0,0}}|>t\}
\]
as follows.
Writing $t = C_2t\sum_B r_B/r$ and comparing with~\eqref{eq-telescope}, 
we can for every Lebesgue point $x'\in E_t$ find some ball $B_{x'}:=\Bij$ from the
corresponding chain so that
\begin{equation}   \label{eq-choose-Bx}
\biggl( \vint_{\la'B_{x'}} g_u^p\,d\mu \biggr)^{1/p} \ge \frac{C_3t}{r}.
\end{equation}
By the above, the corresponding ball $\Bhx:=\Bhij$ contains both $\la' B_{x'}$
and $x'$. 
Hence, using the 5-covering lemma we can from the collection $\Bhx$,
where $x'\in E_t$ are Lebesgue points of $u$, 
extract a countable pairwise disjoint subcollection
$\Bhxk$, $k=1,2,\ldots$, such that 
\[
\mu(E_t) \le \mu \biggl( \bigcup_{k=1}^\infty  5\Bhxk \biggr)
\le C_4  \sum_{k=1}^\infty \mu(\Bhxk),
\]
where in the last inequality we have used that $\tfrac52\Bhxk\subset B_0$,
so that the doubling condition can be applied.
Note also that the measures of $\la'B_{x'_k}$ and $\Bhxk$ are 
comparable.
Estimating the balls in the last sum using~\eqref{eq-choose-Bx}, 
together with the fact that the balls 
$\la' B_{x'_k}\subset \Bhxk \cap \Bin(x,r)$ are disjoint, now yields
the level set estimate
\[
t^p \mu(E_t) \le C_5r^p \sum_{k=1}^\infty \int_{\la'B_{x'_k}} g_u^p\,d\mu
\le C_6r^p \int_{\Bin(x,r)} g_u^p\,d\mu,
\]
which in turn implies~\eqref{eq-PI-with-1}, by \cite[Lemma~4.25]{BBbook}.

Next, still with respect to $\din$ and within $B_0$, the
proof in \cite[Theorem~12.3.9]{HKSTbook} (or Keith--Zhong~\cite{KZ}), which
is written for geodesic spaces, can be applied to show that
there exists $q<p$ such that for every inner ball $\Bin=\Bin(x,r)$ with 
$1280B(x,r)\subset B_0$,
\begin{equation}      \label{eq-q-PI-Bin}
\vint_{\Bin} |u-u_{\Bin}| \,d\mu
        \le C'' r \biggl( \vint_{256\Bin} g^q \,d\mu \biggr)^{1/q}.
\end{equation}
(Here it is also used that if $\Bin(x',r')\subset 256\Bin$
then $\Bin(x',r')=\Bin(x',r'')$ for some $r'' \le 512 r$, 
and hence
\[
B(x',2r'')\subset 1280 
B(x,r)\subset B_0,
\]
so \eqref{eq-PI-with-1} holds for every such inner ball 
$\Bin(x',r') \subset 256\Bin$
and can be used in the arguments leading to \cite[Theorem~12.3.9]{HKSTbook}.)
Now, by \eqref{eq-compare-balls},
\[
B(x,r/L)\subset \Bin \quad \text{and} \quad 
256\Bin\subset 256B(x,r)\subset 1280B(x,r), 
\]
all with comparable measures (depending on $L$).
Hence, \eqref{eq-q-PI-Bin}
yields~\eqref{eq-KZ-proper} with $\la=1280L$
and $B$ replaced by $B(x,r/L)$.
\end{proof}

In Heinonen--Koskela--Shanmugalingam--Tyson~\cite[Proposition~12.3.10]{HKSTbook}
it is explained how (under global assumptions) the properness of $X$
in Keith--Zhong~\cite{KZ} 
can be relaxed to local compactness,
at the price that the resulting $q$-Poincar\'e inequality
only holds for $u \in \Np(\la B)$,
which is however
enough for many applications.
A counterexample by Koskela~\cite{Koskela}  shows
that one cannot deduce a standard $q$-Poincar\'e inequality
in this case.
A similar improvement can be proved under local assumptions and
in this case we \emph{do} conclude a standard local $q$-Poincar\'e
inequality, even though $q$ may vary from ball to ball.
Under semiuniformly local assumptions there is even a 
\emph{fixed} $q<p$,
see Theorem~\ref{thm-PI-uniform-q-intro} whose
  proof is given in Section~\ref{sect-local-unif} below.

\begin{thm}  \label{thm-loc-cpt-q-PI}
If $X$ is locally compact  and supports a 
local \p-Poincar\'e inequality, $p>1$, and $\mu$ is locally doubling,
then for every $x_0\in X$ there is a ball $B_0' \ni x_0$,
 and $q<p$, such that 
a $q$-Poincar\'e inequality holds within $B_0'$
in the sense of Definition~\ref{def-PI}.

If $X$ is in addition proper and connected,
then the conclusion is semilocal,
i.e.\ it holds for all balls $B_0'\subset X$. 
\end{thm}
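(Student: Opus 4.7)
The idea is to reduce both assertions to Theorem~\ref{thm-KZ-proper}, which already requires only that the closure of the reference ball be compact, and to use a chain argument (as in the proof of Theorem~\ref{thm-PI}) to pass from a $q$-Poincar\'e inequality on small balls to one on all sub-balls.

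For the first (local) part, given $x_0\in X$, I would use local compactness together with the local doubling and local $p$-Poincar\'e assumptions to find $r_1>0$ such that $\overline{B(x_0,r_1)}$ is compact, and such that both the doubling condition and the $p$-Poincar\'e inequality hold within $B_0:=B(x_0,r_1)$. Theorem~\ref{thm-KZ-proper} then produces $q<p$ and constants $C_0,\la_0\ge1$ so that the $q$-Poincar\'e inequality~\eqref{eq-KZ-proper} holds for every ball $B$ with $\la_0 B\subset B_0$. I would choose $B_0':=B(x_0,r_0')$ with $r_0'>0$ so small that $r_0'\le r_1/(1+2\la_0)$ and $B_0'$ lies inside a locally quasiconvex neighbourhood of $x_0$ provided by Proposition~\ref{prop-ex-curve-in-B0}, and with $B_0'\ne X$. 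This forces $r_B\le 2r_0'$ for every sub-ball $B\subset B_0'$, so that $\la_0 B\subset B(x_0,(1+2\la_0)r_0')\subset B_0$, and the $q$-Poincar\'e inequality on $B$ is immediate from Theorem~\ref{thm-KZ-proper}.

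For the second (semilocal) part, Proposition~\ref{prop-semilocal-doubling} and Theorem~\ref{thm-PI} first upgrade the assumptions to semilocal doubling and a semilocal $p$-Poincar\'e inequality. Given an arbitrary ball $B_0'=B(x_0,r_0')$, I would set $B_0:=B(x_0,R)$ with $R\ge 2r_0'$; properness ensures $\overline{B_0}$ is compact, and the semilocal assumptions give doubling and the $p$-Poincar\'e inequality within $B_0$. Theorem~\ref{thm-KZ-proper} then yields $q<p$, $\la_0\ge1$ and $C_0$ such that the $q$-Poincar\'e inequality holds whenever $\la_0 B\subset B_0$. Any $B\subset B_0'$ with $r_B\le r_0'/\la_0$ is covered directly. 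For larger $B\subset B_0'$, I would mimic the chain argument from the proof of Theorem~\ref{thm-PI}: cover $B$ by a finite chain of balls of radius $r_0'/\la_0$ whose centres lie on a rectifiable curve in $B_0$ supplied by the semilocal quasiconvexity of Proposition~\ref{prop-ex-curve-in-B0}, apply the already-obtained $q$-Poincar\'e inequality on each, and iterate Lemma~\ref{lem-iteration-with-Q} to telescope these into a $q$-Poincar\'e inequality on $B$ whose right-hand side is an average of $g^q$ over a fixed dilation $\la' B$ with $\la'$ depending only on $B_0'$.

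The main obstacle is precisely this chain step in the second part: the overlap constants in Lemma~\ref{lem-iteration-with-Q} must be controlled uniformly along the chain (via semilocal doubling), and after finitely many iterations the right-hand side must be rewritten as an average against a single dilation $\la' B$ with $\la'$ bounded in terms of $B_0'$ alone. This parallels the closing part of the proof of Theorem~\ref{thm-PI} with the exponent on the gradient side being $q$ instead of $p$, so no genuinely new ideas are required beyond careful bookkeeping and appealing to semilocal doubling to compare averages on $B$ and on $\la' B$.
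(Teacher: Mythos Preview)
Your approach for the first (local) part is essentially the same as the paper's, though the reference to local quasiconvexity from Proposition~\ref{prop-ex-curve-in-B0} is unnecessary: shrinking $B_0'$ so that $\la_0 B\subset B_0$ for every sub-ball $B$ already suffices.

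For the second (semilocal) part, your route genuinely differs from the paper's. The paper does \emph{not} use Theorem~\ref{thm-KZ-proper} as a black box followed by a chain argument. Instead, it re-enters the proof of Theorem~\ref{thm-KZ-proper}: the Keith--Zhong argument there is carried out with respect to the inner metric $\din$ and produces the $q$-Poincar\'e inequality~\eqref{eq-q-PI-Bin} on every inner ball $\Bin(x,r)$ with $1280B(x,r)\subset B_0$, with a fixed dilation factor $256$. Taking $L$ to be the semilocal quasiconvexity constant for $5B_0'$ (from Lemma~\ref{lem-ex-curve-in-B0}) and $B_0=2561L\,B_0'$, the paper then converts inner balls back to metric balls via $B(x,r)\subset\Bin(x,Lr)$ and $256\,\Bin(x,Lr)\subset B(x,256Lr)$, obtaining the $q$-Poincar\'e inequality directly for every $B\subset B_0'$ with dilation $256L$. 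No chain step is needed.

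Your chain approach should also work: once Theorem~\ref{thm-KZ-proper} supplies a $q$-Poincar\'e inequality for sub-balls of radius at most $r_0'/\la_0$, the argument from the proof of Theorem~\ref{thm-PI} (with $(q,p)$ replaced by $(1,q)$, so Lemma~\ref{lem-iteration-with-Q} is applied with exponent $1$ on the left) lifts it to larger balls. You correctly note that one must choose $R$ large enough to contain the $\la_0$-dilates of all chain balls lying along the quasiconvex curves in $\sigma B_0'$; since $\sigma$ depends only on $B_0'$ while $\la_0$ is fixed once $B_0$ is, there is no circularity. The paper's method buys a cleaner constant and avoids repeating the iteration of Lemma~\ref{lem-iteration-with-Q}; your method buys modularity, treating Theorem~\ref{thm-KZ-proper} purely as a black box.
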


Note that for a semilocal conclusion it is \emph{not} enough
to assume that $X$ is locally compact and that the doubling and
Poincar\'e assumptions are semilocal.
This is another consequence of
the counterexample in Koskela~\cite{Koskela}.

\begin{proof}
Let $x_0\in X$ be arbitrary and find $r_0>0$ so that 
$\itoverline{B(x_0,r_0)}$ is compact and
the local assumptions hold within $B(x_0,r_0)$.
Let $\la$ be 
given by Theorem~\ref{thm-KZ-proper}.
Then choose a radius $0<r_0'\le (3\la)^{-1}r_0$ so that 
$B_0':=B(x_0,r_0') \ne X$ and $\dist(x_0,X \setm B_0')=r_0'$.
For $B\subset B_0'$ it then follows that $r_B\le2r_0'$ and hence 
$\la B \subset B(x_0,r_0)$.
The first statement then follows from Theorem~\ref{thm-KZ-proper}.

If $X$ is in addition proper and connected, then let $B_0':=B(x_0,r_0')$ be
arbitrary and assume that $\dist(x_0,X \setm B_0')=r_0'$ (the proof
is similar for $B_0'=X$).
Since $\la$ in Theorem~\ref{thm-KZ-proper} depends on $B_0$, we cannot
directly obtain a semilocal conclusion from it.

Instead, let $L$ be provided by 
Lemma~\ref{lem-ex-curve-in-B0} with $B_0$ replaced by $5B_0'$.  
Then for every  ball 
$B(x,r)\subset B_0'$, we have $r\le2r_0'$ and 
hence $B(x,1280Lr)\subset 2561LB_0' =:B_0$.
Because the \p-Poincar\'e inequality and the doubling property for $\mu$ hold
within $B_0$ (by Proposition~\ref{prop-semilocal-doubling-intro}
and Theorem~\ref{thm-PI-intro}), 
the 
proof of Theorem~\ref{thm-KZ-proper} 
(with constants dictated by $B_0$) 
yields \eqref{eq-q-PI-Bin}.
In particular, as $B(x,1280Lr) \subset B_0$, 
the inner $q$-Poincar\'e inequality \eqref{eq-q-PI-Bin} holds 
with  $\Bin$ replaced by $\Bin(x,Lr)$
and with constants $q<p$ and $C''>0$ depending on $B_0$.

Now, $B(x,2r) \subset 5B_0'$  (as $r\le2r_0'$)
and hence, since $X$ is proper, 
Lemma~\ref{lem-ex-curve-in-B0}
implies that
\[
B(x,r)\subset \Bin(x,Lr) 
\quad \text{and} \quad
\Bin(x,256Lr)\subset B(x,256Lr),
\] 
all with
comparable measures (by the semilocal doubling property of $\mu$
provided by Proposition~\ref{prop-semilocal-doubling-intro}).
We can therefore conclude that~\eqref{eq-KZ-proper} holds for every 
$B(x,r) \subset B_0'$ with $\la =256 L$.
\end{proof}

\section{Semiuniformly local assumptions}
\label{sect-local-unif}

A possible strengthening of
our local assumptions is to also require
uniformity in the constants and/or the radii.

\begin{deff}
The measure $\mu$ is \emph{semiuniformly locally  doubling}
if there is a (uniform) constant $C$ such that for each $x  \in X$
there is $r>0$ so that $\mu(2B)\le C \mu(B)$ 
for all balls $B \subset B(x,r)$.
If $r$ is independent of $x$, then $\mu$ is 
\emph{uniformly locally  doubling}.
\end{deff}

Note that there is no uniformity of the radii
when $\mu$ is semiuniformly locally doubling.
(Semi)uniformly local
Poincar\'e inequalities are defined similarly,
with uniform constants $C$ and $\la$.
Semiuniformly local assumptions
were used
by Holopainen--Shan\-mu\-ga\-lin\-gam~\cite{HoSh}.

It may seem more natural to impose uniformly local 
assumptions but, as we shall see,
semiuniformly local assumptions are sometimes enough.
The semiuniformly local 
assumptions 
also have the advantage that they
are inherited by open subsets, and in particular
are satisfied on 
all open subsets of spaces supporting global assumptions.
Moreover, any strictly positive continuous weight on $\R^n$
  is semiuniformly locally  doubling and supports a 
semiuniformly local  $1$-Poincar\'e
inequality.
On the other hand, our local assumptions are more general,
as seen in Example~\ref{ex-local-unif} below, and 
sufficient for many purposes, as demonstrated in this paper.
However, under semiuniformly local assumptions the constants
and exponents in the local (but not necessarily the semilocal)
results in  Section~\ref{sect-better-PI}
are also uniform. We now make this more precise. 

\begin{thm}
If $\mu$ is semiuniformly locally doubling and 
$X$ supports a local\/ \textup{(}resp.\ semiuniformly local\/\textup{)}
 \p-Poincar\'e inequality, 
 then there is $q>p$ such that
 $X$ supports a local\/ \textup{(}resp.\ semiuniformly local\/\textup{)}
 $(q,p)$-Poincar\'e inequality.
\end{thm}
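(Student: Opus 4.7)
The plan is to apply Theorem~\ref{thm-(q,p)-PI} at each point $x_0 \in X$ and carefully track the dependence of all constants. The crucial observation is that in Theorem~\ref{thm-(q,p)-PI} the exponent $s$ in the dimension condition~\eqref{eq-local-dim-cond} (and hence the output $q = p^*$), as well as the constant $C$, depend only on $p$, the doubling constant $C_\mu$ and the Poincar\'e constants $C_{\mathrm{PI}}$ and $\la$, not on the radii of the ambient balls. Under semiuniformly local hypotheses $C_\mu$ (and, in the semiuniformly local Poincar\'e case, also $C_{\mathrm{PI}}$ and $\la$) is uniform in $x_0$, and hence so will be the resulting constants.

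First I would observe that semiuniformly local doubling with uniform constant $C_\mu$ yields the dimension condition~\eqref{eq-local-dim-cond} with $s = \log_2 C_\mu$ and $C_0$ depending only on $C_\mu$, within each ball $B(x_0, r_\mu(x_0))$ where the doubling holds; this is obtained by the standard iteration of $\mu(2B) \le C_\mu \mu(B)$. Both $s$ and $C_0$ are therefore independent of $x_0$. For each $x_0$ I would then pick $r_0(x_0) > 0$ small enough that $\mu$ is doubling and the \p-Poincar\'e inequality both hold within $B_0 := B(x_0, r_0(x_0))$. Fixing $q = p^* = sp/(s-p)$ when $p < s$ and any fixed $q > p$ when $p \ge s$, Theorem~\ref{thm-(q,p)-PI} then delivers the $(q,p)$-Poincar\'e inequality within the smaller ball $B_0' := B(x_0, r_0(x_0)/(5\la))$, with dilation $2\la$ and constant $C$ depending only on $p$, $C_\mu$, $C_{\mathrm{PI}}$ and $\la$.

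Finally I would separate the two cases in the conclusion. In the local Poincar\'e case, $C_{\mathrm{PI}}$ and $\la$ may depend on $x_0$, so the $(q,p)$-Poincar\'e constants inherit that dependence; however $q$ itself is determined by $s$ and hence by $C_\mu$ alone, so a single $q$ works at every point, matching the statement. In the semiuniformly local Poincar\'e case all inputs are uniform in $x_0$, so we obtain a semiuniformly local $(q,p)$-Poincar\'e inequality with uniform $C$, $\la$ and $q$. The main obstacle is not conceptual but bookkeeping: one must keep straight which constants are allowed to depend on $x_0$ at each stage, and verify that the point-dependent choice of the radius $r_0(x_0)$ does not infect the exponent $q$ or the output constants $C$ and $\la$.
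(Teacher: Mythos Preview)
Your proposal is correct and follows essentially the same approach as the paper: apply Theorem~\ref{thm-(q,p)-PI} at each point and observe that the exponent $q$ depends only on the dimension exponent $s$, which in turn depends only on the uniform doubling constant $C_\mu$. The paper's proof is a single sentence to this effect; your version spells out the bookkeeping (including the case split and the point that $q$ is determined by $C_\mu$ alone while $C$ and $\la$ may still vary with $x_0$ in the merely local case), which is exactly the content behind that sentence. One small caution: the shrinking factor $1/(5\la)$ you propose for $B_0'$ is not quite enough to guarantee $5\la B \subset B_0$ for every $B \subset B_0'$, since $r_B$ is not a priori bounded by $r_{B_0'}$; the paper handles this in the last paragraph of the proof of Theorem~\ref{thm-(q,p)-PI} by taking $r_0' \le (11\la)^{-1} r_0$ with $\dist(x_0, X\setminus B_0') = r_0'$, which forces $r_B \le 2r_0'$.
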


\begin{proof}
This is a direct consequence of Theorem~\ref{thm-(q,p)-PI},
since the exponent $q$, given by an explicit formula,
only depends on the local doubling constant.
\end{proof}

\begin{proof}[Proof of Theorem~\ref{thm-PI-uniform-q-intro}]
This follows from Theorem~\ref{thm-loc-cpt-q-PI},
since the improvement $p-q$
in the exponent depends only on the local doubling constant and the constants
$p$, $C$ and $\la'$ in the local \p-Poincar\'e inequality within $B_0'$.
\end{proof}

Another  consequence of semiuniformly local assumptions is obvious
in Lemma~\ref{lem-ex-curve-in-B0}: the constants $\Lambda$ and $L$
therein are uniform. 
It would be interesting to know for which other results 
it is essential to require semiuniformly local
assumptions, 
especially when the consequences for the conclusions are not
just the uniformity of constants.

\begin{example} \label{ex-local-unif}
For $k \ge 3$, let
\begin{align*}
  E_k &= \bigcup_{j=2}^\infty \bigl([k-2k^{-j},k-k^{-j}]\cup[k+k^{-j},k+2k^{-j}]\bigr) 
         \times [0,k^{1-j}], \\\
  X  &= \bigl(\R \times (-\infty,0]\bigr) \cup \bigcup_{k=3}^\infty E_k,
\end{align*}
i.e.\ $X$ is the closed
lower half-plane together with countably many ``skyscraper
cities'' $E_k$ near each point $x_k=(k,0)$, $k\ge3$.
Note that each $E_k$ is self-similar with the factor $1/k$ and centre $x_k$.

Equip $X$ with the Euclidean distance and the $2$-dimensional 
Lebesgue measure~$\mu$.
Then every $x\in X$ has a neighbourhood $V_x$ (with respect to $X$) whose
interior (with respect to $\R^2$) is a uniform subdomain of $\R^2$.
This implies that $\mu$ is doubling and supports a 1-Poincar\'e inequality
on $\overline{V}_x$, 
by Theorem~4.4 in Bj\"orn--Shanmugalingam~\cite{BjShJMAA} and
Proposition~7.1 in Aikawa--Shanmugalingam~\cite{AikSh05}.
Thus $\mu$ is locally doubling and supports a local 1-Poincar\'e inequality.

For $j,k\ge4$, let $r_{j,k}=k^{-j}$ and $x_{j,k}=(k+r_{j,k},kr_{j,k})\in E_k$.
Then it is easily seen that $\mu(B(x_{j,k},kr_{j,k}))$ is comparable to
$k^{1-2j}$, while $\mu(B(x_{j,k},2kr_{j,k}))$ is comparable to $k^{2(1-j)}$.
It follows that the local doubling constant near $x_k$ is at least 
comparable to $k$.

Similarly, the ball $B(x_{j,k},3r_{j,k})$ is disconnected and 
contains the point $(k-r_{j,k},kr_{j,k})$.
Lemma~\ref{lem-rect-components} 
then implies that
the local dilation constant $\la_k$ in any Poincar\'e
inequality around the point $x_k$ satisfies $\la_k \ge \tfrac13 k$.

Letting $k\to\infty$ shows that 
$\mu$ is not semiuniformly locally doubling and does not support any 
semiuniformly local Poincar\'e inequality.
On the other hand, since $X$ is proper and connected, it follows from 
Proposition~\ref{prop-semilocal-doubling} and Theorem~\ref{thm-PI} 
that $\mu$ is semilocally doubling and supports 
a semilocal 1-Poincar\'e inequality.

If one is satisfied with a (semi)local \p-Poincar\'e inequality only for
$p>2$, rather than a 1-Poincar\'e inequality, then a simpler example can be
constructed by replacing each ``city'' $E_k$ with a wedge
\[
V_k=\{(x,y)\in\R^2: k^2|x-k|\le y \le 2k^2|x-k|\le 1 \}.
\]
In this case, the validity of a local \p-Poincar\'e inequality for $p>2$
follows from \cite[Example~A.23]{BBbook}.

It is also possible to make $X$ bounded if $\R\times(-\infty,0]$ in its 
construction is replaced by $(-1,1)\times(-1,0]$ and the ``cities'' 
or ``wedges''
are attached at the points $(1-8k^{-1},0)$ rather than at $x_k$, $k\ge8$.
In this case, $X$ is not complete and
does not support semilocal assumptions,
because they would automatically imply
global assumptions 
as $X$ is bounded,
and thus semiuniformly local assumptions would follow as well.
\end{example}

\section{Lebesgue points}
\label{sect-Leb}

Using the better Poincar\'e inequalities of
Section~\ref{sect-better-PI} it can be shown
that Newtonian functions have $L^p$-Lebesgue points
q.e.\ also under local assumptions.
By H\"older's inequality every $L^p$-Lebesgue point is 
an $L^r$-Lebesgue point for all $1 \le r \le p$.

\begin{thm} \label{thm-Leb-pt}
Assume that $p>1$, that $\mu$ is locally doubling and
that one of the following conditions is satisfied\/\textup{:}
\begin{enumerate}
\item \label{Leb-a}
$X$ is locally compact and 
supports a local \p-Poincar\'e inequality\/\textup{;}
\item \label{Leb-b}
for every $x \in X$ there are $r>0$ and $q<p$
such that 
a $q$-Poincar\'e inequality holds
within $B(x,r)$ in the sense of 
Definition~\ref{def-PI}.
\end{enumerate}
If $u \in \Nploc(X)$, then q.e.\ $x$
is an $L^p$-Lebesgue point of $u$, i.e.
\[ 
\lim_{r \to 0} \vint_{B(x,r)} |u-u(x)|^p\, d\mu=0.
\] 
\end{thm}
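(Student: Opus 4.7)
First, reduce case (a) to case (b): Theorem \ref{thm-loc-cpt-q-PI} shows that under (a), every point of $X$ has a neighbourhood in which a $q$-Poincar\'e inequality holds for some $q<p$, which is exactly (b). So assume (b). Combined with local doubling of $\mu$ (which provides a dimension condition \eqref{eq-local-dim-cond} on bounded balls), Theorem \ref{thm-(q,p)-PI}---iterated a finite number of times if necessary and followed by an application of Jensen's inequality on the left---yields, around each $x_0 \in X$, a ball $B_0 = B(x_0, r_0)$ with an exponent $q<p$ (possibly enlarged from the original one) and constants $C, \la \ge 1$ such that the local $(p, q)$-Poincar\'e inequality
\begin{equation*}
\biggl( \vint_B |u - u_B|^p \, d\mu \biggr)^{1/p} \le Cr_B \biggl( \vint_{\la B} g_u^q \, d\mu \biggr)^{1/q}
\end{equation*}
holds for every ball $B$ with $\la B \subset B_0$. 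Since $X$ is Lindel\"of, it suffices to prove the conclusion q.e.\ inside a single such $B_0$.

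Second, introduce the local noncentred maximal function $M^*g_u^q(x) = \sup\{\vint_B g_u^q \, d\mu : x \in B,\ \tfrac{5}{2}\la B \subset B_0\}$, which is bounded on $L^{p/q}(B_0)$ by Proposition \ref{prop-maximal-fn} (since $p/q > 1$); in particular $M^*g_u^q < \infty$ $\mu$-a.e.\ in $B_0$. At any $x_0$ with $M^*g_u^q(x_0) < \infty$, telescope the $(p,q)$-Poincar\'e inequality along dyadic balls $B_k = B(x_0, r/2^k)$:
\begin{equation*}
|u_{B_{k+1}} - u_{B_k}| \le C \vint_{B_{k+1}} |u - u_{B_k}| \, d\mu \le Cr_k (M^*g_u^q(x_0))^{1/q}.
\end{equation*}
The partial sums form a geometric series, so $\ut(x_0) := \lim_{r \to 0} u_{B(x_0, r)}$ exists; the $(p, q)$-Poincar\'e inequality itself also gives $\vint_{B_k} |u - u_{B_k}|^p \, d\mu \le Cr_k^p (M^*g_u^q(x_0))^{p/q} \to 0$, and combining these,
\begin{equation*}
\lim_{r \to 0} \vint_{B(x_0, r)} |u - \ut(x_0)|^p \, d\mu = 0.
\end{equation*}

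The main obstacle is upgrading the above from ``$\mu$-a.e.'' to ``quasieverywhere'' in $B_0$. Concretely, the set of non-$L^p$-Lebesgue points of $u$ (as an everywhere-defined representative) is contained in
\begin{equation*}
\{x \in B_0 : M^*g_u^q(x) = \infty\} \cup \{x \in B_0 : \ut(x) \text{ exists and } \ut(x) \ne u(x)\},
\end{equation*}
both of which have $\mu$-measure zero. The standard route via density of continuous or Lipschitz functions is deliberately avoided here (those density results are treated only later in the paper), so one must instead establish directly, within the Newtonian framework, a capacity bound of the form
\begin{equation*}
\Cp(\{x \in B_0 : M^*g_u^q(x) > t\}) \le Ct^{-p/q} \|g_u\|_{L^p(B_0)}^p.
\end{equation*}
The natural approach is to cover each super-level set by a Whitney-type family of balls on which the $(p,q)$-Poincar\'e inequality forces a substantial lower bound on $\int g_u^p$, and to build a Newtonian test function of small $\Np$-norm from cutoffs on these balls; letting $t \to \infty$ then kills the capacity of the first exceptional set. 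A parallel argument, recognising $\{\ut \ne u\}$ as an oscillation set controlled by the same maximal-function estimates, handles the second set. Taking a countable cover of $X$ by balls $B_0$ and invoking countable subadditivity of $\Cp$ then completes the proof.
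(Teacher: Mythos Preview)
Your reduction of (a) to (b) and your use of Theorem~\ref{thm-(q,p)-PI} to obtain a local $(p,q)$-Poincar\'e inequality are correct and match the paper. The difficulty, as you correctly identify, is the passage from ``a.e.'' to ``q.e.'', and here your proposal has genuine gaps.

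First, the capacity bound $\Cp(\{M^*g_u^q>t\})\le Ct^{-p/q}\|g_u\|_{L^p}^p$ is not delivered by the Whitney-cover-plus-cutoffs sketch. On a ball $B$ with $\vint_B g_u^q>t$ you do get $\int_B g_u^p>t^{p/q}\mu(B)$, but the capacity of $B$ behaves like $\mu(B)r_B^{-p}$, and nothing in your argument controls the factor $r_B^{-p}$; summing over the cover therefore bounds $\sum\mu(B_i)$ (a measure estimate) rather than $\sum\mu(B_i)r_i^{-p}$. Nor can you use $(M^*g_u^q)^{1/q}/t^{1/q}$ as a test function, since $g_u$ is merely in $L^p$ and there is no reason its maximal function should lie in $\Np$.

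Second, and more seriously, the set $\{\ut\ne u\}$ is \emph{not} ``controlled by the same maximal-function estimates'': the maximal function governs the convergence of the averages $u_{B(x,r)}$, but says nothing about the pointwise value $u(x)$. Showing that this null set has zero capacity is exactly where the everywhere-defined Newtonian representative must enter, and your ``parallel argument'' does not explain how.

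The paper handles both difficulties at once with a single device. It sets
\[
v_j(x)=\biggl(\vint_{B(x,r_j)}|u-u(x)|^p\,d\mu\biggr)^{1/p},
\qquad v=\limsup_{j\to\infty}v_j,
\]
so that $v(x)=0$ is precisely the $L^p$-Lebesgue point condition at $x$, built directly from the pointwise value $u(x)$. By Lebesgue differentiation, $v=0$ a.e. The key computation estimates $|v_j(x)-v_j(y)|$ along a short curve $\gamma$: using the triangle inequality for the normalized $L^p$-norm together with the $(p,q)$-Poincar\'e inequality on a common ball $B\supset\gamma$, one obtains
\[
|v_j(x)-v_j(y)|\le C\int_\gamma\bigl((M^*_{B_0,B_0}g_u^q)^{1/q}+g_u\bigr)\,ds.
\]
Since $q<p$, the maximal term lies in $L^p(B_0)$ by Proposition~\ref{prop-maximal-fn}, so $v$ has a \p-weak upper gradient in $L^p$ and hence $v\in\Np(B_0)$. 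A Newtonian function that vanishes a.e.\ vanishes q.e., so $v=0$ q.e., which is the conclusion. Note how this argument never separates the two exceptional sets and never needs a capacitary weak-type inequality for the maximal function; the Newtonian structure is used once, through the fact that $\Np$-functions equal to zero a.e.\ are zero q.e.
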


On metric spaces with global assumptions such results have been
obtained by Kinnunen--Latvala~\cite{KiLa02} and
Bj\"orn--Bj\"orn--Parviainen~\cite{BBP}.
Traditionally (for Sobolev functions), as well as in \cite{KiLa02} and \cite{BBP},
this is shown using the density of 
continuous functions.
Here we offer a different approach 
based on the fact that Newtonian  functions
are 
more precisely defined than arbitrary a.e.-representatives.

Note that, by Theorem~\ref{thm-locLip-dense-Om} below,
locally Lipschitz functions are dense in $\Nploc(X)$
under the assumptions in Theorem~\ref{thm-Leb-pt},
even though we do not use this fact here.
In case~\ref{Leb-a} it also follows
from Theorem~\ref{thm-when-qcont-new} below that the 
functions
in $\Nploc(X)$ are quasicontinuous. 
Even though this is not known in case~\ref{Leb-b}, 
they are still, by their Newtonian definition,
more precisely defined than arbitrary a.e.-representatives, 
which enables us to prove the existence of Lebesgue points q.e.

\begin{proof}[Proof of Theorem~\ref{thm-Leb-pt}]
Theorem~\ref{thm-loc-cpt-q-PI} shows that the assumptions~\ref{Leb-a}
imply~\ref{Leb-b}.
Thus in both cases we can 
consider a ball $B_0$ 
such that $u \in \Np(B_0)$
and a $q$-Poincar\'e inequality and the doubling property for $\mu$ hold
within $B_0$,
where $q<p$ depends on $B_0$.
Theorem~\ref{thm-(q,p)-PI} shows
that if $q<p$ is chosen close enough to $p$, then 
the Sobolev--Poincar\'e inequality 
\begin{equation}   \label{eq-(q,q*)-PI}
        \biggl(\vint_{B} |u-u_B|^{p} \,d\mu\biggr)^{1/p}
        \le C r_B \biggl( \vint_{\la B} g^{q} \,d\mu \biggr)^{1/q},
\end{equation}
with some $\la\ge2$, holds
for all balls $B$ with $\frac52\la B \subset B_0$ and 
for every upper gradient $g$ 
of $u$. 

For $x\in B_0$, let $r_j=2^{-j}$ and $v(x) = \limsup_{j\to\infty} v_j(x)$, where
\begin{equation}   \label{eq-def-v}
v_j(x) = \biggl( \vint_{B(x,r_j)} |u-u(x)|^{p}\,d\mu \biggr)^{1/p},
         \quad j=0,1,\ldots.
\end{equation}
Note that $u\in L^{p}(B_0)$ and hence,
by the Lebesgue differentiation theorem
(Theorem~\ref{thm-Leb-ae}),
$v=0$ a.e.\ in $B_0$.
We shall show that $v\in \Np(B_0)$ and hence $v=0$ q.e.\ in $B_0$, by 
\cite[Proposition~1.59]{BBbook}.

To this end, let $\ga \colon [0,l_\ga] \to B_0$ be a nonconstant
rectifiable
curve (parameterized by arc length)
and $g \in L^p(B_0)$ be an upper gradient of $u$.
By splitting $\ga$ into parts, if necessary, and by considering sufficiently
large $j$, we can assume that $\frac{1}{2}r_j \le l_\ga \le r_j$ and that
$\tfrac52\la B\subset B_0$, 
where $B:=B(x,2r_j)$ with $x=\ga(0)$ and $y=\ga(l_\ga)$
being the endpoints of~$\ga$.
Since $u\in L^{p}(B_0)$, both $v_j(x)$ and  $v_j(y)$ are finite, and hence
\begin{align*}
|v_j(x)-v_j(y)| 
& 
\le \bigl|v_j(x)-|u_B-u(x)|\bigr| + \bigl|v_j(y)-|u_B-u(y)|\bigr| 
                  + |u(x)-u(y)| \\
& 
\le \biggl( \vint_{B(x,r_j)} |u-u_B|^{p}\,d\mu \biggr)^{1/p}
        + \biggl( \vint_{B(y,r_j)} |u-u_B|^{p}\,d\mu \biggr)^{1/p}
        + \int_\ga g\,ds,
\end{align*}
where we have used that by the triangle inequality for the 
normalized $L^p$-norm,
\begin{align*}
\bigl|v_j(x)-|u_B-u(x)|\bigr| 
\kern -2em & 
\\ 
&
= \biggl| \biggl( \vint_{B(x,r_j)} |u-u(x)|^{p}\,d\mu \biggr)^{1/p}
- \biggl( \vint_{B(x,r_j)} |u_B-u(x)|^{p}\,d\mu \biggr)^{1/p} \biggr| \\
&\le \biggl( \vint_{B(x,r_j)} |(u-u(x))-(u_B-u(x))|^{p}\,d\mu \biggr)^{1/p},
\end{align*}
and similarly for $\bigl|v_j(y)-|u_B-u(y)|\bigr|$. 

The local doubling property within $B_0$ and
the Sobolev--Poincar\'e inequality~\eqref{eq-(q,q*)-PI} then
imply that 
\begin{align}
|v_j(x)-v_j(y)| &\le C' \biggl( \vint_B |u-u_B|^{p}\,d\mu \biggr)^{1/p} 
                      + \int_\ga g\,ds \nonumber \\
    & \le C'' r_j \biggl( \vint_{\la B} g^{q}\,d\mu 
                   \biggr)^{1/q} + \int_\ga g\,ds  \label{eq-q-Leb}\\
           &\le C'' r_j \inf_{z \in \la B} g_M(z)
                   + \int_\ga g\,ds  \nonumber\\
     &\le C''' \int_\ga (g_M+g) \,ds, \nonumber
\end{align}
where  $C'''$ is independent of $j$ and
$g_M^q:=M^*_{B_0,B_0}g^q$ is the noncentred local maximal function defined by
\eqref{eq-def-local-max-fn}.
Glueing together all the parts of $\ga$ and by applying 
\cite[Lemma~1.52]{BBbook} we conclude that 
$C'''(g_M+g)$ is a \p-weak upper gradient of $v$ in $B_0$.
Since $q<p$, the noncentred local
maximal operator is bounded on $L^{p/q}(B_0)$,
by Proposition~\ref{prop-maximal-fn},
which yields that
\[
\int_{B_0} g_M^p \, d\mu
\le C_0 \int_{B_0} g^p \,d\mu, 
\]
and hence $v\in \Np(B_0)$, as required.

As $X$ is Lindel\"of there is a countable cover of $X$ by balls 
$B_0$ of the type above,
and since
the capacity is countably subadditive
we conclude 
the
existence of Lebesgue points q.e.
\end{proof}

\begin{remark}   \label{rem-Lq-Leb}
  Since Lebesgue points are of a local nature, 
the proof of Theorem~\ref{thm-Leb-pt} can be modified so that
\begin{equation}     \label{eq-Leb-q(x)}
\lim_{r \to 0} \vint_{B(x,r)} |u-u(x)|^{q(x)}\, d\mu=0
\end{equation}
holds for q.e.\ $x$ and all 
\[
q(x)< \begin{cases} 
    \displaystyle \frac{s(x)p}{s(x)-p},  &\text{if } s(x)>p, \\ 
    \infty,  &\text{if } s(x)\le p,   \end{cases} 
\]
where
\[
s(x) = \lim_{r\to0} \inf \{s>0: \eqref{eq-local-dim-cond} 
\text{ holds for all balls } B'\subset B\subset B(x,r) \},
\]
cf.\ Remark~\ref{rem-s-and-q}.
If $\mu$ is semiuniformly locally doubling, then $\shat:=\sup_x s(x) < \infty$,
and we can use $\shat$ instead of $s(x)$ to find a common $q=q(x)>p$
so that 
\eqref{eq-Leb-q(x)} holds for q.e.\ $x$.

In fact, if $s(x)$ is ``attained'' then it is even possible
to reach the borderline case,
as in  Heinonen--Koskela--Shanmugalingam--Tyson~\cite[Theorem~9.2.8]{HKSTbook}.
More precisely, 
if there is $r_0>0$ such that
\[
 \eqref{eq-local-dim-cond} 
 \text{ holds for all balls } B'\subset B\subset B(x,r_0)
 \text{ with } s=s(x)>p,
\]
then we may let $q(x)=s(x)p/(s(x)-p)$.
To see this, one uses
the following estimate with $q=q(x)$ and $y \in B(x,r_0)$,
\begin{align*}
&
\limsup_{r\to0} \biggl( \vint_{B(y,r)} |u-u(y)|^{q}\,d\mu \biggr)^{1/q} 
\\
&
\quad \quad \quad
\le \limsup_{r\to0}\biggl( \vint_{B(y,r)} |u-u_{B(y,r)}|^{q}\,d\mu \biggr)^{1/q}
+ \limsup_{r\to0}|u_{B(y,r)}-u(y)| \\
& 
\quad \quad \quad
\le C \limsup_{r\to0}\biggl( r^p \vint_{B(y,r)} g_u^p\,d\mu \biggr)^{1/p}
+ \limsup_{r\to0} \vint_{B(y,r)} |u-u(y)|\,d\mu,
\end{align*}
where the $(q,p)$-Poincar\'e inequality within $B(x,r_0)$ is provided
by Theorem~\ref{thm-(q,p)-PI}.
The second term on the right-hand side
tends to $0$ q.e., as we already know that  $u$ has $L^1$-Lebesgue
points q.e., while the first term tends to $0$ q.e.\ in $B(x,r_0)$ by 
Lemma~9.2.4 in 
Heinonen--Koskela--Shanmugalingam--Tyson~\cite{HKSTbook}.

In particular, if for each $x \in X$ there is $r>0$ such that
\eqref{eq-local-dim-cond} holds for all balls 
$B'\subset B\subset B(x,r)$ with the same $s>0$ (independent of $x$),
then $u$ has
$L^q$-Lebesgue points q.e.,  with
$q=sp/(s-p)$ for $p<s$ and all $q<\infty$ for $p\ge s$,
provided that the assumptions in Theorem~\ref{thm-Leb-pt} are
fulfilled.
\end{remark}

\begin{remark}
The proof of Theorem~\ref{thm-Leb-pt} shows that the assumptions can be
further relaxed with a somewhat weaker conclusion.
Namely, if $\mu$ is locally doubling and only supports a local \p-Poincar\'e
inequality (which need not improve to a $q$-Poincar\'e inequality as
$X$ is not necessarily locally compact), then it can be
verified that the function $v$, defined by~\eqref{eq-def-v}, belongs to
$\Nq(B_0)$ for every $1\le q<p$.
To see this one replaces
$q$ by $p$ in \eqref{eq-q-Leb}, 
and then uses the $L^1$ to weak-$L^1$ boundedness \eqref{eq-max-weak-L1}
of the noncentred local
maximal operator, which yields that $g_M$ belongs to weak-$L^p(B_0)$ and thus
to $L^q(B_0)$ for all $q<p$.

An immediate consequence is
that $v=0$ outside a set of zero $q$-capacity
and hence, every $u\in\Np\loc(X)$ has Lebesgue points $q$-quasieverywhere.
Since it is not clear whether $\mu$ supports a local $q$-Poincar\'e inequality
for some $q<p$, this cannot be deduced directly 
from the inclusion $\Nploc(X) \subset \Nqloc(X)$
as we have no $q$-quasieverywhere Lebesgue point result available
for functions in  $\Nqloc(X)$ under these assumptions.
\end{remark}

\section{Density of Lipschitz functions}

Density of smooth functions is a useful property of
Sobolev spaces, with many important consequences
(which we will discuss in Section~\ref{sect-qcont}).
In metric spaces the smoothest functions to consider
are (locally) Lipschitz functions.
There are two types of density results for $\Np(X)$ 
in the literature.
The first one is due to Shanmugalingam~\cite{Sh-rev}
(it can also be found in \cite[Theorem~5.1]{BBbook}
and 
\cite[Theorem~8.2.1]{HKSTbook}).

\begin{thm} \label{thm-dense-Nages}
\textup{(Shanmugalingam~\cite[Theorem~4.1]{Sh-rev})}
If $\mu$ is globally doubling and 
supports a global \p-Poincar\'e inequality, then
Lipschitz functions are dense in $\Np(X)$.
\end{thm}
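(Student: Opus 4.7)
The plan is to use a discrete convolution at a small scale, following Semmes/Haj\l asz--Koskela (this is also the approach in \cite[Theorem~5.1]{BBbook}). Fix $u \in \Np(X)$ with minimal \p-weak upper gradient $g \in L^p(X)$. For each $r > 0$, use global doubling to pick a maximal $r$-separated set $\{x_i\}_i \subset X$ and set $B_i = B(x_i, r)$; then $\{B_i\}_i$ are pairwise disjoint while $\{2B_i\}_i$ cover $X$ with overlap bounded by some $N$ depending only on the doubling constant. Choose a Lipschitz partition of unity $\{\phi_i\}_i$ subordinate to $\{2B_i\}_i$ with $\sum_i \phi_i \equiv 1$ and $\Lip \phi_i \le C/r$, and define the discrete convolution
\[
u_r(x) := \sum_i \phi_i(x)\, u_{B_i}.
\]
The goal is to show that $u_r$ is locally Lipschitz and $u_r \to u$ in $\Np(X)$ as $r \to 0$, and then to promote local Lipschitz continuity to global via a standard cutoff.

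For the Lipschitz property, use $\sum_j (\phi_j(x) - \phi_j(y)) = 0$ to write, for $x, y \in 2B_i$,
\[
u_r(x) - u_r(y) = \sum_{j:\,2B_j \cap 2B_i \ne \emptyset} (\phi_j(x) - \phi_j(y))(u_{B_j} - u_{B_i}).
\]
A short telescoping argument with the global \p-Poincar\'e inequality on a fixed enlargement $\la B_i$ gives $|u_{B_j} - u_{B_i}| \le Cr (\vint_{\la B_i} g^p\, d\mu)^{1/p}$, which together with $\Lip \phi_j \le C/r$ and bounded overlap shows that $u_r$ is Lipschitz on $2B_i$ with constant controlled by $(\vint_{\la B_i} g^p\, d\mu)^{1/p}$. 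The $L^p$-convergence then follows from $\sum_i \phi_i \equiv 1$, H\"older's inequality, bounded overlap and the Poincar\'e inequality on each ball:
\[
\int_X |u - u_r|^p\, d\mu \le N^{p-1} \sum_i \int_{2B_i} |u - u_{B_i}|^p\, d\mu \le C r^p \int_X g^p\, d\mu,
\]
which tends to $0$ with $r$.

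The main obstacle is producing a \p-weak upper gradient $h_r$ of $u - u_r$ whose $L^p$-norm is controlled. The natural candidate
\[
h_r := g\,\chi_X + C \sum_i \chi_{2B_i} \Bigl(\vint_{\la B_i} g^p\, d\mu\Bigr)^{1/p}
\]
is a \p-weak upper gradient of $u - u_r$ by a chaining argument along rectifiable curves pieced out of the Poincar\'e-based oscillation bounds on each ball, and bounded overlap together with the $L^p$-maximal inequality (Proposition~\ref{prop-maximal-fn}, for $p > 1$) yields only the uniform bound $\|h_r\|_{L^p(X)} \le C\|g\|_{L^p(X)}$. This does not show $\|h_r\|_{L^p} \to 0$, so convergence in $\Np(X)$-norm is extracted indirectly: for $p > 1$ the sequence $\{u_r\}$ is bounded in the reflexive space $\Np(X)/{\sim}$, so a subsequence converges weakly, and by the already established $L^p$-convergence the weak limit equals $u$; Mazur's lemma then produces convex combinations of the $u_r$, still locally Lipschitz, converging to $u$ in the $\Np(X)$-norm. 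The case $p = 1$ requires a separate Luzin-type argument via the Haj\l asz estimate $|u(x) - u(y)| \le C d(x,y)(M^*g(x) + M^*g(y))$ on sublevel sets of $M^*g$, combined with McShane extension and the weak-$(1,1)$ maximal inequality. A final cutoff argument truncates each locally Lipschitz approximant into a globally Lipschitz one.
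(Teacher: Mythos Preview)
Your discrete-convolution approach runs into a genuine gap at the step where you invoke reflexivity of $\Np(X)/{\sim}$ and Mazur's lemma for $p>1$. The theorem as stated does not assume completeness of $X$, and reflexivity of $\Np(X)$ is a deep result (Cheeger~\cite{Cheeg}, or Ambrosio--Colombo--Di Marino~\cite{AmbCD}) that requires completeness; it is not available here. Even if one granted completeness, using reflexivity to prove Shanmugalingam's density theorem is backwards in spirit, since those reflexivity proofs lie at least as deep as the density result you are trying to establish. Note also that Mazur's lemma applied in $L^p\times L^p$ to the pairs $(u-u_r,h_r)$ would only give convex combinations whose upper gradients converge to some limit $h$, not to $0$; one genuinely needs reflexivity of $\Np$ itself for the argument you outline, and that is not at hand.

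The paper does not reprove this theorem (it is cited from~\cite{Sh-rev} and~\cite[Theorem~5.1]{BBbook}), but the technique it uses in Lemma~\ref{lem-Lip-dense-Np-2B} for the local version is exactly the Luzin-type argument you relegate to the case $p=1$: after truncating to $|u|\le1$, set $E_t=\{M^*g^p>t^p\}$, show by telescoping with the Poincar\'e inequality that $\bar u(x)=\lim_{r\to0}u_{B(x,r)}$ is $Ct$-Lipschitz on $X\setminus E_t$, extend by McShane to a globally $Ct$-Lipschitz $u_t$ with $|u_t|\le1$, and use the weak-$(1,1)$ bound $t^p\mu(E_t)\le C\int_{E_t}g^p\,d\mu\to0$ to conclude $\|u-u_t\|_{\Np(X)}\to0$. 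This works uniformly for all $p\ge1$, produces \emph{globally} Lipschitz approximants directly (so no final cutoff is needed), and avoids both reflexivity and the indirect Mazur step. You should use that argument throughout, not only for $p=1$.
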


The following result was recently obtained by 
Ambrosio--Colombo--Di Marino~\cite{AmbCD}
and Ambrosio--Gigli--Savar\'e~\cite{AmbGS}.
In fact, it is not explicitly spelled out in either paper, 
but it is a direct consequence
of a combination of results in the two papers.
Below we 
explain this in some detail, see Remark~\ref{rmk-Amb}.
Note that by density we always mean norm-density in the $\Np$ norm, 
with the exception of Theorem~\ref{thm-Lip-weak-dense}.

\begin{thm}
\textup{(Ambrosio et al.~\cite{AmbCD},~\cite{AmbGS})} 
\label{thm-AmbCDGS}
Assume that $X$ 
is a complete globally doubling metric space
and that $p >1$.
Then Lipschitz functions are dense in $\Np(X)$.
\end{thm}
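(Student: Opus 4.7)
The plan is to introduce an auxiliary Sobolev-type space in which Lipschitz functions are dense by construction, and then identify it with $\Np(X)$. For $u \in \Lp(X)$ I would call $g \in \Lp(X)$ a \emph{relaxed slope} of $u$ if there exist $u_n \in \Lip(X) \cap \Lp(X)$ and $g_n \in \Lp(X)$ with $u_n \to u$ in $\Lp$, with $\Lip u_n \le g_n$ pointwise (here $\Lip u_n$ is the pointwise Lipschitz constant, which is an upper gradient of $u_n$), and with $g_n \rightharpoonup g$ weakly in $\Lp$. Let $W^{1,p}(X)$ be the collection of $u \in \Lp(X)$ admitting at least one relaxed slope, and let $|\nabla u|_*$ be its pointwise essential infimum; by convexity and Mazur's lemma this infimum is itself a relaxed slope, and a diagonal argument upgrades the weak $\Lp$-convergence to strong $\Lp$-convergence of the slopes to $|\nabla u|_*$. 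In particular, Lipschitz functions are dense in $W^{1,p}(X)$ with respect to the graph norm $\|u\|_{\Lp}+\||\nabla u|_*\|_{\Lp}$ by the very definition.

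The heart of the proof is therefore the identification $W^{1,p}(X)=\Np(X)$, with $|\nabla u|_* = g_u$ $\mu$-a.e. One inclusion, $g_u \le |\nabla u|_*$, follows from the $\Lp$-lower semicontinuity of the minimal $p$-weak upper gradient, which is obtained via Fuglede's lemma (on the exceptional curve family), together with the fact that $\Lip u_n$ is an upper gradient of the Lipschitz function $u_n$, so any $\Lp$-approximation by Lipschitz functions with slopes bounded in $\Lp$ produces a weak upper gradient in the limit.

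The reverse inclusion $|\nabla u|_* \le g_u$ is the substantive content and is where the hypotheses $p>1$ and completeness + global doubling enter essentially. I would employ the test-plan machinery of Ambrosio--Gigli--Savar\'e: in a complete doubling space with $p>1$, the $p$-weak upper gradient $g_u$ coincides with the minimal $g \in \Lp(X)$ for which $|u(\gamma_1)-u(\gamma_0)| \le \int_\gamma g\, ds$ holds for every $p$-test plan on the curve space of $X$. Given this, one produces Lipschitz approximations of $u$ with controlled slopes by applying the Hopf--Lax semigroup to $u$ (or, more robustly, to a truncation/regularization of $u$); the key energy-dissipation identity for $u_t := Q_t u$ then yields, after integration and passing to $t\to 0^+$, an $\Lp$-estimate on $\Lip u_t$ in terms of $g_u$. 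Weak compactness in $\Lp$ (using $p>1$) provides a weak limit which is a relaxed slope no larger than $g_u$.

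The main obstacle is this last identification; everything else is either definitional or follows from standard lower-semicontinuity arguments. The delicate points are (i) that the Hopf--Lax semigroup in a general complete doubling space, without any Poincar\'e inequality, has the correct Lipschitz regularization and energy-dissipation properties, and (ii) that the test-plan and Newton formulations of the weak upper gradient coincide in this generality; these are precisely the contributions of \cite{AmbGS} and \cite{AmbCD}. Once the identification $W^{1,p}(X)=\Np(X)$ is in hand, the density statement of the theorem is automatic from the construction of $W^{1,p}(X)$.
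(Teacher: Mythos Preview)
Your outline has the right ingredients but misplaces the essential use of the doubling hypothesis, and this creates a genuine gap.

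The claim that ``Lipschitz functions are dense in $W^{1,p}(X)$ with respect to the graph norm $\|u\|_{\Lp}+\||\nabla u|_*\|_{\Lp}$ by the very definition'' is not justified. What the definition together with Mazur's lemma (applied in $\Lp$ to the slopes) yields is a sequence of Lipschitz functions $\tilde u_n\to u$ in $\Lp$ with $\Lip\tilde u_n\to|\nabla u|_*$ strongly in $\Lp$. This is an \emph{energy-type} approximation, not norm convergence in $W^{1,p}$ (equivalently $\Np$): there is no reason why $|\nabla(\tilde u_n-u)|_*$, i.e.\ $g_{\tilde u_n-u}$, should tend to $0$ in $\Lp$; subadditivity only gives $g_{\tilde u_n-u}\le g_{\tilde u_n}+g_u$, which merely stays bounded. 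This is precisely the gap between the weak density recorded in Theorem~\ref{thm-Lip-weak-dense} and the norm density asserted in Theorem~\ref{thm-AmbCDGS}.

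Relatedly, you locate the use of global doubling in the identification $|\nabla u|_*=g_u$, but by \cite{AmbGS} (see Remark~\ref{rmk-Amb}) that identification, via the Hopf--Lax and test-plan machinery you invoke, requires only completeness and $p>1$; doubling plays no role there. As written, your argument therefore never actually uses the doubling hypothesis, which should raise a flag. The paper's route, spelled out in Remark~\ref{rmk-Amb}, is different on exactly this point: the identification from \cite{AmbGS} gives the energy approximation of Theorem~\ref{thm-Lip-weak-dense}, while \cite[Corollary~7.5]{AmbCD} uses the global doubling of $X$ to prove that $\hNp(X)$ is \emph{reflexive}. Reflexivity is the missing step that closes your argument: the bounded sequence $(\tilde u_n)$ then has a weakly convergent subsequence in $\Np(X)$, and Mazur's lemma applied \emph{in $\Np(X)$} (not merely in $\Lp$) produces Lipschitz convex combinations converging to $u$ in the $\Np$-norm.
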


The main difference in 
these two results is that  the former assumes doubling 
for $\mu$ (and not just for $X$) together with
a Poincar\'e inequality, 
while  the latter requires
completeness and $p>1$. 
Note that even though the doubling property and the Poincar\'e 
inequality extend from $X$ to its closure $\itoverline{X}$, it need not
be true that $\Np(\itoverline{X})=\Np(X)$, 
cf.\ \cite[Lemma~8.2.3]{HKSTbook}.
In other words, completeness (or at least local compactness) is not
a negligible assumption.
Thus both results have their pros and cons.
Our aim in this  section is to extend 
both of these results to local assumptions and to combine them into
a unified result (when $p>1$).

\begin{remark} \label{rmk-Lip-dense}
Without both completeness and a global Poincar\'e inequality, 
Lipschitz functions are not necessarily
dense in $\Np(X)$, consider e.g.\ $X=\R\setm\{0\}$ or
the slit disc in $\R^2$, both of which support a local 1-Poincar\'e
inequality.
This also shows that the completeness assumption
in Theorem~\ref{thm-AmbCDGS} cannot be dropped nor replaced by
local compactness.

It is therefore natural to obtain
density of locally Lipschitz functions in most of our results below.
It should be mentioned that there is no known 
example of a metric space  $X$ such that 
locally Lipschitz 
functions are not dense in $\Np(X)$.
(A function $u: X \to \R$ is \emph{locally Lipschitz} 
if for every $x \in X$ there is $r>0$ such that $u|_{B(x,r)}$ is Lipschitz.)
\end{remark}

The following is a local
generalization of Theorem~\ref{thm-dense-Nages}.

\begin{thm} \label{thm-locLip-dense-Om}
If $\mu$ is locally doubling and 
supports a local \p-Poincar\'e inequality,
then locally Lipschitz functions  
are dense in $\Np\loc(X)$.
\end{thm}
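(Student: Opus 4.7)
The strategy is to localize Shanmugalingam's classical proof of Theorem~\ref{thm-dense-Nages} on each ball where a local Poincar\'e inequality is available, and then to stitch these local approximations together using a locally Lipschitz partition of unity on $X$. Fix $u\in\Nploc(X)$.

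\emph{Local Lipschitz approximation.} For each $x_0\in X$, the local assumptions provide a ball $\tilde B_{x_0}$ in which $\mu$ is doubling, the \p-Poincar\'e inequality holds, and $u\in\Np(\tilde B_{x_0})$. Theorem~\ref{thm-(q,p)-PI} then upgrades the \p-Poincar\'e inequality to a $(p,p)$-Sobolev--Poincar\'e inequality on a smaller ball $B_{x_0}':=B(x_0,r_{x_0}')\subset\tilde B_{x_0}$. Inside $B_{x_0}'$ I run Shanmugalingam's Lipschitz truncation: for each $t>0$, set $E_t=\{x\in B_{x_0}':M^*(g_u^p)(x)^{1/p}\le t\}$, where $M^*$ is the noncentred local maximal operator from Proposition~\ref{prop-maximal-fn}. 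A standard chain-of-balls telescoping, based on the $(p,p)$-Poincar\'e inequality with chains confined to $B_{x_0}'$, shows that $u|_{E_t}$ is $Ct$-Lipschitz outside a set of zero \p-capacity; the existence of Lebesgue points q.e.\ from Theorem~\ref{thm-Leb-pt} ensures that this pointwise statement is meaningful. McShane extension then produces a Lipschitz function $u_t$ on $B_{x_0}'$ with $u_t=u$ on $E_t$, and the strong-$L^p$ and weak-$L^1$ bounds of Proposition~\ref{prop-maximal-fn} applied to $g_u^p\in L^1(B_{x_0}')$ yield $\|u-u_t\|_{\Np(B_{x_0}')}\to 0$ as $t\to\infty$.

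\emph{Patching.} Since $X$ is separable (hence Lindel\"of) and metric (hence paracompact), the cover $\{B_{x_0}'\}_{x_0\in X}$ admits a countable, locally finite open refinement with a subordinate locally Lipschitz partition of unity $\{\phi_i\}_{i=1}^\infty$, where $\supp\phi_i\subset B_{x_i}'$ for suitable $x_i\in X$. Given a prescribed ball $B\subset X$ and $\eps>0$, the local step produces Lipschitz functions $v_i$ on $B_{x_i}'$ with $\|u-v_i\|_{\Np(B_{x_i}')}<\eps_i$, for any preassigned $\eps_i>0$. Define $v=\sum_i\phi_i v_i$, which is well defined and locally Lipschitz on $X$ by local finiteness. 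Writing $u-v=\sum_i\phi_i(u-v_i)$ and using the Leibniz-type upper gradient estimate $g_{\phi_i(u-v_i)}\le\phi_i g_{u-v_i}+\Lip(\phi_i)|u-v_i|$, together with the local boundedness of $\phi_i$ and $\Lip(\phi_i)$ on $B$, one sees that only finitely many indices contribute on $B$ and the error $\|u-v\|_{\Np(B)}$ can be made arbitrarily small by choosing the $\eps_i$ small enough. Running this for a countable exhaustion of $X$ by balls yields a sequence of locally Lipschitz functions converging to $u$ in $\Nploc(X)$.

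\emph{Main obstacle.} The most delicate point is Shanmugalingam's truncation argument in the local setting: the telescoping estimate that turns ``small $g_u$'' at a point into a Lipschitz bound for $u$ requires chains of balls whose $\lambda$-dilations all lie inside the Poincar\'e ball $\tilde B_{x_0}$. This is precisely why Proposition~\ref{prop-maximal-fn} is formulated with the $\tfrac52$-factor and why the improved $(p,p)$-Poincar\'e inequality from Theorem~\ref{thm-(q,p)-PI} is needed in place of the raw \p-Poincar\'e inequality; without the self-improvement one would lose control of the $L^p$-bounds through the maximal operator. Once the local step is in place, the partition-of-unity patching is essentially routine for density in $\Nploc$, since only finitely many terms contribute on any given ball.
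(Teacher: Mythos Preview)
Your overall strategy---localize Shanmugalingam's Lipschitz truncation to balls where the assumptions hold, then patch with a locally Lipschitz partition of unity---is exactly the paper's approach (see Lemma~\ref{lem-Lip-dense-Np-2B} and the proof of Theorem~\ref{thm-locLip-dense-Om}). There is, however, a genuine gap in your local step: you invoke Theorem~\ref{thm-Leb-pt} to obtain Lebesgue points q.e., but that theorem requires $p>1$ together with local compactness (or a local $q$-Poincar\'e inequality for some $q<p$), neither of which is assumed in Theorem~\ref{thm-locLip-dense-Om}. As written, your argument therefore fails for $p=1$ and for spaces that are not locally compact. The paper avoids this by never appealing to the actual pointwise values of $u$ on the good set: one works instead with $\bar u(x)=\lim_{r\to0}u_{B(x,r)}$, which the telescoping shows to exist and to be $Ct$-Lipschitz everywhere on your set $E_t$, and which equals $u$ a.e.\ by the ordinary Lebesgue differentiation theorem (Theorem~\ref{thm-Leb-ae}, needing only local doubling). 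That a.e.\ identification is all that is required for the $\Np$-norm estimate.

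Two smaller remarks. The $(p,p)$-self-improvement from Theorem~\ref{thm-(q,p)-PI} is unnecessary: the Lipschitz bound on $\bar u$ and the error control $t^p\mu(B_{x_0}'\setm E_t)\to0$ follow already from the plain $(1,p)$-Poincar\'e inequality and the weak-$L^1$ maximal estimate~\eqref{eq-max-weak-L1}; the paper's Lemma~\ref{lem-Lip-dense-Np-2B} never upgrades the left-hand exponent, so your claim that ``without the self-improvement one would lose control of the $L^p$-bounds'' is not correct. Also, the paper first reduces to bounded $u$ (truncating to $|u|\le1$), which makes the $L^p$-control of $u-u_t$ on the bad set immediate; you should include this step, since otherwise the McShane extension need not be bounded and $\|u-u_t\|_{L^p}\to0$ is not automatic.
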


Note that if $\Om$ is an open subset of $X$, then the local assumptions
are inherited by $\Om$ and hence we can also directly
conclude the density
of locally Lipschitz functions in $\Nploc(\Om$).
The same is true for Theorem~\ref{thm-locLip-dense-Om-Amb-new} below.

To prove Theorem~\ref{thm-locLip-dense-Om} we will need the following lemma.

\begin{lem}          \label{lem-Lip-dense-Np-2B}
Assume that the \p-Poincar\'e inequality and the 
doubling property for $\mu$ hold within a ball $2B_0$
in the sense of Definitions~\ref{def-local-doubl-mu} and~\ref{def-PI}.
Then every $u\in\Np(2B_0)$ can be approximated in the $\Np(B_0)$-norm 
by Lipschitz functions $u_k$ with support in $2B_0$.
Moreover, $\mu(\{x\in B_0: u(x)\ne u_k(x)\}) \to 0$ as $k\to\infty$.
\end{lem}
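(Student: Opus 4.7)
The plan is to follow Shanmugalingam's Lipschitz truncation method, adapted to the local setting provided by the assumptions within $2B_0$. First, I would fix an upper gradient $g \in L^p(2B_0)$ of $u$ and consider the noncentred local maximal function $M^*:=M^*_{2B_0,2B_0}$ from Proposition~\ref{prop-maximal-fn}. For $t > 0$ let
\[
E_t := \{x \in 2B_0 : M^*(g^p+|u|^p)(x) > t^p\},
\]
so that the weak-$L^1$ estimate~\eqref{eq-max-weak-L1} applied to $g^p+|u|^p \in L^1(2B_0)$ yields $\mu(E_t) \to 0$ as $t \to \infty$.

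Second, I would show that $u$ has a $Ct$-Lipschitz representative on $A_t:=2B_0 \setm E_t$ (possibly intersected with a slightly shrunk ball). The key ingredient is the Haj\l asz-type pointwise bound
\[
|u(x) - u(y)| \le C\, d(x,y)\bigl(M^*(g^p)(x)^{1/p} + M^*(g^p)(y)^{1/p}\bigr),
\]
valid at Lebesgue points $x,y \in A_t$ with $d(x,y)$ small relative to $r_{B_0}$, and obtained by telescoping between the balls $B(x,2^{-j}d(x,y))$ and $B(y,2^{-j}d(x,y))$ through the \p-Poincar\'e inequality within $2B_0$. By Theorem~\ref{thm-Leb-ae} a.e.\ point of $A_t$ is a Lebesgue point, so this produces a $2Ct$-Lipschitz function on a full-measure subset of $A_t$, which extends by McShane to a $2Ct$-Lipschitz function $v_t: X \to \R$.

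Third, I would multiply by the cutoff $\eta(x) := \max\{0,1-\dist(x,B_0)/r_{B_0}\}$, which is $1/r_{B_0}$-Lipschitz, equals $1$ on $B_0$ and is supported in $2B_0$, and set $u_t := \eta v_t$. Since $2B_0$ is bounded, $v_t$ is bounded on $2B_0$ (with bound controlled by $t$), so $u_t$ is genuinely Lipschitz on $X$ with support in $2B_0$; moreover $u_t = u$ on $B_0 \cap A_t$, which gives the ``moreover'' clause since $\mu(E_t) \to 0$. For the $\Np(B_0)$-convergence, the minimal \p-weak upper gradient $g_{u-u_t}$ vanishes a.e.\ on $B_0 \cap A_t$ where $u = u_t$, while on $B_0 \cap E_t$ one has the crude dominant $g + g_{u_t} \le g + C(t + |v_t|/r_{B_0})$. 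Combining Chebyshev's inequality $t^p \mu(E_t) \le \int_{E_t} M^*(g^p+|u|^p)\,d\mu$ with the strong $L^p$-boundedness of $M^*$ from Proposition~\ref{prop-maximal-fn} (applicable because $p \ge 1$; if $p=1$ one uses instead the absolute continuity of the integral $\int g^p\,d\mu$ over $E_t$) gives both $\|u - u_t\|_{L^p(B_0)} \to 0$ and $\|g_{u-u_t}\|_{L^p(B_0)} \to 0$.

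The main obstacle will be making the Lipschitz estimate on $A_t$ fully rigorous: the chaining argument requires that the balls $B(x,2^{-j}d(x,y))$ and their $\la$-dilations lie inside $2B_0$ so that the local \p-Poincar\'e inequality can be invoked. This forces us to restrict to pairs $(x,y)$ sufficiently close relative to $\dist(x,X\setm 2B_0)$; the resulting local-Lipschitz map must then be glued into a genuinely Lipschitz function on all of $A_t$, either by covering $A_t$ with a finite chain of such ``short-range'' pieces (as in the proof of Theorem~\ref{thm-PI}) or by replacing $B_0$ with $\tfrac{1}{2}B_0$ and exploiting Proposition~\ref{prop-doubling-mu-2/3} to guarantee the needed uniform local geometry. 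Once this boundary bookkeeping is handled, the remaining estimates are routine.
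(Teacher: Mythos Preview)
Your plan is essentially the paper's: Shanmugalingam's Lipschitz truncation via the level sets of the maximal function, adapted so that all balls stay inside $2B_0$. The paper resolves your ``main obstacle'' differently from either of your two suggestions: it first truncates so that $|u|\le 1$, then uses Proposition~\ref{prop-doubling-mu-2/3} to cover $B_0$ by \emph{finitely many} balls $B_j$ of radius $r_{B_0}/22\la$ centred in $B_0$, performs the telescoping/McShane step on each $2B_j$ separately (where the relevant dilated balls automatically sit inside $2B_0$), and glues the resulting Lipschitz pieces $u_{t,j}$ via a Lipschitz partition of unity subordinate to $\{2B_j\}$. Your option of chaining ``as in the proof of Theorem~\ref{thm-PI}'' is not available here, since that argument relies on connectedness, which is not assumed in the lemma. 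On the other hand, your own device of including $|u|^p$ in the maximal function already yields a cleaner fix than you seem to realise: at Lebesgue points of $A_t\cap\tfrac{3}{2}B_0$ one has $|u|\le t$, so for pairs at distance at least $r_{B_0}/10\la$ the trivial bound $|u(x)-u(y)|\le 2t$ supplies the missing large-scale Lipschitz estimate, and a single McShane extension (with constant comparable to $t/r_{B_0}$) then suffices, bypassing the partition of unity entirely.
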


\begin{proof}
By Proposition~\ref{prop-doubling-mu-2/3},
$B_0$ can be covered by finitely
many balls $B_j$ with centres in $B_0$ and radii $r' =r_{B_0}/22\la$,
where $\la$ is the dilation constant in the \p-Poincar\'e inequality
within $2B_0$.
Let $\{\phi_j\}_j$ be a Lipschitz partition of unity on 
$\bigcup_j B_j$ subordinate to $2B_j$,
e.g.\ one constructed as in the proof of Theorem~\ref{thm-locLip-dense-Om}
below.

Let $u\in\Np(2B_0)$ be arbitrary and let $g\in L^p(2B_0)$ be an upper gradient
of $u$. 
Noting that $\max\{\min\{u,k\},-k\} \to u$
in $\Np(2B_0)$, as $k \to \infty$, 
we can assume without loss of generality that 
$|u| \le 1$.
Using the noncentred local maximal function in \eqref{eq-def-local-max-fn},
let 
\[
E_t = \{x\in 2B_0: M^*_{2B_0,2B_0} g^p(x)>t^p\}.
\]
Proposition~\ref{prop-maximal-fn}
then implies that 
\begin{equation}   \label{eq-tp-mu-Et-0}
t^p\mu(E_t) \le C_1 \int_{E_t} g^p\,d\mu \to0, \quad \text{as } t\to\infty.
\end{equation}
For a fixed $j$ and $x\in2B_j\setm E_t$,
we get  for all $0<\tfrac12r\le\rho\le r\le 8r'$ that
\begin{align*}
|u_{B(x,\rho)} - u_{B(x,r)}| 
&\le \vint_{B(x,\rho)} |u - u_{B(x,r)}| \,d\mu
        \le C_2 \vint_{B(x,r)} |u - u_{B(x,r)}| \,d\mu \\
     &\le C_3 r \biggl( \vint_{\lambda B(x,r)} g^p \,d\mu \biggr)^{1/p}
        \le C_3 r \bigl(M^*_{2B_0,2B_0} g^p(x)\bigr)^{1/p}
\le C_3tr,
\end{align*}
since for such radii, $\tfrac52\la B(x,r)\subset22\la B_j \subset 2B_0$.
A telescopic argument as in the proof of \cite[Theorem~5.1]{BBbook} then 
shows that the limit $\ub(x)=\lim_{r\to0} u_{B(x,r)}$ exists everywhere in
$2B_j\setm E_t$ and is $C_4t$-Lipschitz therein.
Also, by Lebesgue's differentiation theorem (Theorem~\ref{thm-Leb-ae}), 
$\ub=u$ a.e.\ in $2B_j\setm E_t$.
Using e.g.\ truncated McShane extensions, 
$\ub$ extends to a $C_4t$-Lipschitz function
$u_{t,j}$ on $2B_j$ such that $|u_{t,j}| \le 1$.
Then also $u_t=\sum_j \phi_j u_{t,j}$ equals $u$
a.e.\ in $B_0\setm E_t$.
In view of \cite[Corollary~2.21]{BBbook},
it follows that $(g+C_4t)\chi_{E_t \cap B_0}$ is a \p-weak upper
gradient of $u-u_t$ and hence
\[
\|u-u_t\|_{\Np(B_0)}^p  
   \le 2^p \mu(E_t \cap B_0) + 
     \bigl( \|g\|_{L^p(E_t \cap B_0)} + C_4 t \mu(E_t \cap B_0)^{1/p}
 \bigr)^p.
\]
Since $g\in L^p(2B_0)$, we conclude from~\eqref{eq-tp-mu-Et-0} that
$u_t\to u$ in $\Np(B_0)$. 
By construction, $u_t$ is Lipschitz in $2B_0$ and 
$\supp u_t \subset \bigcup_j 2B_j \subset 2B_0$.
\end{proof}

\begin{proof}[Proof of Theorem~\ref{thm-locLip-dense-Om}]
Let $u\in\Np\loc(X)$.
For every $x\in X$, let $B_x=B(x,r_x)$ be
a ball such that $u \in \Np(B_x)$ and such that 
the \p-Poincar\'e inequality and the 
doubling property for $\mu$ hold within $B_x$.
As $X$ 
is Lindel\"of, we can find 
a countable subcollection such that 
$X=\bigcup_{j=1}^\infty \tfrac14 B_{x_j}$.
Let $B_j=\tfrac14 B_{x_j}$, $j=1,2,\ldots$.

We construct a suitable Lipschitz partition of unity. 
For each $j$ we find $\psi_j\in\Lip(X)$ such that
$\chi_{B_j}  \le \psi_j \le \chi_{2B_j}$.
Let inductively $\phi_1=\psi_1$ and
\[
    \phi_j:= \psi_j \biggl( 1- \sum_{k=1}^{j-1} \phi_k\biggr),
\quad j\ge2.
\]
Then
$    \sum_{k=1}^j \phi_k=1$ in $B_j$,
and hence $\phi_k \equiv 0$ in $B_j$ for $k >j$, so that
\[
    \sum_{k=1}^\infty \phi_k=1 \quad \text{in } B_j.
\]
As this holds for all $B_j$ we see that $\{\phi_j\}_{j=1}^\infty$
is a partition of unity.

Since 
$4B_j = B_{x_j}$, the assumptions of 
Lemma~\ref{lem-Lip-dense-Np-2B} are satisfied for each $2B_j$ (in place 
of $B_0$).
Hence, for every $\eps>0$ and each $j$, there is $v_j\in\Lip(2B_j)$ with
\[
\|u-v_j\|_{\Np(2B_j)} \le \frac{2^{-j}\eps}{1+L_j},
\]
where $L_j$ is the  Lipschitz constant of $\phi_j$.
Then also 
\begin{align*} 
\|\phi_j (u-v_j)\|_{\Np(2B_j)}^p 
    &\le \|u-v_j\|_{L^p(2B_j)}^p 
    + (L_j\|u-v_j\|_{L^p(2B_j)} + \|g_{u-v_j}\|_{L^p(2B_j)})^p  \\
    & \le 2(1+L_j)^p \|u-v_j\|_{\Np(2B_j)}^p
    \le 2^{1-pj} \eps^p.
\end{align*}
As $v:=\sum _{j=1}^\infty \phi_j v_j$ is a locally
finite sum of Lipschitz functions, $v$ is locally Lipschitz. 
Combining this with the above estimate
we conclude that
\[
\|u-v\|_{\Np(X)} \le \sum_{j=1}^\infty \|\phi_j(u-v_j)\|_{\Np(2B_j)} 
\le 2^{1/p}\eps. 
\qedhere
\]
\end{proof}

We now turn to Theorem~\ref{thm-AmbCDGS} which we generalize in the 
following way, also taking into account Theorem~\ref{thm-dense-Nages}.
Note that the set of points, for which \ref{Amb-a} resp.\ \ref{Amb-b}
below holds,
is open.
Thus, if $X$ is connected, these two sets cannot be disjoint.

\begin{thm} \label{thm-locLip-dense-Om-Amb-new}
Let $p>1$ and 
assume that for every $x \in X$ there is 
a ball $B_x \ni x$ such that either 
\begin{enumerate}
\item \label{Amb-a}
$B_x$ is locally compact and 
globally doubling\/\textup{;} or 
\item \label{Amb-b}
the \p-Poincar\'e inequality and the 
doubling property for $\mu$ hold within $B_x$ in the
sense of Definitions~\ref{def-local-doubl-mu} and~\ref{def-PI}.
\end{enumerate}
Then locally Lipschitz functions are dense in $\Nploc(X)$.

In particular, this holds
if $p>1$ and $X$
is locally compact and locally doubling. 
\end{thm}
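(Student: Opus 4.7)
The plan is to combine Theorem~\ref{thm-AmbCDGS} (at points where case~\ref{Amb-a} holds) with Lemma~\ref{lem-Lip-dense-Np-2B} (at points where case~\ref{Amb-b} holds), and then patch the resulting local Lipschitz approximations via exactly the partition of unity argument used in the proof of Theorem~\ref{thm-locLip-dense-Om}.

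The first step is to establish a uniform local patch statement: for every $x\in X$ there is a ball $\Bhat_x\ni x$ with $4\Bhat_x\subset B_x$ (and in case~\ref{Amb-a} with $\itoverline{4\Bhat_x}$ compact) such that every $u\in\Np(4\Bhat_x)$ can be approximated in the $\Np(2\Bhat_x)$-norm by Lipschitz functions on $X$ with support in $4\Bhat_x$. In case~\ref{Amb-b} this is precisely Lemma~\ref{lem-Lip-dense-Np-2B} applied with $B_0=2\Bhat_x$. In case~\ref{Amb-a}, local compactness of $B_x$ lets us shrink to a ball $\Bhat_x$ so that $\itoverline{4\Bhat_x}\subset B_x$ is compact; then $\itoverline{4\Bhat_x}$, equipped with the restricted metric and measure, is a complete globally doubling metric space in its own right (doubling descends from $B_x$ since subspace balls are intersections with ambient balls), so Theorem~\ref{thm-AmbCDGS} furnishes Lipschitz density in $\Np(\itoverline{4\Bhat_x})$. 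Multiplying such an approximation by a Lipschitz cutoff equal to $1$ on $2\Bhat_x$ and $0$ outside $4\Bhat_x$ then produces the required Lipschitz function on $X$ with support in $4\Bhat_x$.

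With the local patch established, the proof proceeds exactly as in Theorem~\ref{thm-locLip-dense-Om}. Given $u\in\Nploc(X)$, use the Lindel\"of property to extract a countable subcover $X=\bigcup_{j=1}^\infty \Bhat_{x_j}$, set $B_j=\Bhat_{x_j}$, and construct the Lipschitz partition of unity $\{\phi_j\}$ subordinate to $\{2B_j\}$ in the same way. For each $j$ the local patch supplies a Lipschitz function $v_j$ on $X$ with $\supp v_j\subset 4B_j$ and
\[
\|u-v_j\|_{\Np(2B_j)} \le \frac{2^{-j}\eps}{1+L_j},
\]
where $L_j$ is the Lipschitz constant of $\phi_j$; since $\supp\phi_j\subset 2B_j$, the product $\phi_j v_j$ is Lipschitz with support in $2B_j$. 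Then $v:=\sum_j\phi_j v_j$ is a locally finite sum of Lipschitz functions, hence locally Lipschitz, and the same telescoping estimate as in the proof of Theorem~\ref{thm-locLip-dense-Om} gives $\|u-v\|_{\Np(X)}\le 2^{1/p}\eps$. The final clause of the theorem follows because if $X$ is locally compact and locally doubling, then Proposition~\ref{prop-doubling-mu-2/3} and Definition~\ref{def-local-doubl-X} guarantee that each point has a neighbourhood that is both locally compact and globally doubling, so case~\ref{Amb-a} is satisfied at every $x$.

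The main obstacle is the transfer in case~\ref{Amb-a} from Theorem~\ref{thm-AmbCDGS} applied to the abstract complete subspace $\itoverline{4\Bhat_x}$ back to a Lipschitz approximation that lives on $X$ and has support in $4\Bhat_x$. The delicate points are verifying that the Newtonian norm computed inside $\itoverline{4\Bhat_x}$ agrees with the ambient one on functions supported well away from $\partial(4\Bhat_x)$ (so that the cutoff multiplication makes the switch between the two senses of $\Np$ harmless), controlling the $\Np$-error introduced by the cutoff via the product rule for upper gradients, and checking that extension by zero yields a Lipschitz function on all of $X$ (automatic since the extended function vanishes on a neighbourhood of $\partial(4\Bhat_x)$). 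None of these is deep, but each has to be verified explicitly to combine the two rather different density theorems in a single partition-of-unity argument.
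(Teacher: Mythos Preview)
Your proposal is correct and follows essentially the same route as the paper's proof, which simply says: replace assumption~\ref{Amb-a} by the stronger ``$\itoverline{B}_x$ is compact and globally doubling'' (your shrinking to $\itoverline{4\Bhat_x}$ compact) and then run the proof of Theorem~\ref{thm-locLip-dense-Om} verbatim, invoking Theorem~\ref{thm-AmbCDGS} in place of Lemma~\ref{lem-Lip-dense-Np-2B} for the balls of type~\ref{Amb-a}. You have supplied the details the paper omits---the cutoff/extension step needed to turn a Lipschitz function on the compact subspace $\itoverline{4\Bhat_x}$ into one on $X$, and the compatibility of the two Newtonian norms---and correctly flagged these as the only nontrivial verifications. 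One small remark: for the final clause, the reference to Proposition~\ref{prop-doubling-mu-2/3} is superfluous, since ``$X$ locally doubling'' in the sense of Definition~\ref{def-local-doubl-X} already gives a globally doubling ball at each point directly.
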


\begin{proof}
Without loss of generality we can replace the assumption \ref{Amb-a}
by 
\begin{enumerate}
\renewcommand{\theenumi}{\textup{(\alph{enumi}$'$)}}%
\item \label{Amb-a'}
$\itoverline{B}_x$ is compact and globally doubling.
\end{enumerate}

The proof is now the same as the proof
of Theorem~\ref{thm-locLip-dense-Om},
but with appeal to Theorem~\ref{thm-AmbCDGS} instead of
Lemma~\ref{lem-Lip-dense-Np-2B}
for the balls $B_x$ satisfying \ref{Amb-a'}.
\end{proof}

So far we have deduced the density for locally Lipschitz functions.
A natural question is when 
density 
can be obtained for Lipschitz functions.
Note 
that under the assumptions
in Theorems~\ref{thm-locLip-dense-Om}
and~\ref{thm-locLip-dense-Om-Amb-new}
it can happen that Lipschitz functions are not dense,
see Remark~\ref{rmk-Lip-dense}.

\begin{thm}   \label{thm-semi-dense}
Assume that $\mu$ is semilocally doubling and 
supports a semilocal \p-Poincar\'e inequality. 
Then Lipschitz functions with bounded 
support are dense in $\Np(X)$. 
If $X$ is also proper, then $\overline{\Lipc(X)} =\Np(X)$. 
\end{thm}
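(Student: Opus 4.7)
The plan is to combine two standard ingredients: a Lipschitz cutoff to truncate to bounded support, and Lemma~\ref{lem-Lip-dense-Np-2B} applied on a sufficiently large ball to approximate by Lipschitz functions on that ball. The delicate point is that Lemma~\ref{lem-Lip-dense-Np-2B} only provides convergence in the $\Np(B_0)$-norm (not in $\Np(X)$), so the cutoff must be placed \emph{strictly inside} the ball where convergence holds.

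Fix $x_0\in X$ and let $u\in \Np(X)$ and $\eps>0$ be given. For each $R>0$, let $\eta_R$ be the Lipschitz cutoff
\[
\eta_R(x)=\max\bigl\{0,\min\{1,2-d(x,x_0)/R\}\bigr\},
\]
which equals $1$ on $B(x_0,R)$, vanishes outside $B(x_0,2R)$ and has $g_{\eta_R}\le (1/R)\chi_{B(x_0,2R)\setm B(x_0,R)}$.
The product rule (\cite[Theorem~2.15]{BBbook}) and dominated convergence give $\eta_R u\to u$ in $\Np(X)$ as $R\to\infty$, since both $|u|^p$ and $g_u^p$ are integrable over $X$. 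So the first step reduces the problem to producing Lipschitz approximations of $\eta_R u$ (or equivalently of $u$ times a fixed cutoff).

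Next, fix $R$ and apply Lemma~\ref{lem-Lip-dense-Np-2B} with the lemma's ``$B_0$'' equal to $B(x_0,2R)$, so that the lemma's ``$2B_0$'' is $B(x_0,4R)$. The semilocal doubling and semilocal \p-Poincar\'e assumptions supply both hypotheses within $B(x_0,4R)$, and $u\in\Np(B(x_0,4R))$ holds since $u\in\Np(X)$. The lemma produces Lipschitz functions $u_k$ on $X$ with $\supp u_k\subset B(x_0,4R)$ and $u_k\to u$ in the $\Np(B(x_0,2R))$-norm. Now set $v_k=\eta_R u_k$; this is a product of Lipschitz functions, vanishing outside $B(x_0,2R)$, and hence Lipschitz on $X$ with $\supp v_k\subset \itoverline{B(x_0,2R)}$ (bounded). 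Moreover,
\[
v_k-\eta_R u = \eta_R(u_k-u),
\]
and since $\eta_R$ vanishes outside $B(x_0,2R)$, the product rule gives
\[
\|v_k-\eta_R u\|_{L^p(X)}\le \|u_k-u\|_{L^p(B(x_0,2R))},
\]
\[
\|g_{\eta_R(u_k-u)}\|_{L^p(X)} \le \|g_{u_k-u}\|_{L^p(B(x_0,2R))} + \frac{1}{R}\|u_k-u\|_{L^p(B(x_0,2R))},
\]
both of which tend to $0$ as $k\to\infty$ by the conclusion of Lemma~\ref{lem-Lip-dense-Np-2B}. Thus $v_k\to\eta_R u$ in $\Np(X)$.

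A diagonal $\eps/2+\eps/2$ argument (choose $R$ large so that $\|\eta_R u-u\|_{\Np(X)}<\eps/2$, then $k$ large so that $\|v_k-\eta_R u\|_{\Np(X)}<\eps/2$) yields a Lipschitz function $v_k$ with bounded support and $\|v_k-u\|_{\Np(X)}<\eps$. This proves the first assertion. For the second statement, if $X$ is proper, then the closed ball $\itoverline{B(x_0,2R)}$ is compact, so $\supp v_k$ is a closed subset of a compact set, i.e.\ compact; hence $v_k\in\Lipc(X)$, giving $\overline{\Lipc(X)}=\Np(X)$. The main obstacle is the mismatch of norms in Lemma~\ref{lem-Lip-dense-Np-2B} (convergence in $\Np(B_0)$ but support only in $2B_0$), which is resolved precisely by forcing the cutoff $\eta_R$ to live inside $B_0$, so that all uncontrolled mass of $u_k$ in the annulus $2B_0\setm B_0$ is killed before it matters.
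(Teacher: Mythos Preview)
Your proof is correct and takes essentially the same approach as the paper's: apply Lemma~\ref{lem-Lip-dense-Np-2B} on a large ball and multiply by a Lipschitz cutoff supported inside that ball. The only cosmetic difference is that the paper combines the two steps via the single decomposition $u-v\eta = u(1-\eta) + (u-v)\eta$, controlling the tail term $u(1-\eta)$ by choosing $B$ with $\|u\|_{\Np(X\setm\itoverline{B})}<\eps$ and the approximation term $(u-v)\eta$ exactly as you do, whereas you first show $\eta_R u\to u$ and then $\eta_R u_k\to\eta_R u$ separately; both arguments hinge on the same observation you flag, namely that the cutoff must be supported where the $\Np$-convergence from the lemma is available.
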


$\Lipc(X)$ denotes the space of Lipschitz
functions with compact support.

\begin{proof}
Let $u \in \Np(X)$ and $\eps >0$.
Find a sufficiently large ball $B$ with $r_B>1$ such that
$\|u\|_{\Np(X \setm \itoverline{B})} < \eps$.
By Lemma~\ref{lem-Lip-dense-Np-2B} and the semilocal assumptions,
there is $v\in\Lip(X)$ so that $\|u-v\|_{\Np(2B)}<\eps$.

For $\eta(x)=(1-\dist(x,B))_+$
we then get that
 $v_\eps:=v\eta\in\Lip(X)$ with $\supp v_\eps \subset 2B$ and
$u-v_\eps = u(1-\eta) + (u-v)\eta$.
Since
\[
g_{u(1-\eta)} \le |u| g_\eta + (1-\eta)g_u
\quad \text{and}\quad
g_{(u-v)\eta} \le \eta g_{u-v} + |u-v|g_\eta,
\]
a simple calculation yields $\|u-v_\eps\|_{\Np(X)} <6\eps$, 
which concludes the proof of the first part. 
If $X$ is also proper then $v_\eps$ has compact support and
thus $\overline{\Lipc(X)} =\Np(X)$. 
\end{proof}

\begin{proof}[Proof of Theorem~\ref{thm-Lipc-dense-intro}.]
The assumptions in 
Theorem~\ref{thm-semi-dense} are satisfied
because of 
Proposition~\ref{prop-semilocal-doubling-intro}
and Theorem~\ref{thm-PI-intro},
and hence the result follows.
\end{proof}

\begin{thm} \label{thm-Lip-dense-compl}
Assume that $X$ 
is a complete semilocally doubling metric space
and that $p>1$.
Then 
$\overline{\Lipc(X)} =\Np(X)$.
\end{thm}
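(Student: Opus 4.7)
The plan is to combine the cutoff argument used in the proof of Theorem~\ref{thm-semi-dense} with Theorem~\ref{thm-AmbCDGS} applied on a sufficiently large closed ball. First I would observe that the hypotheses imply $X$ is proper: since $X$ is semilocally doubling as a metric space, every bounded subset of $X$ is totally bounded, and together with completeness this forces every closed ball in $X$ to be compact. In particular, $X$ is locally compact, and every ball of $X$ is globally doubling as a metric subspace.

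Given $u \in \Np(X)$ and $\varepsilon>0$, fix a base point $x_0\in X$. Since $|u|^p,g_u^p\in L^1(X)$, dominated convergence yields some $R>1$ with $\int_{X\setm B(x_0,R)}(|u|^p+g_u^p)\,d\mu<\varepsilon^p$. Let $B=B(x_0,R)$ and $K=\itoverline{2B}$; then $K$ is compact, hence complete, and is globally doubling as a subspace of $X$. Thus $(K,d|_K,\mu|_K)$ is a complete globally doubling metric measure space, and since $p>1$, Theorem~\ref{thm-AmbCDGS} provides a Lipschitz function $v$ on $K$ with $\|u|_K-v\|_{\Np(K)}<\varepsilon$. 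I would then extend $v$ to a Lipschitz function on $X$ via McShane's extension, keeping the same notation.

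Following the scheme in the proof of Theorem~\ref{thm-semi-dense}, put $\eta(x)=(1-\dist(x,B))_+$, which is 1-Lipschitz with $\eta\equiv1$ on $\itoverline{B}$ and $\supp\eta\subset\itoverline{B(x_0,R+1)}$, compactly contained in the interior of $K$ since $R>1$. Set $w:=v\eta$; by properness of $X$, $w\in\Lipc(X)$. Writing
\[
u-w=u(1-\eta)+(u-v)\eta
\]
and using the Leibniz upper gradient bounds
\[
g_{u(1-\eta)}\le|u|\,g_\eta+(1-\eta)\,g_u,\quad g_{(u-v)\eta}\le\eta\,g_{u-v}+|u-v|\,g_\eta,
\]
together with $g_\eta\le\chi_{X\setm B}$ and $\eta\le 1$, the contribution from the first summand is bounded by $\int_{X\setm B}(|u|^p+g_u^p)\,d\mu<\varepsilon^p$, while the second is controlled by $\|u-v\|_{\Np(K)}^p<\varepsilon^p$ together with the fact that $\eta$ is supported in $K$. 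Consequently $\|u-w\|_{\Np(X)}\le C\varepsilon$ for an absolute constant $C$, which suffices since $\varepsilon$ was arbitrary.

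The main obstacle is ensuring that the upper gradient estimate for $(u-v)\eta$ is really valid on all of $X$, because $v$ is only controlled in the $\Np(K)$-sense and its McShane extension need not be close to $u$ on $X\setm K$. This is resolved by observing that $(u-v)\eta$ is supported in $\itoverline{B(x_0,R+1)}$, a compact set compactly contained in the interior of $K$; in particular the function vanishes on an open neighbourhood of $X\setm K$, so one may take its upper gradient to be zero there. The upper gradient inequality along a rectifiable curve in $X$ then reduces to the corresponding inequality along subcurves lying inside the interior of $K$, where the $\Np(K)$-estimate for $u-v$ is directly applicable.
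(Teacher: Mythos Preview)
Your proposal is correct and follows essentially the same approach as the paper: establish properness from completeness plus semilocal doubling, choose a large ball $B$ with small $\Np$-tail, apply Theorem~\ref{thm-AmbCDGS} on $\overline{2B}$ to get a Lipschitz approximant, then use the cutoff $\eta$ exactly as in Theorem~\ref{thm-semi-dense}. The paper is terser (it simply refers back to the second half of the proof of Theorem~\ref{thm-semi-dense}), while you spell out the McShane extension and the reason the upper-gradient estimate for $(u-v)\eta$ holds globally; these are elaborations of the same argument rather than a different route.
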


Note that, by Lemma~\ref{lem-proper-equiv-complete} 
and Proposition~\ref{prop-semilocal-doubling},
a metric space is complete and semilocally doubling if and only if it is
proper and locally doubling.
A natural question is if Theorem~\ref{thm-Lip-dense-compl} holds when $X$
is only complete and  locally doubling.
The completeness in Theorem~\ref{thm-Lip-dense-compl} cannot 
be replaced by local compactness, see Remark~\ref{rmk-Lip-dense}.

\begin{proof}
This is quite 
similar to the proof of Theorem~\ref{thm-semi-dense}.
Let $u \in \Np(X)$ and $\eps >0$.
Find a sufficiently large ball $B$ with $r_B>1$ such that
$\|u\|_{\Np(X \setm \itoverline{B})} < \eps$.
By the semilocal assumptions, $\overline{2B}$ is a complete
globally doubling metric space.
Hence by Theorem~\ref{thm-AmbCDGS}
there is $v\in\Lip(\overline{2B})$ so that $\|u-v\|_{\Np(2B)}<\eps$. 
The rest of the proof is identical to the second half of the
proof of Theorem~\ref{thm-semi-dense}, since $X$ is proper
by Lemma~\ref{lem-proper-equiv-complete}.
\end{proof}

\begin{remark} \label{rmk-Amb}
We will now explain how Theorem~\ref{thm-AmbCDGS} 
follows from Ambrosio--Colombo--Di Marino~\cite{AmbCD}
and Ambrosio--Gigli--Savar\'e~\cite{AmbGS}.
In~\cite{AmbGS}, a function 
$g$ is called a \p-upper gradient of a function $f$ if
there is $\ft$ such that $\ft=f$ a.e.\
and $g$ is a \p-weak upper gradient of $\ft$ in the sense 
of Definition~\ref{deff-ug}.
In \cite{AmbGS} they also define 
\p-weak upper gradients (different from ours),
\p-relaxed upper gradients and 
\p-relaxed slopes.
Furthermore, they show that if a function $f \in L^p(X)$ has 
a gradient  $g\in L^p(X)$ in any of these four senses, then
it has one in each of the four senses and the a.e.-minimal
ones coincide.
This is shown in \cite[Theorem~7.4 and Section~8.1]{AmbGS}
assuming completeness of $X$.

In Ambrosio--Colombo--Di Marino~\cite{AmbCD}, there is a different definition
of \p-relaxed slope and the same definition of \p-weak upper gradient
as in \cite{AmbGS}. 
In~\cite[Theorem~6.1]{AmbCD} it is shown that if  $X$ is complete
and $f \in L^p(X)$  has a gradient $g \in  L^p(X)$ in either of these two senses, then
it has one in the other and the a.e.-minimal
ones coincide.
So in conclusion the a.e.-minimal gradients in $L^p(X)$ 
in all fives senses exist simultaneously
and then coincide, assuming that $f \in L^p(X)$ and that $X$ is complete.
Moreover, the Sobolev space $W^{1,p}(X)$ in \cite[Corollary~7.5]{AmbCD}
thus coincides with 
\begin{equation} \label{eq-hNp-deff}
        \hNp (X) = \{u : u=v \text{ a.e. for some } v \in \Np(X)\}
\end{equation}
(recall that functions in $\Np(X)$ are defined pointwise everywhere).

The following density result is a special case
of the equivalence result from 
\cite[Theorem~7.4 and Section~8.1]{AmbGS}.
(Note that this is not a norm-density result 
as elsewhere in this section.)

\begin{thm} \label{thm-Lip-weak-dense}
Assume that $X$ is complete and $p>1$.
Let $f \in \Np(X)$.
Then there exist Lipschitz functions $f_n$ such that 
\[
      \lim_{n \to \infty} \biggl(\int_X |f_n-f|^p \,d\mu 
        +\int_X |{\Lip f_n-g_f}|^p \,d\mu \biggr)=0,
\]
where 
\[ 
 \Lip f_n(x) := \limsup_{r\to0} \sup_{y\in B(x,r)} \frac{|f_n(y)-f_n(x)|}{r}
\] 
is the \emph{local upper Lipschitz constant} \textup{(}also called\/ 
\emph{upper pointwise dilation}\textup{)} of $f_n$,
and $g_f$ is the minimal \p-weak upper gradient of $f$
\textup{(}in the sense of Definition~\ref{deff-ug}\textup{)}.
\end{thm}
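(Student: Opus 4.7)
The plan is to derive this statement from the equivalence of several a priori distinct notions of Sobolev gradient established in \cite{AmbGS} and \cite{AmbCD}, rather than reproving it from scratch. The natural auxiliary object is the relaxed Cheeger-type functional
\[
\Ch_p(f) := \inf\Bigl\{ \liminf_{n\to\infty} \int_X |\Lip f_n|^p\, d\mu : f_n \in \Lip(X),\ f_n \to f \text{ in } L^p(X)\Bigr\},
\]
defined for $f \in L^p(X)$. When $\Ch_p(f) < \infty$, a standard relaxation and weak-compactness argument in $L^p(X)$ produces a uniquely determined minimal relaxed slope, which I will denote by $h_f \in L^p(X)$; it satisfies $\int_X h_f^p\,d\mu = \Ch_p(f)$, and there exist Lipschitz $f_n \to f$ in $L^p(X)$ with $\Lip f_n \rightharpoonup h_f$ weakly in $L^p(X)$ and $\|\Lip f_n\|_{L^p(X)} \to \|h_f\|_{L^p(X)}$.

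The main step is the identification $h_f = g_f$ a.e. This is exactly the equivalence that the excerpt attributes to \cite[Theorem~7.4 and Section~8.1]{AmbGS} together with \cite[Theorem~6.1]{AmbCD}: under the hypotheses that $X$ is complete and $p>1$, every $f \in L^p(X)$ admitting a \p-weak upper gradient in $L^p$ also has a minimal relaxed slope, and the two coincide a.e. The inequality $g_f \le h_f$ is the easy direction, since $\Lip f_n$ is an upper gradient of each Lipschitz $f_n$ and the minimal \p-weak upper gradient is lower semicontinuous under $L^p$-convergence. The reverse direction $h_f \le g_f$ is the substantial content of \cite{AmbGS}, and is proved via test plans (or the $p$-modulus machinery of \cite{AmbCD}); this is where completeness of $X$ and the assumption $p>1$ are both essential.

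Once $h_f = g_f$ is available, the approximating sequence is read off directly from the relaxation: there exist Lipschitz $f_n \to f$ in $L^p(X)$ with $\Lip f_n \rightharpoonup g_f$ weakly in $L^p(X)$ and $\|\Lip f_n\|_{L^p(X)} \to \|g_f\|_{L^p(X)}$. Since $L^p(X)$ is uniformly convex for $p > 1$, the Radon--Riesz (Kadets--Klee) property upgrades this to strong convergence $\Lip f_n \to g_f$ in $L^p(X)$. Combined with $f_n \to f$ in $L^p(X)$, this gives exactly the conclusion of the theorem.

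The main obstacle, as already emphasised, is the identification $h_f = g_f$; this is where all the serious analysis sits, and is the reason the theorem fails to be a routine reformulation. The role of $p > 1$ is twofold: it is used in \cite{AmbGS} both to obtain the lower-semicontinuity/convexity properties needed for the equivalence of relaxed slopes and weak upper gradients, and in the last step to apply the Radon--Riesz property in $L^p(X)$. Completeness of $X$, in turn, is what permits the absolute-continuity and test-plan arguments underlying the equivalence --- and indeed, as discussed after Theorem~\ref{thm-AmbCDGS}, completeness genuinely cannot be weakened to local compactness here.
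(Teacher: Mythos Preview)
The paper does not give an independent proof of this statement; it simply records it as ``a special case of the equivalence result from \cite[Theorem~7.4 and Section~8.1]{AmbGS}'' and leaves it at that. Your proposal is correct and is precisely the standard way to unpack that citation: identify the minimal relaxed slope with $g_f$ via the equivalence in \cite{AmbGS}, take a minimizing sequence for the Cheeger energy to obtain weak convergence $\Lip f_n \rightharpoonup g_f$ together with convergence of the $L^p$-norms, and then apply the Radon--Riesz property of $L^p$ (here $p>1$) to upgrade to strong convergence. There is no discrepancy to report.
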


As explained at the bottom of p.\ 1 
of \cite{AmbCD},
once we know reflexivity, the  norm-density of Lipschitz functions
follows directly using Mazur's lemma.
This reflexivity was obtained in Corollary~7.5 in \cite{AmbCD}
when $X$ is complete and globally doubling,
and thus Theorem~\ref{thm-AmbCDGS} follows.

Completeness is not
assumed explicitly in \cite[Corollary~7.5]{AmbCD},
but the result relies 
on other results  
in \cite{AmbCD} (e.g.\ Theorem~7.4) 
for which completeness is assumed.
And indeed, the completeness assumption cannot be dropped or replaced
by assuming that $X$ is merely locally compact for the density result, 
since norm-density of Lipschitz functions then can fail,
see Remark~\ref{rmk-Lip-dense}.
The same counterexamples show that Theorem~\ref{thm-Lip-weak-dense}  cannot
hold (in general) in locally compact spaces 
with Lipschitz functions $f_n$.
Whether it can hold in locally compact spaces, if $f_n$ are just required 
to be locally Lipschitz, is not 
clear because
the ``partition of unity'' technique used in 
the proof of Theorem~\ref{thm-locLip-dense-Om} cannot be used
to construct such an extension,
at least not in such an easy way as here,
since it would require controlling
$\|{\Lip v - g_{f}}\|$ in terms of
$\|{\Lip (\phi_j v_j) - g_{f_j}}\|$.

A slight word of warning: one may get the impression
that as a consequence of Theorem~\ref{thm-Lip-weak-dense} one can
deduce that $g_f= \Lip f$ a.e.\ for Lipschitz functions.
This is not so, and indeed it is not true  in general, as seen by considering
e.g.\ the von Koch snowflake curve,
on which $g_f \equiv 0$ for all functions, 
because of the lack of rectifiable curves.
The equality $g_f= \Lip f$ a.e.\ for Lipschitz functions
is however true if $X$ is complete, $p>1$ and $\mu$ is 
globally doubling and supports a global \p-Poincar\'e inequality,
by Theorem~6.1 in Cheeger~\cite{Cheeg}.
\end{remark}

\begin{remark}
  Occasionally it may be interesting to know when (locally) Lipschitz 
 or continuous functions are dense in $\Np(X)$ even when $X\ne\supp\mu$,
in which case our general condition that all balls have positive measure is invalid.
  It turns out that this happens if and only if they are dense
  in $\Np(\supp \mu)$. 
  For Lipschitz and continuous functions this is Lemma~5.19\,(e) in
  \cite{BBbook}.
  For locally Lipschitz functions this can be proved similarly,
  provided that one uses the locally Lipschitz extensions 
  due to Luukkainen--V\"ais\"al\"a~\cite[Theorem~5.7]{LuukkV77}.
(Note that the class LIP in \cite{LuukkV77} consists
of locally Lipschitz functions.)

  For quasicontinuity, which will 
be discussed in the next section,
  a similar equivalence is also true, by \cite[Lemma~5.19\,(d)]{BBbook}.
\end{remark}

\section{Quasicontinuity and other consequences}
\label{sect-qcont}

Having established the density of continuous 
(or more exactly locally
Lipschitz) functions we can now draw a number of 
qualitative conclusions about Newtonian functions and capacities.

Throughout this section, $\Om \subset X$ is open.
A function $u:\Om \to \eR$ is \emph{quasicontinuous}
if for every $\eps>0$ there is an open set $G$
with $\Cp(G)<\eps$ such that $u|_{\Om \setm G}$ is continuous.
See the recent paper 
Bj\"orn--Bj\"orn--Mal\'y~\cite{BBMaly}
for several different characterizations of quasicontinuity,
and in particular that one can equivalently replace the condition
$\Cp(G) <\eps$ by $\CpOm(G) < \eps$,
where $\CpOm$ is the capacity associated with $\Np(\Om)$ rather
than with $\Np(X)$.
Note also that, in the following theorem, $X$ can be replaced by 
any open subset of $X$.
Moreover, the conditions in \ref{q-a} and \ref{q-b} below are inherited
by open subsets. 

\begin{thm} \label{thm-when-qcont-new}
Assume that $X$ is locally compact and that
one of the following conditions is satisfied\/\textup{:}
\begin{enumerate}
\item \label{q-a}
$\mu$ is locally doubling and 
supports a local \p-Poincar\'e inequality, 
\item \label{q-b}
$p>1$ and $X$ is locally doubling,
or more generally the conditions in 
Theorem~\ref{thm-locLip-dense-Om-Amb-new} are satisfied\/\textup{;}
\item \label{q-c}
continuous functions are dense in $\Np(X)$. 
\end{enumerate}

Then every $u \in \Nploc(X)$ is quasicontinuous in $X$
and hence $\Cp$ is an outer capacity, i.e.\ 
\[
\Cp(E)=\inf_{\substack{ G \supset E \\  G \text{ open}}} \Cp(G)
\quad \text{for every } E \subset X.
\]
\end{thm}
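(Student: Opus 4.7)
My plan is to unify the three hypotheses into density of continuous functions locally in $\Np$, deduce quasicontinuity via a standard quasi-uniform convergence argument that exploits the pointwise nature of Newtonian representatives, and then extract the outer capacity property by a direct truncation. First I would reduce all three conditions to the following statement: for every precompact open $V\subset X$ and every $u\in\Nploc(X)$ there is a sequence of continuous $v_n\to u$ in the $\Np(V)$-norm. In cases~\ref{q-a} and~\ref{q-b} this is supplied by Theorems~\ref{thm-locLip-dense-Om} and~\ref{thm-locLip-dense-Om-Amb-new}, which give locally Lipschitz approximants in $\Nploc(X)$. In case~\ref{q-c} one multiplies $u$ by a Lipschitz cutoff supported in an open $W\supset\overline V$ with $u\in\Np(W)$ (such $W$ exists since $X$ is locally compact and Lindel\"of), extends by zero to obtain a function in $\Np(X)$, and applies the assumed density there. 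Since $X$ is Lindel\"of, it is covered by countably many such precompact opens, so quasicontinuity on each of them combines via countable subadditivity of $\Cp$ into quasicontinuity on $X$.

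For the main step, fix a precompact open $V$ and $u\in\Np(V)$, pass to a subsequence with $\|v_{n+1}-v_n\|_{\Np(V)}<4^{-n}$, and apply the elementary weak-type capacity estimate $C_p^V(\{|h|>t\})\le t^{-p}\|h\|_{\Np(V)}^p$ with $h=v_{n+1}-v_n$ and $t=2^{-n}$ to obtain open sets $G_n$ with $C_p^V(G_n)\le 2^{-np}$. On the complement of $G^*_N:=\bigcup_{n\ge N}G_n$ the sequence $\{v_n\}$ is uniformly Cauchy, so it converges uniformly to a continuous $\tilde u$, with $C_p^V(G^*_N)\to 0$ as $N\to\infty$. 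Because Newtonian functions are defined pointwise and $\Np(V)$-convergence forces q.e.\ convergence of some further subsequence (by \cite[Proposition~1.59]{BBbook} applied to $v_n-u$), one gets $\tilde u=u$ q.e.\ on $V\setm G^*_N$; enlarging $G^*_N$ by an open set of arbitrarily small capacity covering the exceptional null set, and translating between $C_p^V$ and $\Cp$ via the characterization in \cite{BBMaly}, yields quasicontinuity on $V$.

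For the outer capacity conclusion, fix $E\subset X$ and $\eps>0$, and pick $u\in\Np(X)$ with $u=1$ on $E$ and $\|u\|_{\Np(X)}^p<\Cp(E)+\eps$, together with $w\in\Np(X)$ with $w\ge\chi_G$ and $\|w\|_{\Np(X)}^p<2\eps$, where $G$ is open with $\Cp(G)<\eps$ and $u|_{X\setm G}$ is continuous. Since $u|_{X\setm G}$ is continuous, there is an open $A'\subset X$ with $A'\cap(X\setm G)=\{u>1-\eps\}\cap(X\setm G)$, so $U:=A'\cup G$ is an open neighbourhood of $E$ on which $u/(1-\eps)+w\ge 1$; this function is admissible for $\Cp(U)$, giving $\Cp(U)^{1/p}\le(\Cp(E)+\eps)^{1/p}/(1-\eps)+(2\eps)^{1/p}\to\Cp(E)^{1/p}$ as $\eps\to 0$. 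I expect the principal obstacle to lie in the identification $\tilde u=u$ q.e.\ on $V\setm G^*_N$ without circularity: quasicontinuity of $u$ is precisely what is being proved, so the argument must rely solely on the intrinsic pointwise definition of Newtonian representatives and on the q.e.\ convergence of subsequences extracted from $\Np$-convergence. Once that identification is secured, the remaining steps are routine manipulations of capacities and cutoffs.
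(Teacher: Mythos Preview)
Your overall architecture mirrors the paper's: reduce \ref{q-a} and \ref{q-b} to \ref{q-c} via Theorems~\ref{thm-locLip-dense-Om} and~\ref{thm-locLip-dense-Om-Amb-new}, then obtain quasicontinuity from density of continuous functions, then deduce the outer capacity property. The paper does this by citing \cite[Theorem~5.21]{BBbook}, \cite[Proposition~4.7]{BBLeh1} and \cite[Corollary~1.3]{BBS5} as black boxes; you are essentially unpacking the first and third of these, which is fine.

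The genuine gap is in the step you treat as routine, not the one you flag. The identification $\tilde u=u$ q.e.\ on $V\setm G^*_N$ is \emph{not} the obstacle: it follows immediately from the fact that $\Np$-convergence forces q.e.\ convergence along a subsequence, exactly as you say. The problem is the next clause, ``enlarging $G^*_N$ by an open set of arbitrarily small capacity covering the exceptional null set.'' That step \emph{is} outer regularity of $\Cp$ applied to a null set, which is part of the conclusion you are trying to prove; invoking it here is circular. Your quasi-uniform construction with the open sets $G_n=\{|v_{n+1}-v_n|>2^{-n}\}$ only yields a quasicontinuous \emph{representative} $\tilde u$ of $u$ (this is the content of \cite[Theorem~5.21]{BBbook}); the passage from ``$u$ has a quasicontinuous representative'' to ``$u$ itself is quasicontinuous'' is a separate, nontrivial fact. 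It is precisely here that local compactness is used in an essential way, and this is what \cite[Proposition~4.7]{BBLeh1} supplies. Without that ingredient your argument does not close: the residual null set $\{u\neq\tilde u\}$ cannot be absorbed into an open set of small capacity by any of the tools you have assembled so far, and nothing in \cite{BBMaly} provides this either.

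So the plan is correct in outline but you have mislocated the crux. You should either invoke \cite[Proposition~4.7]{BBLeh1} directly (as the paper does), or reproduce its argument, which exploits local compactness to show that capacity-null sets can be covered by open sets of arbitrarily small capacity; only then does the rest of your outer-capacity argument go through.
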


Quasicontinuity has been established for Newtonian functions
under various assumptions in 
Bj\"orn--Bj\"orn--Shanmugalingam~\cite{BBS5},
Bj\"orn--Bj\"orn--Lehrb\"ack~\cite{BBLeh1} 
and Heinonen--Koskela--Shanmugalingam--Tyson~\cite{HKSTbook}
for open subsets.
See also Shanmugalingam~\cite{Sh-rev}.
Assuming that $X$ is complete and that $\mu$ is globally doubling and supports
a global \p-Poincar\'e inequality, 
quasicontinuity was also established for functions in $\Np(U)$
when $U$ is a quasiopen subset of $X$, by 
Bj\"orn--Bj\"orn--Latvala~\cite{BBLat3} (when $p>1$)
and Bj\"orn--Bj\"orn--Mal\'y~\cite{BBMaly}.

For $p>1$, quasicontinuity also implies that
$\Cp$ is a Choquet capacity and thus, if $X$ is locally compact, that
all Borel sets are capacitable, i.e.\ 
\[
\Cp(E)=\sup_{\substack{ K \subset E \\  K \text{ compact}}} \Cp(K)
\quad \text{for every Borel set } E \subset X,
\]
see e.g.\ Aikawa--Ess\'en~\cite[Part~2, Section~10]{AE}
together with
\cite[Theorems~6.4 and 6.7\,(viii)]{BBbook}.
It should be mentioned that there is no known example 
of a Newtonian function which is not quasicontinuous,
nor of a metric space  $X$ such that continuous 
functions are not dense in $\Np(X)$.

\begin{proof}[Proof of Theorem~\ref{thm-when-qcont-new}]
By Theorems~\ref{thm-locLip-dense-Om}
and~\ref{thm-locLip-dense-Om-Amb-new}, \ref{q-a} $\imp$ \ref{q-c} and
\ref{q-b} $\imp$ \ref{q-c}. So assume that \ref{q-c} holds. 
By  \cite[Theorem~5.21]{BBbook}  
every $u \in \Nploc(X)$ has a quasicontinuous representative.
As $X$ is locally compact, Proposition~4.7 in 
Bj\"orn--Bj\"orn--Lehrb\"ack~\cite{BBLeh1} then shows that every 
$u \in \Nploc(X)$ is quasicontinuous.
The outer capacity property then follows from 
Bj\"orn--Bj\"orn--Shanmugalingam~\cite[Corollary~1.3]{BBS5} 
(or \cite[Theorem~5.31]{BBbook}).
\end{proof}

Quasicontinuity of $\Np(X)$ gets inherited to open subsets in the
  following way.
Recall the definition of $\hNp(\Om)$ in \eqref{eq-hNp-deff}.

\begin{prop}   \label{prop-qcont-consequences}
If every $u \in \Np(X)$ is quasicontinuous, then
$\Np\loc(\Om)$ 
consists exactly of those 
$u\in\hNp\loc (\Om)$ 
which are quasicontinuous, and similarly for $\Np(\Om)$.
\end{prop}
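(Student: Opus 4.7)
The plan is to establish the two set-theoretic inclusions. The inclusion $\Np\loc(\Om) \subset \hNp\loc(\Om)$ is immediate, so the content of the forward direction is to show that every $u \in \Np\loc(\Om)$ is quasicontinuous in $\Om$. Given such a $u$ and any $x \in \Om$, I would choose a ball $B_x \ni x$ with $B_x \subset \Om$ and $u \in \Np(B_x)$, a smaller concentric ball $B_x'$, and a Lipschitz function $\eta_x$ with $0 \le \eta_x \le 1$, $\eta_x = 1$ on $B_x'$ and $\supp \eta_x \subset B_x$. A standard cutoff and extension-by-zero argument places $\eta_x u$ into $\Np(X)$, so the hypothesis yields its quasicontinuity in $X$. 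Since $\Om$ is Lindel\"of, cover it by countably many such balls $B'_{x_j}$. For each $\eps > 0$, apply quasicontinuity of $\eta_{x_j} u$ at level $\eps 2^{-j}$ to produce open sets $G_j \subset X$ with $\Cp(G_j) < \eps 2^{-j}$ on whose complement $u = \eta_{x_j} u$ is continuous. Setting $G = \bigcup_j G_j$, countable subadditivity gives $\Cp(G) < \eps$, and the openness of each $B'_{x_j}$ in $\Om$ allows the local continuities to be patched into continuity of $u|_{\Om \setm G}$ at every point.

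For the reverse inclusion, I would take $u \in \hNp\loc(\Om)$ quasicontinuous in $\Om$ and, at each $x \in \Om$, fix a ball $B \ni x$ with $B \subset \Om$ for which $u = v$ a.e.\ on $B$ for some $v \in \Np(B)$. Applying the forward direction with $\Om$ replaced by the open ball $B$, the function $v$ is quasicontinuous in $B$; thus $h := u - v$ is quasicontinuous in $B$ and vanishes $\mu$-almost everywhere. To upgrade this to $h = 0$ quasieverywhere, fix $\eps > 0$ and let $G$ be the open set furnished by the quasicontinuity of $h$, with $\Cp(G) < \eps$ and $h|_{B \setm G}$ continuous. If $\{h \ne 0\} \cap (B \setm G)$ were nonempty, the openness of $B$ and $G$ together with continuity of $h$ on $B \setm G$ would yield an open ball in $X$ contained in $B \setm G$ on which $h \ne 0$; as $\mu$ assigns positive mass to every ball, this contradicts $h = 0$ a.e. Hence $\{h \ne 0\} \subset G$ and $\Cp(\{h \ne 0\}) < \eps$; letting $\eps \to 0$ gives $u = v$ q.e.\ on $B$. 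Since a Newtonian function remains Newtonian after modification on a set of zero capacity (as recorded in Section~\ref{sect-prelim}, where $u \sim v$ iff $u = v$ q.e.), this yields $u \in \Np(B)$, and therefore $u \in \Np\loc(\Om)$.

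The main obstacle I anticipate is the delicate a.e.-to-q.e.\ passage for the quasicontinuous $h$: $B \setm G$ is only closed in $B$, so one must convert a subspace-open neighborhood into a genuine open ball in $X$, a step that relies crucially on $B$ being open in $X$ and on $\mu$ giving positive mass to every ball. The variant for $\Np(\Om)$ (in place of $\Np\loc(\Om)$) is proved by the same two arguments, applied globally on $\Om$ rather than locally on each ball $B$.
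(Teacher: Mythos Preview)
Your forward direction is correct and essentially matches the paper: cut off by a Lipschitz $\eta_x$ to land in $\Np(X)$, invoke the hypothesis, cover $\Om$ by countably many balls via Lindel\"of, and patch using countable subadditivity of $\Cp$.

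The reverse direction contains a genuine gap. You correctly observe that $B \setm G$ is only \emph{closed} in $B$, yet you then assert that continuity of $h$ on $B \setm G$ at a point $x_0$ with $h(x_0)\ne 0$ ``would yield an open ball in $X$ contained in $B \setm G$ on which $h \ne 0$''. This is false: relative continuity on the closed set $B \setm G$ only gives an open ball $B' = B(x_0,r) \subset B$ such that $h \ne 0$ on $B' \cap (B \setm G) = B' \setm G$. There is no reason for $B' \subset B \setm G$; the open set $G$ may meet every ball around $x_0$. What you actually obtain is $\mu(B' \setm G)=0$, and from this one cannot conclude $\{h\ne0\}\subset G$, since $B' \setm G$ is not open and a set of measure zero can have positive capacity.

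The paper handles this step by citing Kilpel\"ainen's argument (see \cite{Kilpqe} and \cite[Propositions~5.22 and~5.23]{BBbook}), and the missing idea is this. Choose $V$ open in $X$ with $V \cap (B \setm G) = \{h\ne0\} \cap (B \setm G)$; then $\mu(V \setm G)=0$. Take $w \in \Np(X)$ with $w=1$ on $G$ and $\|w\|_{\Np}^p < 2\eps$. Then $w=1$ a.e.\ on the \emph{open} set $V$, and since $w,1 \in \Nploc(V)$, the a.e.-implies-q.e.\ property for Newtonian functions (recorded in Section~\ref{sect-prelim}) yields $w=1$ q.e.\ on $V$. Modifying $w$ on the capacity-zero set $V \cap \{w\ne1\}$ gives an admissible function for $G \cup V \supset \{h\ne0\}$ with the same norm, so $\Cp(\{h\ne0\}) < 2\eps$, and letting $\eps\to0$ finishes. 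The key step you missed is passing through the open set $V$ and using the a.e.-to-q.e.\ property there, rather than trying to locate an open ball inside the closed set $B \setm G$.
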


\begin{proof}
Clearly, $\Np\loc(\Om)\subset\hNp\loc(\Om)$.
Multiplying $u\in\Np\loc(\Om)$ by Lipschitz cut-off 
functions shows that for each $x \in \Om$ there is $r_x>0$ such that 
$u$ is quasicontinuous in $B(x,r_x)$.
As $X$ is Lindel\"of, 
a countable covering of $\Om$ by such balls yields quasicontinuity in
$\Om$.

Conversely, by an argument due to Kilpel\"ainen~\cite{Kilpqe},
every quasicontinuous $u\in\hNp\loc(\Om)$ is q.e.\ equal to 
a Newtonian function and hence itself in $\Np\loc(\Om)$,
cf.\ \cite[Propositions~5.22 and~5.23]{BBbook}.
\end{proof}

Quasicontinuity, or rather the outer capacity property following
from it, provides us with a short proof of the following fact,
cf.\ Kallunki--Shanmugalingam~\cite{KaSh} where it
was proved under stronger assumptions.
A similar statement is not true if 
we drop the assumption that $K$ be compact.
(Let e.g.\ $K$ be a countable dense subset of 
a ball in $\R^n$ and $p\le n$.)

\begin{prop}
Assume that $\Cp$ is an outer capacity
and that
continuous resp.\ {\rm(}locally\/{\rm)} Lipschitz functions 
are dense in $\Np(X)$. 
If $K\subset X$ is compact, then
\[
    \Cp(K) = \inf
\|u\|_{\Np(X)}^p,
\]
where the infimum is taken over all 
continuous resp.\ {\rm(}locally\/{\rm)} Lipschitz
$u$ such that $u\ge1$ on $K$.
\end{prop}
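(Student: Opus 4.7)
\emph{Proof plan.} The inequality $\Cp(K)\le\inf$ is immediate: for any admissible $u$, the truncation $u':=\min\{\max\{u,0\},1\}$ equals $1$ on $K$, lies in $\Np(X)$, and satisfies both $|u'|\le|u|$ and $g_{u'}=g_u\chi_{\{0<u<1\}}$ a.e., so $\|u'\|_{\Np(X)}\le\|u\|_{\Np(X)}$; hence $u'$ is admissible for $\Cp(K)$ and the inequality follows.

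For the reverse inequality fix $\eps>0$. Since $\Cp$ is an outer capacity, pick an open $G\supset K$ with $\Cp(G)<\Cp(K)+\eps$ and choose $v\in\Np(X)$ with $v=1$ on $G$, $\|v\|_{\Np(X)}^p<\Cp(G)+\eps$, and $0\le v\le1$ (after truncation). As $K$ is compact and $X\setminus G$ is closed and disjoint from $K$, let $d:=\min\{1,\dist(K,X\setminus G)\}>0$, and define the bounded Lipschitz function $\eta(x)=\max\{0,\,1-2\dist(x,K)/d\}$, which satisfies $0\le\eta\le1$, $\eta=1$ on $K$ and $\supp\eta\subset G$; in particular $\eta\in\Np(X)$ and $\eta\le v$ pointwise, so $v=\max\{v,\eta\}$. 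By the density hypothesis, pick $\phi$ of the regularity (continuous, Lipschitz or locally Lipschitz) appearing in the conclusion with $\|\phi-v\|_{\Np(X)}<\delta$, and set $w:=\max\{\phi,\eta\}$. Then $w\ge\eta=1$ on $K$ and $w$ inherits the regularity of $\phi$ since $\eta$ is Lipschitz.

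The identity $\max\{a,c\}=c+(a-c)_+$ gives $|w-v|\le|\phi-v|$, hence $\|w-v\|_{L^p(X)}\le\delta$. For the minimal \p-weak upper gradients, the basic equality $g_f=g_h$ a.e.\ on $\{f=h\}$, applied on $\{\phi\ge\eta\}$ (where $w-v=\phi-v$) and on $\{\phi<\eta\}$ (where $w-v=\eta-v$), yields
\[
\int_X g_{w-v}^p\,d\mu \le \int_{\{\phi\ge\eta\}} g_{\phi-v}^p\,d\mu + \int_{\{\phi<\eta\}} g_{\eta-v}^p\,d\mu.
\]
The first integral is at most $\delta^p$; since $v\ge\eta$ everywhere, the equality $g_{\eta-v}=0$ a.e.\ on $\{v=\eta\}$ reduces the second to the set $\{\phi<\eta,\,v>\eta\}$. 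Applying the argument to a sequence $\phi_n\to v$ in $\Np(X)$ with pointwise a.e.\ convergence, one has $\chi_{\{\phi_n<\eta,\,v>\eta\}}\to 0$ a.e., so dominated convergence (using $g_{\eta-v}\in L^p$) drives the remaining integral to $0$. Therefore $w_n\to v$ in $\Np(X)$ along the subsequence, whence $\|w_n\|_{\Np(X)}^p<\Cp(K)+3\eps$ for large $n$, and letting $\eps\to0$ completes the proof.

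\emph{Main obstacle.} The delicate step is showing $g_w\to g_v$ in $L^p$: the crude subadditivity $g_{\max\{\phi,\eta\}}\le g_\phi+g_\eta$ leaves a nonvanishing residual $g_\eta$. The key is to exploit $v\ge\eta$ (so $v=\max\{v,\eta\}$ shares the same lattice structure as $w$) together with the localization $g_f=g_h$ a.e.\ on $\{f=h\}$, which confines the problematic $g_\eta$-contribution to the vanishing set $\{\phi<\eta,\,v>\eta\}$.
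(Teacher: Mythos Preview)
Your proof is correct but follows a genuinely different route from the paper's. The paper builds its test function by \emph{interpolation}: with $\eta(x)=\min\{1,\dist(x,K)/\de\}$ (so $\eta=0$ on $K$), it sets $\tilde v = u + \eta(v-u)$, where $u$ is the near-optimal capacity competitor and $v$ approximates $u$ in $\Np$. Then the product rule $g_{\eta(v-u)}\le \eta g_{v-u}+|v-u|g_\eta$ gives the norm estimate directly, with no sequence or dominated-convergence step needed. Your argument instead uses the \emph{lattice} operation $w=\max\{\phi,\eta\}$ (with your $\eta=1$ on $K$), and controls the gradient by the localization property $g_f=g_h$ a.e.\ on $\{f=h\}$, which isolates the residual to the shrinking set $\{\phi<\eta,\ v>\eta\}$; this necessitates passing to an a.e.-convergent subsequence and invoking dominated convergence. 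The paper's route is shorter and yields an explicit $\eps$-estimate in one shot; your route trades that directness for a very clean regularity check (the maximum of two (locally) Lipschitz/continuous functions is again such), whereas the paper has to argue separately on $G$ and on $X\setminus\supp(1-\eta)$ to recover global Lipschitz regularity of $\tilde v$.
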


Note that if continuous
functions are dense in $\Np(X)$, then  the condition that $\Cp$ is an outer
capacity is equivalent to requiring that all functions in $\Np(X)$
are quasicontinuous, see 
Theorems~5.20, 5.31 and  Proposition~5.32 in \cite{BBbook}.

\begin{proof}
Given $\eps>0$,
there exist an open set $G\supset K$ and $u\in\Np(X)$ such that
$u=1$ on $G$ and
\begin{equation} \label{eq-CpK}
\|u\|_{\Np(X)}^p < \Cp(K)+\eps.
\end{equation}
Since $K$ is compact, there exists $0<\de\le1$ such that 
$\dist(K,X\setm G)>2\de$.
Let $\eta(x):=\min\{1,\dist(x,K)/\de\}$ 
and set $\tilde{v}=u+\eta(v-u)$,
where $v$ is continuous resp.\ (locally) Lipschitz in $X$ and
such that $\|v-u\|_{\Np(X)}<\eps\de$.
Then $\tilde{v}=1$ on $K$ and, as $g_{\eta(v-u)}\le\eta g_{v-u}+|v-u|g_\eta$,
we also have
\[
\|g_{\eta(v-u)}\|_{L^p(X)} \le \|g_{v-u}\|_{L^p(X)} + \frac{1}{\de} \|v-u\|_{L^p(X)}
    \le \frac{2}{\de} \|v-u\|_{\Np(X)} < 2\eps.
\]
It then follows that
\begin{align*}
\Cp(K) &\le \|\tilde{v}\|_{\Np(X)}^p   \\
    &\le  (\|u\|_{\Lp(X)}+ \|v-u\|_{L^p(X)}) ^p + (\|g_u\|_{\Lp(X)} 
   + \|g_{\eta(v-u)}\|_{L^p(X)}) ^p \\
    &\le  (\|u\|_{\Lp(X)}+ \eps \de) ^p + (\|g_u\|_{\Lp(X)} 
   + 2\eps) ^p,
\end{align*}
which, by \eqref{eq-CpK}, tends to $\Cp(K)$ as $\eps \to 0$.

Finally, $\tilde{v}=1-\eta+\eta v$ is continuous resp.\ (locally) Lipschitz
in $G$ (as $u\equiv1$ therein),
while $\tilde{v}=v$ is continuous resp.\ (locally) Lipschitz
in the open set 
\[
X\setm\supp(1-\eta)\supset X\setm G.
\]
It follows that $\tilde{v}$ is continuous resp.\ locally
  Lipschitz in $X$ and, as $\dist(X\setm G,\supp(1-\eta))>\de$,
  also Lipschitz in $X$ whenever $v$ is Lipschitz.
\end{proof}

\section{Conclusions for \texorpdfstring{\p}{p}-harmonic functions}

Nonlinear potential theory associated with \p-harmonic functions 
and quasiminimizers, $p>1$,
has been extensively studied during the last 20 years
on complete metric spaces equipped with globally doubling measures supporting
a global \p-Poincar\'e inequality, see e.g.\ \cite{BBbook} and the references therein.
It is therefore natural 
to see to which extent this theory can be extended
to local assumptions.

In much of this theory the properness of $X$ plays an important role and even though
some of the theory has already been developed on noncomplete spaces
(see in particular 
Kinnunen--Shanmugalingam~\cite{KiSh01}, Bj\"orn~\cite{Bj02}
and Bj\"orn--Marola~\cite{BMarola}), we will in this section
restrict ourselves
to \emph{proper $X$}.
(See Bj\"orn--Bj\"orn~\cite[Section~6]{BBnoncomp} for a similar discussion
without the properness assumption.)
\emph{We will also assume that $X$ is connected,  that $\mu$ is locally doubling
and supports a local \p-Poincar\'e inequality, and that $p>1$.}

As we have seen, it then follows from
Proposition~\ref{prop-semilocal-doubling-intro} 
and Theorem~\ref{thm-PI-intro}
that $\mu$ is semilocally doubling
and supports a semilocal \p-Poincar\'e inequality.
The results in this paper show that most of the essential tools
needed to develop the potential theory on metric spaces are available also under these
assumptions.

\begin{deff} \label{def-quasimin}
A function $u \in \Nploc(\Om)$ is a 
\emph{$Q$-quasi\/\textup{(}super\/\textup{)}minimizer}, $Q \ge 1$, in $\Om$
if 
\begin{equation} \label{eq-deff-qmin}
      \int_{\phi \ne 0} g^p_u \, d\mu 
           \le Q \int_{\phi \ne 0} g_{u+\phi}^p \, d\mu
\end{equation}
for all (nonnegative) $\phi \in \Lipc(\Om)$.

If $Q=1$ in \eqref{eq-deff-qmin} then $u$ is 
a \emph{\textup{(}super\/\textup{)}minimizer}.
A \emph{\p-harmonic function} is a continuous minimizer.
\end{deff}

See Bj\"orn~\cite[Proposition~3.2]{ABkellogg} for equivalent ways
of defining quasisuperminimizers; those equivalences also
extend to spaces with our local assumptions.
(Here Theorem~\ref{thm-semi-dense} is needed.)
Our first observation is that interior regularity is preserved
under local assumptions.
A function $u$ on $\Om$ is \emph{lsc-regularized} if
$u(x)=\essliminf_{y \to x} u(y)$ for all $x \in \Om$.

\begin{thm}   \label{thm-semiloc-int-reg}
Let $u$ be a quasi\/\textup{(}super\/\textup{)}minimizer in $\Om$.
Then $u$ has a representative $\ut$ which is continuous 
\textup{(}resp.\ lsc-regularized\/\textup{)}.

Moreover, the
weak Harnack inequalities for quasi\/\textup{(}super\/\textup{)}minimizers
hold within every ball $B_0\subset X$ 
i.e.\ for every ball $B \subset B_0$ with $\La B\subset\Om$,
where $\La$ and the weak Harnack constants depend only on $B_0$.
\end{thm}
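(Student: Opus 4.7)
The plan is to reduce the theorem to the standard interior regularity theory for \p-quasi(super)minimizers, whose proofs use the doubling property and the \p-Poincar\'e inequality only at scales dictated by the support of the test functions, and then to invoke the local-to-semilocal upgrade already established.

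First I would fix an arbitrary ball $B_0 = B(x_0, r_0) \subset X$. By Proposition~\ref{prop-semilocal-doubling-intro} and Theorem~\ref{thm-PI-intro}, $\mu$ is semilocally doubling and supports a semilocal \p-Poincar\'e inequality on $X$. Hence for any $\La>1$ both the doubling property and a \p-Poincar\'e inequality with some uniform dilation $\la$ hold within the enlarged ball $\widetilde B_0 := \La B_0$, with constants depending only on $B_0$ (and $\La$). I would fix $\La$ once and for all large enough to absorb the PI dilation $\la$ and all further fixed dilation factors (typically of the order of $50\la$) that appear in the standard iteration proofs. Note also that Theorem~\ref{thm-semi-dense} gives density of $\Lipc(\widetilde B_0)$ in $\Np(\widetilde B_0)$, which is useful for passing between the various equivalent formulations of quasisuperminimizers.

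Next, for any ball $B \subset B_0$ with $\La B \subset \Om$, the standard Caccioppoli inequality combined with De~Giorgi / Moser iteration and the usual chaining arguments (as in Kinnunen--Shanmugalingam~\cite{KiSh01} and Chapter~8 of~\cite{BBbook}) yields the weak Harnack inequality for $u$ on $B$. All those arguments involve only test functions supported in $B$, telescoping chains of balls concentric with points of $B$, and applications of doubling and the \p-Poincar\'e inequality to balls contained in $\La B \subset \widetilde B_0$. Consequently the resulting weak Harnack constants depend only on the semilocal doubling and PI constants furnished for $\widetilde B_0$, i.e.\ only on $B_0$, which proves the second assertion.

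Finally, from the weak Harnack inequality the standard oscillation-decay argument supplies a continuous representative $\ut$ of each quasiminimizer on $B$, while for a quasisuperminimizer the pointwise \emph{essliminf} regularization gives an lsc representative; both are local constructions and agree on overlaps, so covering $\Om$ by countably many such balls $B_0$ (possible since $X$ is Lindel\"of) patches the local representatives into a global $\ut$ on $\Om$. The main obstacle, which is more bookkeeping than ideas, is verifying that every invocation of the $5$-covering lemma, of doubling, and of the \p-Poincar\'e inequality in the standard proofs takes place on balls that remain inside $\widetilde B_0$; since those proofs enlarge each test ball by only a fixed finite factor depending on $\la$, this is achieved by choosing $\La$ sufficiently large at the outset.
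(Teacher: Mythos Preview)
Your overall strategy matches the paper's: reduce to the standard local proofs of interior regularity and weak Harnack inequalities, using the semilocal upgrade of doubling and Poincar\'e to control the constants on each $B_0$. However, you gloss over two specific ingredients that the paper singles out as essential.

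First, the iteration arguments in \cite{KiSh01}, \cite{KiMa02} and \cite{KiMa03} (and in parts of Chapter~8 of \cite{BBbook}) are written under the assumption of a $q$-Poincar\'e inequality for some $q<p$, not merely a \p-Poincar\'e inequality. You only invoke the semilocal \p-Poincar\'e inequality, which is not enough to apply those proofs verbatim. The paper closes this gap by appealing to the local Keith--Zhong self-improvement (Theorem~\ref{thm-KZ-proper} / Theorem~\ref{thm-loc-cpt-q-PI}), which supplies, within each ball $B_0$, a genuine $q$-Poincar\'e inequality with $q<p$ depending on $B_0$. You should add this step.

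Second, for the lsc-regularization of a quasisuperminimizer you write that ``the pointwise essliminf regularization gives an lsc representative''. That the essliminf is lower semicontinuous is trivial; what needs justification is that it agrees with $u$ q.e., i.e.\ that it is indeed a \emph{representative}. This requires knowing that $u$ has Lebesgue points q.e., which the paper obtains from Theorem~\ref{thm-Leb-pt}. Your argument as written does not supply this.

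With these two additions (invoking Theorems~\ref{thm-loc-cpt-q-PI} and~\ref{thm-Leb-pt}), your proof is the same as the paper's.
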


See e.g.\  Kinnunen--Martio~\cite{KiMa02}, \cite{KiMa03},
Kinnunen--Shanmugalingam~\cite{KiSh01}, Bj\"orn--Marola~\cite{BMarola} 
and Bj\"orn--Bj\"orn~\cite{BBbook} for formulations of the
weak Harnack inequalities. 
There are various types of weak Harnack inequalities in these papers
and under different assumptions. 
In \cite{KiMa02}, \cite{KiMa03} and \cite{KiSh01}
a $q$-Poincar\'e inequality for some $q<p$ is assumed,
which under our assumptions is provided by 
Theorem~\ref{thm-loc-cpt-q-PI}.
Here $q$ will depend on the ball $B_0$.

Note that some
weak Harnack inequalities in \cite{KiMa02}, \cite{KiMa03} and \cite{KiSh01}
need to be modified, 
taking into account the dilation constant $\la$ from the \p-Poincar\'e
inequality, see Bj\"orn--Marola~\cite[Section~10]{BMarola}.
This is reflected in the constant $\La\ge1$ in 
Theorem~\ref{thm-semiloc-int-reg} in the following way:
Several of the weak Harnack inequalities in \cite{BBbook} contain 
a requirement that $50 \la B \subset \Om$.
(The factor $50$ is not the same in all the 
papers.) 
For a fixed ball $B_0\subset X$,
we let $\CPI$ and $\la$ be the constants
in the \p-Poincar\'e inequality (or $q$-Poincar\'e inequality)
within $50 B_0$, and $C_\mu$ be the doubling constant within $50 \la B_0$.
The weak Harnack inequality then holds for every ball $B \subset B_0$
provided that $50 \la B \subset\Om$ and with a constant
depending only on $\CPI$, $\la$, $C_\mu$ and $p$ (and $q$).

\begin{proof}[Proof of Theorem~\ref{thm-semiloc-int-reg}]
The arguments
in \cite[Section~4 and Theorem~5.1]{KiMa02},
\cite[Section~5]{KiMa03} and \cite{KiSh01} 
are all local, so local assumptions are enough.
They do rely on a better $q$-Poincar\'e inequality but
a suitable version is provided by Theorem~\ref{thm-KZ-proper},
as continuity is a local property.
For the lsc-regularity of quasisuperminimizers also a Lebesgue
point result is needed, which  
is justified by Theorem~\ref{thm-Leb-pt}.

For the 
weak Harnack inequalities 
it is explained above how the semilocal dependence on the constants
is achieved.
\end{proof}

Apart from that the parameters may only be semilocal, the results
in Chapters~7--14 in \cite{BBbook} all hold, with the exception of 
the Liouville theorem, which we look at in Example~\ref{ex-Liouville} below. 
This is because all the other results are of a local or semilocal nature, 
i.e.\ either
in a bounded domain or concerning a local or semilocal property.
In particular, in addition to the interior regularity in 
Theorem~\ref{thm-semiloc-int-reg}, one can prove various convergence
results, minimum and maximum principles, solve the Dirichlet and the
obstacle problem on bounded domains and obtain boundary regularity
and resolutivity for suitable boundary data.

On the other hand, for results of a global nature, such as the
Dirichlet problem
on unbounded domains (as in
Hansevi~\cite{Hansevi1}, \cite{Hansevi2}) or
global singular functions (as in
Holopainen--Shanmugalingam~\cite{HoSh}),
it is far from clear whether they hold under (semi)local assumptions.
Thus, it is precisely when studying ``global'' properties that it
is really interesting to know if the results hold with only local assumptions,
possibly (semi)uniform  ones,
or if perhaps other properties of the space play a vital role.
As an example of a ``global'' result, we will now have a look at the Liouville theorem,
and show that it does not hold in the generality considered here,
  not even under uniformly local assumptions, cf.\ Section~\ref{sect-local-unif}.

\begin{example}   \label{ex-Liouville}
Let $d\mu =w\, dx$  on $\R$, where $\alp \in \R$ and 
\[ 
   w(x)=\begin{cases}
     1, & |x| \le 1, \\
     |x|^\alp, & |x| \ge 1.
     \end{cases}
\] 
The measure $\mu$ is globally 
doubling and  
supports a uniformly local
$1$-Poincar\'e inequality, and thus
a semilocal $1$-Poincar\'e inequality. 
(And a global \p-Poincar\'e inequality 
if and only if $\alp < p-1$.)
A simple calculation shows that a function
$u$ is \p-harmonic on $(\R,\mu)$ if and only if there 
is a constant $c$ such that $u'(x)= c w(x)^{1/(1-p)}$.
If $\alp>p-1$, this gives bounded nonconstant \p-harmonic
functions on $(\R,\mu)$, namely
\[
   u(x)=\begin{cases}
     cx+b, & |x| \le 1, \\
     \displaystyle b + c \biggl( \frac{1}{\beta} +1
          - \frac{1}{\beta|x|^\beta} \biggr) \sgn x, & |x| \ge 1,
     \end{cases}
\quad \text{where } \be=\frac{\al-(p-1)}{p-1}>0
\]
and $b,c \in \R$ are arbitrary constants.
This shows that the Liouville theorem does not hold
under semilocal assumptions, nor under uniformly local ones.
\end{example}

\end{document}